\newcommand{\RR}{{\mathbb{R}}}
\newcommand{\pc}{{\cal P}^{IPF}_k}
\newcommand{\pd}{{\cal P}^{BDF}_k}
\newcommand{\ascp}{{\sc as-gmres-ipf} }
\newcommand{\asbd}{{\sc as-minres-bdf} }
\newcommand{\asbt}{{\sc as-bpcg-bt} }
\newtheorem{example}[theorem]{Example}
\newtheorem{remark}[theorem]{Remark}
\begin{document}     
\bibliographystyle{siam}

\pagestyle{myheadings}
\markboth{M. Porcelli, V. Simoncini and M. Tani}{Preconditioning for constrained control problems}

\title {Preconditioning of Active-Set Newton Methods for 
PDE-constrained Optimal Control Problems%
\thanks{Version of {\color{black}May 22}, 2015. This work was 
 partially supported by {\em National Group of Computing Science (GNCS-INDAM)}}}

\author{Margherita Porcelli$^\dagger$
\and Valeria Simoncini$^\dagger$
\and  Mattia Tani\thanks{%
Universit\`a di Bologna, Dipartimento di Matematica, 
Piazza di Porta S. Donato 5, 40127  Bologna, Italy. Emails: 
{\tt  \{margherita.porcelli,valeria.simoncini,mattia.tani2\}@unibo.it}}}
%
\maketitle
\begin{abstract} 
We address the problem of preconditioning a sequence of
saddle point linear systems arising in the solution
of PDE-constrained optimal control problems via active-set Newton methods, with
control and (regularized) state constraints.
We present two new preconditioners based on a full block matrix factorization of the
Schur complement of the Jacobian matrices, where the active-set blocks are merged into
the constraint blocks. 
We discuss the robustness of the new preconditioners with respect to the
parameters of the continuous and discrete problems.
Numerical experiments on 3D problems are presented, including 
comparisons with existing approaches based on preconditioned conjugate
gradients in a nonstandard inner product. 
\end{abstract}

\begin{keywords}
optimal control problems,  Newton's method, active-set, saddle point matrices.
\end{keywords}

\begin{AMS}
65F50, 15A09, 65K05.
\end{AMS}

\section{The problem} 
In this work we consider the family of PDE-constrained optimization problems of the form 
\begin{equation}
\begin{array}{l}\label{pb_gen}
\displaystyle\min_{{\rm y,u}}  \frac 1 2 \|{\rm y-y_d}\|_{L^2(\Omega)}^2 + \frac{\nu}{2} \|{\rm u}\|_{L^2(\Omega)}^2 \\
 \\
\mbox{ s.t. }  
\left \{ \begin{array}{ll}{\rm -\Delta y - {\beta}\cdot \nabla y  = u} & \mbox{ in }  \Omega \\
	{\rm  y = \bar y} & \mbox{ on }  \partial\Omega \\
	{\rm a }\le  \alpha_u {\rm u} + \alpha_y {\rm y  \le b} & \mbox{ a.e. in } \Omega  ,
\end{array}\right .  
\end{array}\end{equation}
where $\nu\in \RR^+$ is a regularization parameter, ${\rm y_d}$ is a given function representing the desired state, and $\Omega$ is a
domain in $\RR^d$ with $d=2,3$. The state $\rm y$ and the control $\rm u$ are linked via an
elliptic convection-diffusion equation with convection direction $\beta \in \RR^d$. Dirichlet boundary conditions are assumed.
Moreover, we assume the presence of box constraints of the form ${\rm  a \le  \alpha_u u + \alpha_y y  \le b}$ a.e. in $\Omega$,
where we assume ${\rm a(x)<b(x) }$ a.e. in $\Omega$ and  $\alpha_u,\alpha_y$ nonnegative scalars such that $\max\{\alpha_y,\alpha_u\}>0$.
By varying the parameters $\alpha_u,\alpha_y$, we obtain optimal control problems 
with different inequality constraints. In particular, we will consider three specific choices 
which yield well studied problems. 
The first is  $(\alpha_u,\alpha_y) = (1,0)$, that is 
\begin{equation}\label{pb_cc}
{\rm a \le u \le b} \quad  \mbox{ a.e. in } \Omega, 
\end{equation}
which is refereed to as the optimal control problem with {\em Control Constraints} (CC).
The second one is $(\alpha_u,\alpha_y )= (\epsilon,1)$ yielding 
 an optimal control problem with {\em Mixed Constraints} (MC) of the form
\begin{equation}\label{pb_mc}
{\rm a \le  \epsilon u + y  \le b}\quad  \mbox{ a.e. in } \Omega .
\end{equation}
The third choice is $(\alpha_u,\alpha_y )= (0,1)$ 
yielding optimal control problems with {\em State Constraints} (SC)
\begin{equation}\label{pb_sc}
{\rm a \le  y  \le b} \quad  \mbox{ a.e. in } \Omega .
\end{equation}
Mixed Constraints (\ref{pb_mc}) are commonly employed as a form of
regularization of the state-constrained problem,  where 
$\epsilon >0$ represents the regularization parameter \cite{Meyer2007}. Indeed,
pure state constrained problems are more complicated than control constrained ones, 
as in general the Lagrange multiplier associated with state constraints is only a 
measure, and therefore regularized versions with better regularity properties are 
needed {\color{black} to justify the employed solution methods in function
space.} 
{\color{black} 
Examples of typical employed regularizations are the one applied to the state 
constraints as in (\ref{pb_mc}) (see, e.g., \cite{Casas1986}) 
and the one that employs the Moreau-Yosida penalty function \cite{ItoKunish2003,StollPearsonWathen2011}.
We refer to \cite{Meyer2007}
for a discussion of this point, and for the use of primal-dual active
set strategies to deal with the regularized problem. I}n the following we shall see the
purely state-constrained problem as the limit case of the mixed-constrained one. As such, 
it may provide helpful information as a computational reference for the mixed-constrained problem
when $\epsilon$ is very small. 

We follow a {\em discretize-then-optimize} approach for the solution of problem (\ref{pb_gen})
as we first transform the original continuous problem into a standard Quadratic Programming (QP) problem
by a finite difference or finite element discretization, and then we numerically solve the first-order 
conditions of the fully discretized optimization problem. Issues related to the 
commutativity  between the discretize-then-optimize and the optimize-then-discretize approach 
for convection diffusion control equations have been addressed in \cite{PearsonWathen2013}.

Due to the presence of inequality constraints in the problem formulation, 
the optimality system is nonlinear. Moreover, its dimension will be very large as soon
as the desired accuracy requires a fine discretization of the partial differential
equation; the Lagrange multiplier approach also yields a structured (block) nonlinear
equation, thus further expanding the discrete problem size.  We therefore apply a Newton-type approach 
for the nonlinear equation solution,  and we use a Krylov subspace method to solve 
the arising sequence of large and sparse saddle point linear systems.
It is well-known that a computationally effective solution of the linear algebra phase
is crucial for the practical implementation of the Newton-Krylov method \cite{DApuzzo10} and it is widely
recognized that preconditioning is a critical ingredient of the iterative solver.

Existing preconditioners for constrained optimal control problems 
have been tailored for specific elements of
the family (\ref{pb_gen}) and are generally suitable for problems where the
operator characterizing the PDE is self-adjoint. Moreover, implementations based
on the preconditioned conjugate gradient method in a nonstandard inner
product have often been preferred, in spite of possible 
strong limitations \cite{HerzogSachs2010,StollPearsonWathen2011,StollWathen2012}.
The works \cite{StollPearsonWathen2011,StollWathen2012} are for CC problems governed by symmetric PDEs,
while \cite{HerzogSachs2010} considers problems with constraints (\ref{pb_cc})-(\ref{pb_sc}) but
mostly focuses on $\beta=0$. In particular, the problematic numerical behavior of the preconditioners proposed 
in \cite{HerzogSachs2010} for $\beta \neq 0$ motivated this work.

In this paper, we present two new preconditioners aimed at enhancing the solution of 
the linear algebra phase arising from the discretization of the family of optimal control 
problems involving state and/or control constraints of the form (\ref{pb_gen}).
We consider an indefinite preconditioner and a
symmetric and positive definite block diagonal preconditioner.
Both strategies rely on a general factorized approximation of the Schur complement,
and embed newly formed information of the nonlinear iteration, so that
they dynamically change as the nonlinear iteration proceeds. 
The proposed preconditioners are very versatile, as they allow to handle mixed constraints as well as 
the corresponding limit cases, that is control and state constraints. 
In particular, we derive optimality and robustness theoretical properties for the spectrum of the
preconditioned matrices, which hold for a relevant class of problem parameters; numerical
experiments support this optimality also in terms of CPU time.  
A broad range of numerical experiments on three test problems is reported, for a large selection  of the
four problem parameters ($\nu, \beta, \epsilon$ and the spatial mesh size $h$), 
indicating  only a mild sensitivity of the preconditioner with respect
to these values, especially when compared with existing approaches (for the parameters for
which these latter strategies are defined). In addition, in most cases the  indefinite preconditioner
outperforms by at least 50\% the block diagonal preconditioner, for the same Schur complement approximation.
%

The outline of the paper is as follows. Sections \ref{optpb} and \ref{semiNew} describe the
discrete problem and its formal numerical solution by an active-set Newton method. 
Section~\ref{sec:overview} reviews the preconditioning strategies that have been devised to solve (\ref{pb_gen}) for
some choices of the selected parameters. In Section \ref{sec:as_schur} a new general
approximation to the Schur complement is introduced and theoretically analyzed, while
its impact on the new global preconditioners is investigated in Section \ref{sec:prec}.
Section \ref{num_exp}  is devoted to a wide range of numerical results. In particular,
in Section \ref{sec:algo} we discuss some algorithmic details, while in Section \ref{sec:expes} we
report on our numerical experiments on three model problems.
Section \ref{conclusions} summarizes our conclusions.

The following notation will be used throughout the paper. For a given square matrix $A$, ${\rm spec}(A)$ denotes
the set of its eigenvalues.
The Euclidean norm for vectors and the induced norm for matrices is used;
$x^T$ denotes the transpose of the vector $x$.

\section{Description of the problem}

\subsection{The discrete optimization problem} \label{optpb}

Let $M$ represent the lumped mass matrices in an appropriate finite element space, and
$L$ be the discretization of the differential operator 
${\cal L}(\rm y) = -\Delta {\rm y} + \beta\cdot \nabla {\rm y}$; in particular,
$L$ is a nonsymmetric matrix of the form $L = K + C$, where $K$ is the symmetric and positive
definite discretization of the (negative)  Laplacian operator and $C$ is the  ``convection'' matrix. 
In the following we shall assume that $L+L^T \succeq 0$
\footnote{This requirement is satisfied when, for instance, upwind finite differences
over a regular grid, or upwind-type finite elements are used, with Dirichlet boundary conditions; see, e.g.,
\cite[Chapter 3]{Elman2005}, \cite{Golub1996}.}
Moreover, let $n_h$ be the dimension of the discretized space depending on the mesh size $h$ 
and let $ y,u, a, b\in\RR^{n_h}$ be the coefficients of
${\rm y,u, a, b}$ in the chosen finite element space basis.
Then, the discretization of problem (\ref{pb_gen}) is given by the following QP problem

\begin{equation}\label{dpb_gen}
\begin{array}{l}
\displaystyle \min_{y,u}  Q(u,y)=\frac 1 2 (y-y_d)^T M(y-y_d) + \frac{\nu}{2} u^TM u  \\
 \\
\mbox{ s.t. }  
\left \{ \begin{array}{l}
        Ly  = M u-d  \\
        a \le  \alpha_u u + \alpha_y y  \le b
\end{array}\right .  
\end{array}
\end{equation}
where $d$ represents the boundary data.
The Lagrangian function for problem (\ref{dpb_gen}) is given by
$$
{\mathcal{L}}(u,y,p,\mu) = Q(u,y) + (Ly-Mu+d)^Tp + (\alpha_u u + \alpha_y y -b)^T\mu_b +
(\alpha_u u + \alpha_y y -a)^T \mu_a
$$
where $p$ is the Lagrange multiplier associated with the linear equality constraint
and $\mu_a, \mu_b$ are the Lagrange multipliers associated with the lower and upper bound constraints.
The corresponding Karush-Kuhn-Tucker conditions are
\begin{equation}\label{kkt}
\begin{array}{l}
\nabla_y {\mathcal{L}} = M (y-y_d) + L^T p+ \alpha_y (\mu_b+\mu_a)  =0 \\
\nabla_u {\mathcal{L}} =\nu M u - M p + \alpha_u (\mu_b+\mu_a) =0 \\
L y  - M u +d = 0 \\
\mu_b \ge 0,\ \ \alpha_u u + \alpha_y y \le b, \ \ \mu_b^T(\alpha_u u + \alpha_y y -b) =0  \\
\mu_a \le 0,\ \ a \le \alpha_u u + \alpha_y y, \ \ \mu_a^T(a-\alpha_u u - \alpha_y y )=0  
\end{array}
\end{equation}
%
Setting $\mu = (\mu_b+\mu_a)$, the complementarity conditions in (\ref{kkt}) can be equivalently stated as the following nonlinear system 
$$C(u,y,\mu) =0$$
with $C$ the following complementary function
\begin{equation}\label{nlkkt}
C(u,y,\mu) = \mu - \max \{0, \mu + c(\alpha_u u + \alpha_y y - b)\} - \min \{0, \mu + c(\alpha_u u + \alpha_y y -a )\}, 
\end{equation}
with $c>0$.
Therefore,  the KKT system (\ref{kkt}) can be  reformulated as the following  nonlinear system
\begin{equation}\label{NL}
F(y,u,p,\mu) =\begin{bmatrix}
M (y-y_d) + L^T p  + \alpha_y \mu \\
\nu M u - M p + \alpha_u \mu  \\
L y - M u +d\\        
C(u,y,\mu)  \end{bmatrix}=0
\end{equation}
with $F:\RR^{4n_h}\rightarrow \RR^{4n_h}$, $y,u,p,\mu \in\RR^{n_h}$. 

\subsection{The active-set Newton method} \label{semiNew}
In the following we  recall a possible derivation of an active-set Newton type method for the solution
of the KKT nonlinear system (\ref{NL}) following the description made in \cite{Hintermuller2002}
where  nonsmooth analysis was used.

Let us define the sets of active and inactive  indices at the (discrete) optimal solution $(u^*,y^*)$
\begin{equation} \label{setAI}
{\mathcal{A}}_*  = {\mathcal{A}^b_*} \cup {\mathcal{A}_*^a}\ \mbox{ and }
\ {\mathcal{I}}_*  =  \{1,\dots,n_h\} \setminus  {\mathcal{A}_*},  
\end{equation}
where ${\mathcal{A}_*^b},{\mathcal{A}_*^a}$ are the set 
\begin{eqnarray*}
{\mathcal{A}_*^b} = \{ i \ | \ \mu_i^*+c (\alpha_u u^*_i + \alpha_y y^*_i-b_i)>0 \}, \quad\, 
{\mathcal{A}_*^a}  = \{ i \ | \  \mu_i^*+c(\alpha_u u^*_i + \alpha_y y^*_i - a_i)<0  \}.
\end{eqnarray*}
The  nonlinearity and nonsmoothness of the function $F$ in (\ref{NL}) are clearly gathered in the last block containing the complementarity function 
$C(u,y,\mu)$ defined in (\ref{nlkkt}).  
Hinterm\"uller et al. showed in \cite{Hintermuller2002} that the
functions $ v \rightarrow \min\{ 0,v\}$
and $ v \rightarrow \max\{ 0,v\}$
from $\RR^n \rightarrow \RR^n$ are slantly differentiable 
with slanting functions given by the diagonal matrices  $G_{\min}(v)$ and $G_{\max}(v)$
 with diagonal elements 
$$
G_{\min}(v)_{ii} = \left \{ \begin{array}{ll}
1 & \mbox{ if } v_i < 0 \\
0 & \mbox{ else }
\end{array}
\right ., \quad G_{\max}(v)_{ii} = \left \{ \begin{array}{ll}
1 & \mbox{ if } v_i > 0 \\
0 & \mbox{ else }
\end{array}
\right .$$
The choice of $G_{\min}$ and $G_{\max}$ suggests to use the following element $F'(y^*,u^*,p^*,\mu^*)\in \RR^{4n_h \times 4n_h}$ 
of the generalized Jacobian $\partial F(y^*,u^*,p^*,\mu^*)$ (\cite{Clarke1983})
\begin{equation}\label{jacgen}
F'(y^*,u^*,p^*,\mu^*) = \begin{bmatrix} 
M & 0  & L^T & \alpha_y I \\
0 & \nu M & -M & \alpha_u I  \\
L & - M & 0 & 0 \\        
c\alpha_y \Pi_{\mathcal{A}_*} & c \alpha_u  \Pi_{\mathcal{A}_*} & 0 & \Pi_{\mathcal{I}_*}   \end{bmatrix} ,
\end{equation}
to construct a ``semismooth'' Newton scheme. Here $\Pi_{\mathcal{C}}$ denotes a diagonal binary matrix with nonzero entries in $\mathcal{C}$,
and the sets $\mathcal{A}_*,\mathcal{I}_*$ are given in (\ref{setAI}).

Given the $k$th iterate $(y_k,u_k,p_k,\mu_k)$, let ${\mathcal{A}_k}$ and ${\mathcal{I}_k}$ be the current active and inactive sets 
where 
\begin{subequations} \label{AS-def} 
\begin{eqnarray}
{\mathcal{A}_k} & = & {\mathcal{A}^b_k} \cup {\mathcal{A}^a_k}, \quad 
{\mathcal{I}_k}  =  \{1,\dots,n_h\} \setminus  {\mathcal{A}_k}  \label{setIk}\\
{\mathcal{A}^b_k} & = & \{ i \ | \ ({\mu_k})_i + c(\alpha_u ({u_k})_i + \alpha_y ({y_k})_i -b_i)>0 \}\\
{\mathcal{A}^a_k} & = & \{ i \ | \  ({\mu_k})_i + c(\alpha_u ({u_k})_i + \alpha_y ({y_k})_i-a_i)<0 \} 
\end{eqnarray}
\end{subequations}
and let $n_{{\mathcal{A}}_k} = card({\mathcal{A}_k})$ be the current number of active constraints.
Using the Jacobian $F'$ in (\ref{jacgen}), the semismooth Newton iteration \cite{Hintermuller2002} applied to system (\ref{NL}) is the following:
$$
\begin{bmatrix}  
M & 0  & L^T & \alpha_y I \\
0 & \nu M & -M & \alpha_u I  \\
L & - M & 0 & 0 \\        
c \alpha_y \Pi_{\mathcal{A}_k} & c \alpha_u \Pi_{\mathcal{A}_k} & 0 & \Pi_{\mathcal{I}_k}
\end{bmatrix}
\begin{bmatrix} 
y_{k+1} \\
u_{k+1}  \\
p_{k+1} \\        
\mu_{k+1}
\end{bmatrix} =		
\begin{bmatrix} 
M y_d \\
0  \\
d \\        
c(\Pi_{\mathcal{A}_k^b} b+\Pi_{\mathcal{A}_k^a} a)
\end{bmatrix}.
$$
Setting ${(\mu_{k+1}})_{\mathcal{I}_k} = 0$ (the multiplier associated with the inactive
inequality constraints) and eliminating this variable, we obtain the sequence of Newton structured equations
\begin{equation}\label{new_eq}
J_k x_{k+1}  = f_k, \quad k = 1,2, \dots
\end{equation}
where $x_{k+1} = (y_{k+1},u_{k+1},p_{k+1},(\mu_{k+1})_{\mathcal{A}_k}) \in\RR^{3 n_h + n_{{\mathcal{A}}_k}}$, 
\begin{equation}\label{JK}
f_k = \begin{bmatrix} 
M y_d \\
0  \\
d \\        
P_{\mathcal{A}_k^b} b+P_{\mathcal{A}_k^a} a
\end{bmatrix}  ,\quad
  J_k =
\begin{bmatrix} 
M & 0  & L^T & \alpha_y P^T_{\mathcal{A}_k}\\
0 & \nu M & -M & \alpha_u P^T_{\mathcal{A}_k}  \\
L & - M & 0 & 0 \\        
\alpha_y P_{\mathcal{A}_k} & \alpha_u P_{\mathcal{A}_k} & 0 & 0
\end{bmatrix} , 
\end{equation}
where
$P_{\mathcal{C}}$ is a rectangular matrix consisting of those rows of $\Pi_{\mathcal{C}}$ which belong to the
indices in ${\mathcal{C}}$; with this notation, {$\Pi_{\cal C} = P_{\cal C}^T P_{\cal C}$}.
We remark that the value of $c$ has no influence on the solution of the Newton equation (\ref{new_eq})
but  affects the updating of the active sets ${\mathcal{A}_k}$ in (\ref{AS-def}).

The above semismooth Newton scheme was proved to be equivalent to 
the Primal-Dual active-set method for solving constrained optimal control problems in \cite{Hintermuller2002}
and this equivalence allowed to establish superlinear local and also global convergence results 
\cite{Hintermuller2002,Meyer2007,Kunisch2002}.
In fact, the active-set strategy works as a prediction technique in the sense that
it is proved that if $(u_k,y_k,p_k,\mu_k)\rightarrow (u^*,y^*,p^*,\mu^*)$, then there exists an index $\bar k$
such that ${\mathcal{A}_{\bar k}} = {\mathcal{A}_*}$ and  ${\mathcal{I}_{\bar k}} = {\mathcal{I}_*}$ \cite[Remark 3.4]{Hintermuller2002}.

Given $x_k$, the next  iterate $x_{k+1}$ is commonly computed by applying 
an iterative solver (in our case a preconditioned Krylov subspace method)
to the Newton equation (\ref{new_eq}),
and then generating a sequence of (inner) iterations $\{x_{k+1}^j\}_{j\ge0}$.
The inner iteration is started with $x^0_{k+1}=x_k$ and  stopped for $j_*>0$ such that  
\begin{equation}\label{crit}
\|J_{k}x_{k+1}^{j_*}-f_{k}\|\le \eta_k \|J_{k}x^0_{k+1}-f_{k}\| 
\end{equation}
and the next iterate $x_{k+1}$ is set equal to $x_{k+1}^{j_*}$. The 
scalar $\eta_k>0$ controls the accuracy in the solution of the unpreconditioned
linear system.
The choice $\eta_k=\eta_k^E$ with 
\begin{eqnarray}\label{ex_crit}
 \eta^{E}_k =  \tau_1, 
\end{eqnarray}
$k \ge 1$, with a small $\tau_1$ (e.g. $\tau_1=10^{-10}$)
allows us to compare various preconditioning techniques in solving
the linear system (\ref{new_eq}), while the nonlinear iteration remains substantially unaffected by the
use of each different inner strategy. This stopping criterion  was used in all 
our numerical experiments of Sections \ref{exp_comp} and \ref{exp_new}.

Occasionally, for some choice of problem parameters 
we have experienced slow convergence of the Newton method 
in the solution of CC problems. This prompted us to
also consider the adaptive choice $\eta_k=\eta_k^I$
\begin{eqnarray}\label{in_crit}
\eta^I_0=\tau_2,\quad \eta^I_{k}= \min \{\eta^I_{k-1}, \tau_3\|F(u_k,y_k,p_k,\mu_k)\|^2 \},
\end{eqnarray}
$k \ge 1$ (e.g. with $\tau_2 = 10^{-4}, \tau_3 = 10^{-2}$),
which gives rise to the ``inexact'' solution of the Newton system \cite{ew2,Kanzow2004,Porcelli13}. In particular,
(\ref{in_crit}) is intended
to give the desirably fast local convergence near a solution and, at the same time, to minimize
the occurrence of problem oversolving. 
We remark that the global convergence of the active set Newton method is no longer guaranteed 
if inexact steps are computed, but it is anyway expected for small values of the initial
forcing term $\eta^I_0$ \cite{Kanzow2004}.
Numerical tests with (\ref{in_crit}) are reported in Section \ref{exp_inex}.

The key step in the overall process is the efficient iterative solution of the
linear systems (\ref{new_eq}), for which preconditioning is mandatory.
The rest of the paper is thus devoted to the analysis of effective
preconditioning strategies. 

\section{Overview of the current approaches}\label{sec:overview}
In this section we review some of the preconditioning strategies that have been
explored in the literature for the solution 
of CC, MC and SC problems. In particular we consider the proposals \cite{HerzogSachs2010,StollWathen2012}
both based on the use of 
the Preconditioned Conjugate Gradient  method \cite{BramblePasciak1988,SchoberlZulehner2007} 
with a nonstandard inner product 
for the solution of the saddle point linear systems arising in
the active-set Newton method for solving (\ref{NL}).
This approach (from now on named {\sc bpcg})  was originally used in the context of saddle point linear systems 
for mixed approximations of elliptic problems 
by Bramble and Pasciak in \cite{BramblePasciak1988}, and then subsequently used
in different settings where similar linear systems arise; 
see, e.g.,  \cite{HerzogSachs2010,SchoberlZulehner2007} in our context. 
%
%
Herzog and Sachs in \cite{HerzogSachs2010} consider the solution of CC, MC and SC problems by
partitioning the Jacobian matrix $J_k$ as follows 
\begin{eqnarray}\label{eqn:J_part}
J_k = \left [\begin{array}{c c | c c}
M & 0  & L^T & \alpha_y P^T_{\mathcal{A}_k}\\
0 & \nu M & -M & \alpha_u P^T_{\mathcal{A}_k}  \\
\hline
L & - M & 0 & 0 \\        
\alpha_y P_{\mathcal{A}_k} & \alpha_u P_{\mathcal{A}_k} & 0 & 0
  \end{array}  \right ] =
\begin{bmatrix} A & B_k^T \\ B_k & 0 \end{bmatrix} ,
\end{eqnarray}
and therefore considering $(\alpha_y,\alpha_u)=(0,1)$ in the CC case, 
$(\alpha_y,\alpha_u)=(1,\epsilon)$ in the MC case and $(\alpha_y,\alpha_u)=(1,0)$ in the SC case. 
Following the approach presented in  \cite{SchoberlZulehner2007}, Herzog and Sachs 
proposed the preconditioner 
\begin{eqnarray}\label{prec_HS}
 {\cal P}^{\sigma,\tau}_k = 
\begin{bmatrix} I & 0 \\ B_k \widehat A(\sigma)^{-1} & I \end{bmatrix}
\begin{bmatrix} \widehat A(\sigma) & B_k^T \\ 0  & -\widehat S_k(\sigma,\tau) \end{bmatrix} ,
\end{eqnarray}
where $\widehat A(\sigma)$ and $\widehat S_k(\sigma,\tau)$
approximate the $(1,1)$ block $A$ and the Schur complement $S_k$ respectively, and are block 
diagonal matrices, while the scalars $\sigma$ and $\tau$ are suitably chosen positive scalars,
whose role will be made clear below. 
%
A feature of this approach is that 
the blocks of $ \widehat A(\sigma)$ and $\widehat S_k(\sigma,\tau)$ can be
 chosen as (approximations of) the inner product matrices of the spaces where the continuous 
unknowns $\rm (y,u)$ and $\rm (p,\mu)$ are sought.
%
In particular,
$$
\widehat A(\sigma) = \frac{1}{\sigma}\begin{bmatrix} K &  0 \\
                      0 & M
                     \end{bmatrix} \quad \mbox{ and }\quad  \widehat S_k(\sigma,\tau) =
                     \frac{\sigma}{\tau}\begin{bmatrix}
                                         K & 0 \\
                                         0 & P_{\mathcal{A}_k} M^{-1} P_{\mathcal{A}_k}^T 
                                        \end{bmatrix} ,
$$
where $K$ is associated with the scalar product in the discretized state space,
and the scaling parameters 
$\sigma$ and $\tau$ are associated with the bilinear forms underlying the considered problems
(note that $K=L$ if $\beta=0$). 

The role of the scalars $\sigma$ and $\tau$ is crucial
since they have to ensure that $\widehat{A}(\sigma) > A $ and $B_k \widehat{A}(\sigma)^{-1} B_k^T  > \widehat{S}_k(\sigma,\tau)$,
so that the preconditioned matrix
 $(\mathcal{P}_k^{\sigma,\tau})^{-1} J_k$ is positive definite with respect to the inner product defined by
$$ 
\mathcal{D}_k^{\sigma,\tau} = \mathcal{P}_k^{\sigma,\tau} - J_k = 
\begin{bmatrix} 
\widehat{A}(\sigma) - A & 0 \\ 0 & B_k \widehat{A}(\sigma)^{-1} B_k^T - \widehat{S}_k(\sigma,\tau) 
\end{bmatrix} .
$$
Under these conditions, the CG method in this non-Euclidean inner product can be used.

The spectral analysis provided in \cite[Corollary 2.3]{HerzogSachs2010}  for $L$ symmetric ($\beta=0$)
shows that the eigenvalues of $ (\mathcal{P}_k^{\sigma,\tau})^{-1} J_k $ are 
bounded independently of $h$, while they depend on $\nu$ in such a way that
the condition number of the preconditioned matrix is proportional to $1/\nu$; 
As a consequence, poor convergence of {\sc bpcg} for small values of $\nu$ is predicted, and
also verified experimentally.  Moreover,  the authors show that the (preconditioned) condition 
number in MC problems scales like $\epsilon^{-2}$ for small $\epsilon$,
making the use of the proposed preconditioner prohibitive for values
of $\epsilon$ smaller than $10^{-3}$. 
Regarding the analysis for problems with  $\beta=(\beta_1,0,0)$, $\beta_{1}>0$,
a deterioration of the convergence behavior for large values
of $\beta_{1}$ was theoretically analyzed for CC problems and confirmed in the few 
reported experiments. The difficulties in solving these problems are illustrated in the plots of 
Figure~\ref{herzog} which are in complete agreement with \cite[Figure 4]{HerzogSachs2010}, and
were obtained with the same codes\footnote{We thank Roland Herzog for providing us with all Matlab codes
used in \cite{HerzogSachs2010}.}, though on a different machine.
In particular, we emphasize the strong 
dependence on $\beta$ and $h$ of the preconditioned strategy.

 \begin{figure}[htb]
  \centering
      \includegraphics[width=2.5in,height=2.2in]{./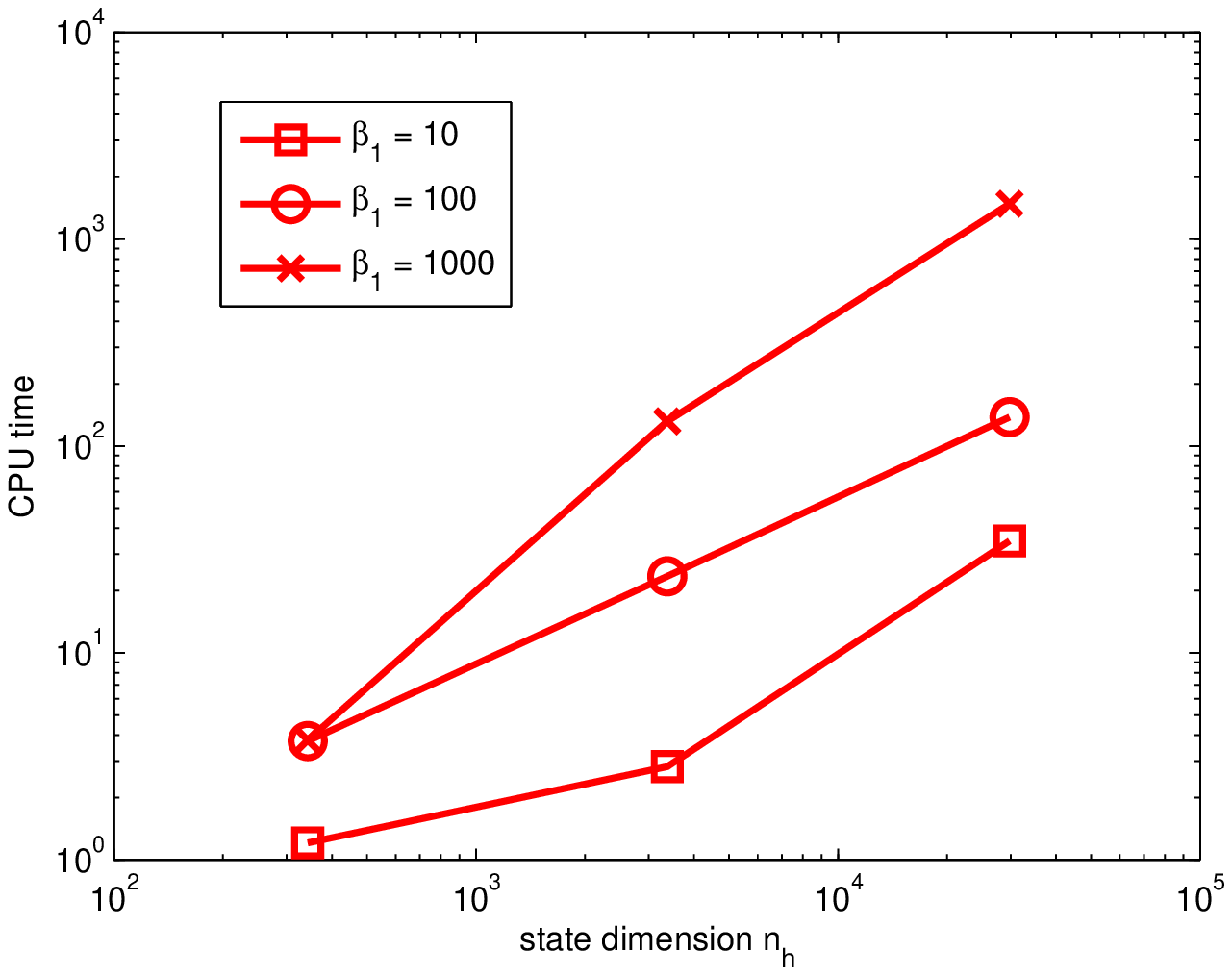}
    \includegraphics[width=2.5in,height=2.2in]{./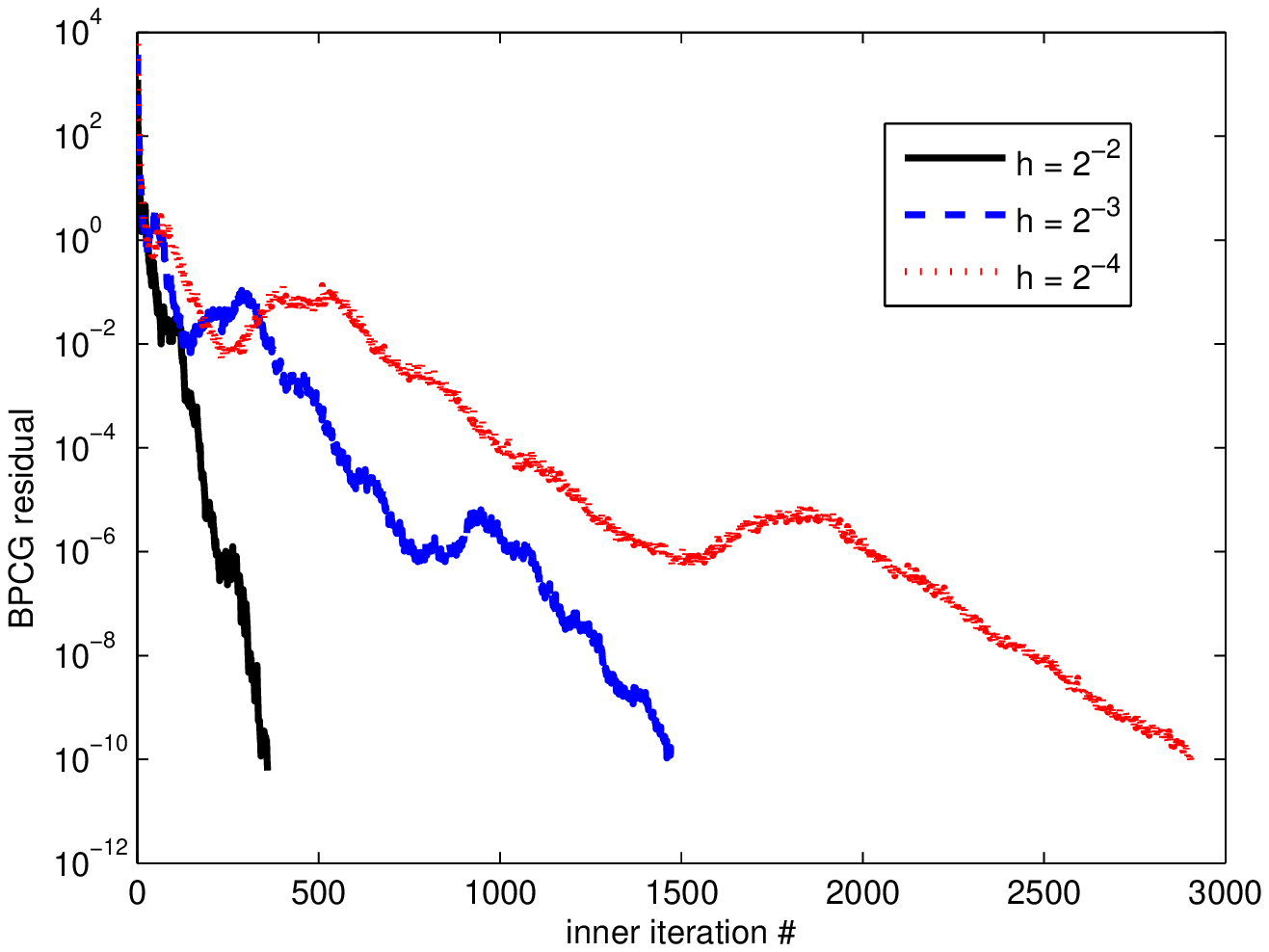}
\caption{Unconstrained problem with convection (problem {\tt CC-pb1} described
in Section \ref{num_exp}).
Left:  CPU time for a single Newton step vs. the 
discretized state space dimension, for $\beta=(\beta_1,0,0)$ with $\beta_1 = 10,100,1000$. 
Right: {\sc bpcg} residual convergence history for various grid
levels ($\beta_1=1000$). 
\label{herzog}}
\end{figure}

In \cite{StollWathen2012} CC problems with a self-adjoint 
and positive definite elliptic operator as constraint is considered.
Differently from \cite{HerzogSachs2010}, at each nonlinear
iteration a saddle point system is obtained by eliminating $(\mu_{k+1})_{\mathcal{A}_k}$
from the system (\ref{new_eq}) and therefore solving 
a system of reduced dimensions with the following coefficient matrix
$$
J_{k,red} =\begin{bmatrix}
M & 0 & -L^T \\ 0 & \nu M_{{\cal A}_k, {\cal A}_k} & M_{{\cal A}_k, :} \\
- L & M_{:,{\cal A}_k} & 0
\end{bmatrix} ,
$$
where $M_{{\cal C}_r, {\cal C}_c}$ is the submatrix of $M$ obtained by taking the rows whose 
indices belong to the set ${\cal C}_r$ and the columns whose indices belong 
to the set ${\cal C}_c$. Here, `:' denotes the set of all indices $1,\ldots,n_h$.
However, the authors of \cite{StollWathen2012} preferred to work with the full $3\times3$ block system, 
\begin{equation}\label{JF}
J_F := \begin{bmatrix}
M & 0 & -L^T \\ 0 & \nu M & M \\
- L & M & 0 
\end{bmatrix} ,
\end{equation}
which they
considered to be more practical to handle within 
the semismooth Newton method, than a system whose full dimension depends
on the number of indices in the active sets.
To solve these complete systems, the following block triangular
preconditioner ${\cal P}^{BT}$ and inner product  matrix ${\cal H}$ are introduced
in \cite{StollWathen2012}:
\begin{equation}\label{BT}
{\cal P}^{BT} =
\begin{bmatrix}
A_0 & 0 & \cdot \\ 0 & A_1 & 0 \\ -L & M & - S_0
\end{bmatrix} ,
\qquad
{\cal H} =
\begin{bmatrix}
M-A_0 & 0 & 0 \\ 0 & \nu M - A_1 & 0 \\ 0 & 0 &  S_0
\end{bmatrix} ,
\end{equation}
where $A_0$ and $A_1$ are appropriate approximations of $M$ and $\nu M$, respectively, so that the
matrix $\cal H$ is positive definite; moreover,
$S_0 =  L M^{-1} L$ approximates the following true
Schur complement of $J_F$:
\begin{equation}\label{SF}
S_F = L M^{-1} L + \nu^{-1} M . 
\end{equation}
Note that the preconditioner 
${\cal P}^{BT}$ does not depend on the nonlinear iteration $k$, and therefore on the current active set.

As in the previous approach, the preconditioned system $\left({\cal P}^{BT}\right)^{-1} J_F$  
is symmetric and positive definite with respect to the inner product
associated with ${\cal H}$ and a CG method can be applied. 
In Section \ref{num_exp} we will report on the performance of the preconditioners
${\cal P}^{BT}$, compared with our new preconditioners.

In our analysis, we found the work \cite{PearsonWathen2012} particularly inspiring, although
a simplified setting was considered: a positive definite self-adjoint
elliptic operator in the equality constraint, and no bound-constraints.
Under these hypotheses, the KKT conditions give a saddle
point system with the coefficient matrix $J_F$ in (\ref{JF}).
In \cite{PearsonWathen2012} 
the following factorized approximation to the Schur complement $S_F$ in (\ref{SF}) is introduced
(the scaling factor $\frac 1 \nu$ is omitted):
\begin{equation}\label{schur-wath}
 \widehat S_F =  (\sqrt{\nu}L+ M) M^{-1} (\sqrt{\nu}L+ M) ,
\end{equation}
which appears to possess nice independence properties 
with respect to the problem parameters:
the eigenvalues of $\widehat S_F^{-1} S_F$ lie in the interval 
$\left[ \frac{1}{2},1\right ]$ independently of the values of $h$ and $\nu$
 \cite[Theorem 4]{PearsonWathen2012}.
In the following we shall broadly generalize this idea so as to cover our more complete 
framework. Optimality results will also be discussed.

The Schur complement approximation (\ref{schur-wath}) was also used in
\cite{PearsonWathen2013} in the solution of convection-diffusion (equality constrained) control problems 
where the authors generalized  the above mentioned spectral properties of $\widehat S_F^{-1} S_F$ to
the case where $L$ is nonsymmetric.

\section{A new approximation to the active-set Schur complement} \label{sec:as_schur}

In agreement with other commonly employed preconditioning strategies, the
preconditioners we are going to present in Section~\ref{sec:prec}  strongly
rely on the quality of the used approximation to the Schur complement 
of the coefficient matrix $J_k$.
In this section we introduce this approximation and analyze its spectral
properties.
%
In the following we shall make great use of the fact that $M$ is a lumped mass matrix,
and thus diagonal. This way, $M$ and $\Pi_{\mathcal {A}_k}$ can commute and formulas simplify
considerably.
To simplify the notation, we shall use the short-hand notation $\Pi_k=\Pi_{\mathcal {A}_k}$.
 
Using the same partitioning as in (\ref{eqn:J_part}),
the {\em active-set Schur complement} associated with $J_k$, and its block factorization
are given by
\begin{eqnarray*} 
S_k  =  B_k A^{-1} B_k^T 
&=& \frac{1}{\nu}\begin{bmatrix}
\nu L M^{-1} L^T + M & (\alpha_y \nu L M^{-1}-\alpha_u I) P_{\mathcal{A}_k}^T  \\
 P_{\mathcal{A}_k}(\alpha_y \nu M^{-1}L^T-\alpha_u I)  &
(\alpha_y^2\nu + \alpha_u^2) P_{\mathcal{A}_k} M^{-1} P_{\mathcal{A}_k}^T 
\end{bmatrix} \\
&=& \frac 1 \nu R_k 
 \begin{bmatrix}
{\mathbb S}_k  & 0 \\ 0 & (\alpha_y^2\nu + \alpha_u^2)  
P_{\mathcal{A}_k} M^{-1} P_{\mathcal{A}_k}^T 
\end{bmatrix}  
  R_k^T ,
\end{eqnarray*} 
with
\begin{equation}\label{Rk}
 R_k = \begin{bmatrix} I & \frac{1}{\alpha_y^2\nu + \alpha_u^2}(\alpha_y \nu L M^{-1} -\alpha_u I)\Pi_k M P_{\mathcal{A}_k}^T \\ 0 & I 
\end{bmatrix},
\end{equation}
and
\begin{equation*}
 {\mathbb S}_k  =   \nu L M^{-1} L^T + M - \frac{1}{\alpha_y^2\nu+\alpha_u^2} 
  (\alpha_y \nu L M^{-1}-\alpha_u I) \Pi_k M \Pi_k (\alpha_y\nu L M^{-1}-\alpha_u I)^T.
\end{equation*}
We define the following {\em factorized approximation} of ${\mathbb S}_k$:
\begin{equation}\label{eqn:shat_gen}
\widehat {\mathbb S}_k : = L_1 M^{-1} L_1^T, \qquad
{\rm with } \quad 
L_1 = \sqrt{\nu}L \left(I-\gamma_1\Pi_k\right)^{\frac 1 2} +
\left(I-\gamma_2\Pi_k \right)^{\frac 1 2} M,
\end{equation}
and 
\begin{equation}\label{gamma12_def}\gamma_1=\frac{\alpha_y^2 \nu}{\alpha_y^2\nu+\alpha_u^2}, \qquad
\gamma_2=\frac{\alpha_u^2}{\alpha_y^2\nu+\alpha_u^2}.
\end{equation}
Note that $\gamma_1 + \gamma_2 = 1$, which implies 
\begin{equation}\label{gamma12}
 (I - \gamma_1 \Pi_k)^{\frac 1 2}(I-\gamma_2 \Pi_k)^{\frac 1 2} 
= \sqrt{\gamma_1\gamma_2}\ \Pi_k +(I- \Pi_k),
\end{equation}
a property that will be used in the sequel.
 Moreover both (diagonal) matrices under square root have strictly 
 positive diagonal elements for $\gamma_1,\gamma_2 \neq 1$, i.e., for  $\alpha_u\neq 0$
 and $\alpha_y\neq 0$, respectively. If $\gamma_1=1$ (or $\gamma_2=1$),
 then  $(I - \gamma_1 \Pi_k)^{\frac 1 2}$ (or $(I - \gamma_2 \Pi_k)^{\frac 1 2}$) reduces to  $(I - \Pi_k)$.
 
\vskip 0.05in
\begin{remark}
{\rm 
Our approach uses the fact that $M$ is diagonal, both from a
computational and a theoretical point of view.
If the employed discretization is such that $M$ is no longer diagonal, 
then we could define the preconditioner with {\rm diag}($M$) in place of $M$.
As an alternative, we could keep $M$ in the preconditioner, and solve systems
with $P_{{\cal A}_k}M^{-1} P_{{\cal A}_k}^T$ as discussed in 
\cite[(3.10)]{HerzogSachs2010}, and possibly
also approximate the action of $M^{-1}$ by a Chebyshev polynomial \cite{Wathen2008}.
For the sake of simplicity we refrain from further exploring these possibilities.
}
\end{remark}

\vskip 0.05in

We proceed with an analysis of the quality of the proposed Schur complement
preconditioner.

\begin{proposition}\label{prop:hatS=SpG_gen}
Let ${\mathbb S}_k$ and $\widehat {\mathbb S}_k$ be as defined above. Then
$$
 \widehat{\mathbb S}_k  =  {\mathbb S}_k + \sqrt{\nu}( L (I- \Pi_k) + (I-\Pi_k) L^T) .
$$
\end{proposition}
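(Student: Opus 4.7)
The plan is a direct computation: expand both $\widehat{\mathbb S}_k$ and $\mathbb S_k$ in closed form using the diagonality of $M$ (and hence of $\Pi_k$, $M$, and every $(I-\gamma_i\Pi_k)^{1/2}$, all of which commute pairwise), then compare term-by-term.

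First I would expand $\widehat{\mathbb S}_k = L_1 M^{-1} L_1^T$ by distributing the two summands of $L_1$. The diagonal blocks simplify immediately: $\sqrt{\nu}L(I-\gamma_1\Pi_k)^{1/2}\,M^{-1}\,(I-\gamma_1\Pi_k)^{1/2}\sqrt{\nu}L^T = \nu L(I-\gamma_1\Pi_k)M^{-1}L^T$, and the ``mass'' block becomes $(I-\gamma_2\Pi_k)M$. For the two cross terms I would invoke identity (\ref{gamma12}), so that each cross term reads $\sqrt{\nu}\bigl(\sqrt{\gamma_1\gamma_2}\,\Pi_k + (I-\Pi_k)\bigr)$ sandwiched appropriately with $L$ or $L^T$. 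Thus
\[
\widehat{\mathbb S}_k = \nu L(I-\gamma_1\Pi_k)M^{-1}L^T + (I-\gamma_2\Pi_k)M + \sqrt{\nu}\sqrt{\gamma_1\gamma_2}\bigl(L\Pi_k + \Pi_k L^T\bigr) + \sqrt{\nu}\bigl(L(I-\Pi_k) + (I-\Pi_k)L^T\bigr).
\]

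Next I would massage $\mathbb S_k$ into the same form. Using $\Pi_k^2=\Pi_k$ and that $M,\Pi_k$ commute, the middle correction collapses to $(\alpha_y\nu LM^{-1}-\alpha_u I)M\Pi_k(\alpha_y\nu M^{-1}L^T-\alpha_u I)$. Expanding the product and simplifying $M\Pi_k M^{-1}=\Pi_k$ gives four terms:
\[
\alpha_y^2\nu^2\, L\Pi_k M^{-1}L^T - \alpha_y\alpha_u\nu\bigl(L\Pi_k+\Pi_k L^T\bigr) + \alpha_u^2\, M\Pi_k.
\]
Dividing by $\alpha_y^2\nu+\alpha_u^2$ and reading off the coefficients via the definitions \eqref{gamma12_def}, one recognizes $\nu\gamma_1$, $\sqrt{\nu}\sqrt{\gamma_1\gamma_2}$, and $\gamma_2$ respectively. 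Hence
\[
\mathbb S_k = \nu L(I-\gamma_1\Pi_k)M^{-1}L^T + (I-\gamma_2\Pi_k)M + \sqrt{\nu}\sqrt{\gamma_1\gamma_2}\bigl(L\Pi_k + \Pi_k L^T\bigr).
\]

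Subtracting the two displayed expressions yields exactly $\widehat{\mathbb S}_k - \mathbb S_k = \sqrt{\nu}\bigl(L(I-\Pi_k) + (I-\Pi_k)L^T\bigr)$, as claimed. The only real obstacle is the bookkeeping in matching the three numerical coefficients $\nu\gamma_1$, $\gamma_2$, and $\sqrt{\nu\gamma_1\gamma_2}$ to the three rational expressions arising from $\mathbb S_k$; once this dictionary is set up via \eqref{gamma12_def}, the cancellation is automatic and the remaining $\sqrt{\nu}\bigl(L(I-\Pi_k)+(I-\Pi_k)L^T\bigr)$ term, which is exactly the ``mixing'' piece produced in $\widehat{\mathbb S}_k$ by the $(I-\Pi_k)$ summand of identity \eqref{gamma12}, is precisely what is missing from $\mathbb S_k$.
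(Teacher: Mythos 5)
Your proof is correct and follows essentially the same route as the paper: both expand $\mathbb S_k$ and $\widehat{\mathbb S}_k$ into the common form $\nu L(I-\gamma_1\Pi_k)M^{-1}L^T + (I-\gamma_2\Pi_k)M + \sqrt{\nu}\sqrt{\gamma_1\gamma_2}\,(L\Pi_k+\Pi_k L^T) + (\cdots)$ using the diagonality of $M$ and $\Pi_k$ and identity (\ref{gamma12}), and then read off the difference. The coefficient matching ($\nu\gamma_1$, $\gamma_2$, $\sqrt{\nu\gamma_1\gamma_2}$) is carried out correctly, so nothing is missing.
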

\begin{proof}
The result follows from
\begin{eqnarray*}
 {\mathbb S}_k  & = & \nu  L M^{-1} L^T + M - \frac{1}{\alpha_y^2\nu+\alpha_u^2} (\alpha_u^2 \Pi_k M +  \alpha_y^2 \nu^2 L \Pi_k M^{-1}L^T  - \alpha_y \alpha_u \nu ( \Pi_k L^T + L \Pi_k ) ) \\
 & = & \nu  L (I-\gamma_1\Pi_k) M^{-1} L^T + (I-\gamma_2\Pi_k) M +
 \sqrt{\nu} (L \sqrt{\gamma_1 \gamma_2}\ \Pi_k + \sqrt{\gamma_1 \gamma_2}\ \Pi_k L^T),
\end{eqnarray*}
and 
\begin{eqnarray*}
\widehat{\mathbb S}_k & = & (\sqrt{\nu}L \left(I-\gamma_1 \Pi_k\right)^{\frac 1 2} + \left(I-\gamma_2  \Pi_k\right)^{\frac 1 2} M) M^{-1} 
(\sqrt{\nu}L \left(I-\gamma_1 \Pi_k\right)^{\frac 1 2} + \left(I-\gamma_2 \Pi_k\right)^{\frac 1 2} M)^T \\
& = & \nu  L (I-\gamma_1\Pi_k) M^{-1} L^T + (I-\gamma_2\Pi_k) M + \\
& & \sqrt{\nu}L\left(I-\gamma_1 \Pi_k\right)^{\frac 1 2} \left(I-\gamma_2 \Pi_k\right)^{\frac 1 2} 
+ \sqrt{\nu} \left(I-\gamma_1 \Pi_k\right)^{\frac 1 2} \left(I-\gamma_2 \Pi_k\right)^{\frac 1 2} L^T \\
& = & \nu  L (I-\gamma_1\Pi_k) M^{-1} L^T + (I-\gamma_2\Pi_k) M + \\
&& + \sqrt{\nu} L \left(\sqrt{\gamma_1\gamma_2}\ \Pi_k +(I- \Pi_k) \right)+ \sqrt{\nu} \left(
\sqrt{\gamma_1\gamma_2}\ \Pi_k +(I- \Pi_k) \right) L^T,
\end{eqnarray*}
where (\ref{gamma12}) was used.
\end{proof}
\vskip 0.1in
Note that the difference between the true and the approximate Schur complement
does not depend on the $\gamma$'s.
%
The following special case of Proposition \ref{prop:hatS=SpG_gen} occurs
when all indices are active, so that $\Pi_k=I$. 
\vskip 0.05in
\begin{corollary}\label{cor:Afull}
If $\mathcal{A}_k = \{1, \ldots, n_h\}$, then $ \widehat{\mathbb S}_k  = {\mathbb S}_k$.
\end{corollary}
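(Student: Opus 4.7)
The plan is to invoke Proposition \ref{prop:hatS=SpG_gen} directly. That proposition gives the identity
$$
\widehat{\mathbb S}_k = {\mathbb S}_k + \sqrt{\nu}\bigl(L(I-\Pi_k) + (I-\Pi_k)L^T\bigr),
$$
so the only task is to verify that the correction term vanishes under the hypothesis $\mathcal{A}_k = \{1,\dots,n_h\}$.

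First I would recall that $\Pi_k = \Pi_{\mathcal{A}_k}$ is by definition the diagonal binary matrix whose $(i,i)$ entry equals $1$ iff $i\in\mathcal{A}_k$. When $\mathcal{A}_k$ is the full index set $\{1,\dots,n_h\}$, every diagonal entry of $\Pi_k$ is $1$, so $\Pi_k = I$ and therefore $I - \Pi_k = 0$. Substituting into the identity above collapses both $L(I-\Pi_k)$ and $(I-\Pi_k)L^T$ to the zero matrix, and the equality $\widehat{\mathbb S}_k = {\mathbb S}_k$ follows immediately.

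There is no real obstacle: the corollary is a one-line consequence of the preceding proposition and the definition of $\Pi_{\mathcal{A}_k}$. The only thing worth noting is a sanity check that the defining formula (\ref{eqn:shat_gen}) for $\widehat{\mathbb S}_k$ remains well defined in this fully active case, which is ensured because the diagonal matrices $I-\gamma_1\Pi_k$ and $I-\gamma_2\Pi_k$ have nonnegative diagonal entries whenever $\gamma_1,\gamma_2\in[0,1]$ (as guaranteed by (\ref{gamma12_def}) and $\gamma_1+\gamma_2=1$), so the square roots in $L_1$ are meaningful.
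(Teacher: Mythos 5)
Your proof is correct and takes exactly the same route as the paper, which presents this corollary as the immediate special case of Proposition~\ref{prop:hatS=SpG_gen} in which all indices are active so that $\Pi_k = I$ and the correction term $\sqrt{\nu}\bigl(L(I-\Pi_k)+(I-\Pi_k)L^T\bigr)$ vanishes. Your additional remark on the well-definedness of the square roots in $L_1$ is a harmless extra check not needed for the argument.
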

\vskip 0.05in
The Schur complement approximation specializes when particular choices
of $\alpha_u$ and $\alpha_v$ are made.
In the CC case, that is for $(\alpha_u, \alpha_y) = (1,0)$,
we obtain 
$$
L_1= \sqrt{\nu}L +(I-\Pi_k)M.
$$
In the case of $L$ symmetric and no bound constraints, that is for
$\mathcal{A}_k = \emptyset$, we obtain $L_1=\sqrt{\nu}L +M$, which 
corresponds to the factor in (\ref{schur-wath}), as introduced in \cite{PearsonWathen2012}.
%
%
%
%
In the Mixed Constraints case, that is
for $(\alpha_u, \alpha_y) =(\epsilon,1)$, we obtain 
$$
L_1 = \sqrt{\nu}L \left(I-\frac{1}{1+\gamma} \Pi_k\right)^{\frac 1 2} + 
\left(I-\frac{\gamma}{1+\gamma} \Pi_k\right)^{\frac 1 2} M ,
$$
with $\gamma=\epsilon^2/\nu$. Note that both (diagonal) matrices under square root have strictly 
positive diagonal elements for $\gamma>0$.
Finally, in the pure State Constraints case, i.e. for $(\alpha_u, \alpha_y) =(0,1)$, we obtain 
$$
L_1 = \sqrt{\nu}L \left(I-\Pi_k\right) +  M.
$$

In the next proposition we derive general estimates for the
inclusion interval for the eigenvalues
of the pencil $({\mathbb S}_k, \widehat{\mathbb S}_k)$, whose
extremes depend on the spectral properties of the nonsymmetric matrix $L$ and on $M$, for
general $\mathcal{A}_k$. Special cases will then be singled out.
\vskip 0.1in
\begin{proposition}\label{prop:estimatesShatS_gen}
Assume that $\widehat{\mathbb S}_k$ is nonsingular. 
Let 
\begin{equation}\label{G}
G_k := F (I- \Pi_k) + (I-\Pi_k) F^T,
\end{equation}
where $F=\sqrt{\nu} M^{-\frac 1 2} L M^{-\frac 1 2}$, with $F$ nonsingular, 
and
\begin{equation}\label{H}
H_k := F(I-\gamma_1 \Pi_k) F^T + (I-\gamma_2 \Pi_k)+  \sqrt{\gamma_1\gamma_2}(F \Pi_k + \Pi_k F^T),
\end{equation}
with
$\gamma_1,\gamma_2$ as defined in (\ref{gamma12_def}). 
Then
\begin{equation}\label{alphamin}
\alpha_{\min} := \displaystyle\min_{z\ne 0}\frac{z^T G_k z}{z^T H_k z}>-1 ,
\end{equation}
and the eigenvalues $\lambda$ of the pencil $({\mathbb S}_k, \widehat {\mathbb S}_k)$
satisfy $\lambda \in \left [ \frac 1 2, \frac{1}{1+\alpha_{\min}}\right].$
\end{proposition}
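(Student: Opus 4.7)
The plan is to reduce the generalized eigenvalue problem $\mathbb{S}_k v = \lambda \widehat{\mathbb{S}}_k v$ to a symmetric pencil in the variable $w = M^{1/2} v$, so that the eigenvalues are expressible through the Rayleigh quotient of $(H_k, H_k + G_k)$, and then derive the two bounds via the definition of $\alpha_{\min}$ (for the upper bound) and Cauchy--Schwarz (for the lower bound).

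First, I would exploit the diagonality of $M$ --- which makes $M$ commute with $\Pi_k$ and with the factors $(I-\gamma_i\Pi_k)^{1/2}$ --- to perform the congruence $v = M^{-1/2} w$. A direct manipulation of the intermediate expression for $\mathbb{S}_k$ derived in the proof of Proposition~\ref{prop:hatS=SpG_gen} gives $v^T \mathbb{S}_k v = w^T H_k w$; Proposition~\ref{prop:hatS=SpG_gen} itself then yields $v^T (\widehat{\mathbb{S}}_k - \mathbb{S}_k) v = w^T G_k w$, and hence $v^T \widehat{\mathbb{S}}_k v = w^T (H_k + G_k) w$. Since $\widehat{\mathbb{S}}_k = L_1 M^{-1} L_1^T$ is assumed nonsingular, $L_1$ is nonsingular and $H_k + G_k$ is symmetric positive definite. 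This at once forces $\alpha_{\min} > -1$, because otherwise one could pick $z \ne 0$ with $z^T (H_k + G_k) z = z^T H_k z + z^T G_k z \le 0$. Writing
\[
\lambda = \frac{w^T H_k w}{w^T (H_k + G_k) w} = \frac{1}{1 + \alpha(w)}, \qquad \alpha(w) := \frac{w^T G_k w}{w^T H_k w} \ge \alpha_{\min},
\]
together with $1 + \alpha_{\min} > 0$, yields the upper bound $\lambda \le 1/(1 + \alpha_{\min})$.

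The lower bound $\lambda \ge 1/2$ is the crux, and is equivalent to $w^T H_k w \ge w^T G_k w$. I would introduce the auxiliary vectors
\[
r = (I - \gamma_1 \Pi_k)^{1/2} F^T w, \qquad s = (I - \gamma_2 \Pi_k)^{1/2} w.
\]
Since $(I-\gamma_1\Pi_k)^{1/2}$ acts as $\sqrt{\gamma_2}$ on active indices and as $1$ on inactive indices, and symmetrically for $(I-\gamma_2\Pi_k)^{1/2}$, using $\gamma_1 + \gamma_2 = 1$ (identity (\ref{gamma12})) one verifies that
\[
w^T H_k w = \|r\|^2 + \|s\|^2 + 2\, r^T \Pi_k s, \qquad w^T G_k w = 2\, r^T (I - \Pi_k) s.
\]
The desired inequality then reduces to $\|r\|^2 + \|s\|^2 \ge 2\, r^T (I - 2\Pi_k) s$. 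Since $I - 2\Pi_k$ is a diagonal $\pm 1$ matrix, $\|(I-2\Pi_k)s\| = \|s\|$, and Cauchy--Schwarz followed by AM--GM gives $2\, r^T(I-2\Pi_k) s \le 2\|r\|\|s\| \le \|r\|^2 + \|s\|^2$, as required.

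The main obstacle I anticipate is the bookkeeping in the substitution step: identity (\ref{gamma12}) must be invoked to match the three summands of $H_k$ to the three terms of $\mathbb{S}_k$ that arise after the congruence, and (implicitly, through the definitions of $r$ and $s$) it is used again in the lower-bound estimate. Once the pencil $(\mathbb{S}_k, \widehat{\mathbb{S}}_k)$ has been rewritten as $(H_k, H_k + G_k)$, the remaining work is a one-line Rayleigh-quotient analysis plus a short Cauchy--Schwarz estimate.
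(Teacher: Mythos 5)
Your proof is correct and follows the same overall strategy as the paper's: the congruence by $M^{-1/2}$ turns the pencil $({\mathbb S}_k,\widehat{\mathbb S}_k)$ into $(H_k,\,H_k+G_k)$ via Proposition~\ref{prop:hatS=SpG_gen}, and the upper bound together with $\alpha_{\min}>-1$ is read off from the positive definiteness of $H_k+G_k=M^{-\frac 1 2}\widehat{\mathbb S}_kM^{-\frac 1 2}$, exactly as in the paper. Where you genuinely improve on the published argument is the lower bound. The paper proves $2H_k\succeq H_k+G_k$ by a further congruence with $(I-\gamma_2\Pi_k)^{-\frac 1 2}$, which forces a separate treatment of the case $\gamma_2=1$ (there handled via $W=F^{-1}(I-\Pi_k)$, hence relying on the nonsingularity of $F$). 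Your substitution $r=(I-\gamma_1\Pi_k)^{\frac 1 2}F^Tw$, $s=(I-\gamma_2\Pi_k)^{\frac 1 2}w$ produces the same completed square --- the inequality $\|r+(2\Pi_k-I)s\|^2\ge 0$ is precisely the paper's $(W+(2\Pi_k-I))(W+(2\Pi_k-I))^T\succeq 0$ pulled back to the vector level --- but since $r$ and $s$ are always well defined, no case split and no invertibility of $F$ or of $I-\gamma_2\Pi_k$ are needed. The two identities $w^TH_kw=\|r\|^2+\|s\|^2+2r^T\Pi_k s$ and $w^TG_kw=2r^T(I-\Pi_k)s$ check out (both rest on $\gamma_1+\gamma_2=1$ as in (\ref{gamma12})), and $(I-2\Pi_k)^2=I$ makes the Cauchy--Schwarz step legitimate. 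This is a cleaner and slightly more general route to the same bound.
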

\begin{proof}
For the sake of readability, we omit the subscript $k$ within this proof.
The matrix $H$ in (\ref{H}) satisfies $H= M^{-\frac 1 2} {\mathbb S} M^{-\frac 1 2}$.
Let 
\begin{equation}\label{Hhat}
\widehat  H= M^{-\frac 1 2} \widehat {\mathbb S} M^{-\frac 1 2}.
\end{equation}
Then by Proposition~\ref{prop:estimatesShatS_gen} we have that
$G,H$ in (\ref{G})  and (\ref{H}) satisfy
$\widehat  H  =  H +G$.  Therefore the problem
${\mathbb S} x = \lambda \widehat{\mathbb S} x$
can be written as $H z  = \lambda (H+G) z$, with $ z = M^{\frac 1 2 } x$, and for $z\ne 0$ we can write
$$
\lambda = \frac{1}{1+\frac{z^T G z}{z^T H  z}}.
$$
For $z\ne 0$ we have 
$\frac{z^T G z}{z^T H  z} > -1$ if and only if
$z^T (G + H) z >0$.
The latter inequality is satisfied since
$ G + H = M^{-\frac 1 2} \widehat{\mathbb S} M^{-\frac 1 2} $, and
$\widehat{\mathbb S}$ is positive definite.
This proves the upper bound for $\lambda$.

To prove the lower bound, we  first consider the case $\gamma_2\neq 1$.
We define $W:=(I-\gamma_2 \Pi)^{-\frac 1 2} F (I-\gamma_1 \Pi)^{\frac 1 2}$
and notice that
$$
(I-\gamma_2 \Pi)^{-\frac 1 2} \widehat{H} (I-\gamma_2 \Pi)^{-\frac 1 2} =
(W+I)(W+I)^T ,
$$
while
\begin{eqnarray*}
&&(I-\gamma_2 \Pi)^{-\frac 1 2} H(I-\gamma_2 \Pi)^{-\frac 1 2} \\
&=& WW^T + I + 
\sqrt{\gamma_1\gamma_2} \left ( W\Pi (I - \gamma_1 \Pi)^{-\frac 1 2}(I-\gamma_2 \Pi)^{-\frac 1 2}
+ (I-\gamma_2 \Pi)^{-\frac 1 2} (I - \gamma_1 \Pi)^{-\frac 1 2} \Pi W^T\right ) \\
&=&
WW^T + I + (W\Pi+\Pi W^T),
\end{eqnarray*}
where the relation (\ref{gamma12})  was used.
For $x\ne 0$ we can thus write
$$
\lambda = 
\frac{ x^T {\mathbb S}x}{x^T \widehat{\mathbb S} x} =
\frac{ y^T (WW^T + I + (W\Pi+\Pi W^T))y}{y^T (W+I)(W+I)^T  y},
$$
where $y=(I-\gamma_2 \Pi)^{\frac 1 2} M ^ {\frac 1 2 }x$. Therefore, $\lambda \ge \frac 1 2$ if and only
if 
$$
\frac{ y^T (WW^T + I + (W\Pi+\Pi W^T))y}{y^T (W+I)(W+I)^T  y} \ge \frac 1 2 \qquad
$$
which is equivalent to
$$
\frac 1 2 y^T( WW^T + I + W (2\Pi - I) + (2\Pi - I)W^T) y \ge 0.
$$
Noticing that $I=(2\Pi - I)(2\Pi - I)$,
it holds 
$$WW^T + I + W (2\Pi - I) + (2\Pi - I)W^T = (W+ (2\Pi-I)) (W+ (2\Pi-I))^T \succeq  0;
$$
Therefore, the last inequality is always verified, proving the lower bound for $\lambda$.

Consider now the case  $\gamma_2 = 1$ (which implies  $\gamma_1= 0$).
We define $W:= F^{-1} (I-\Pi)$.
For $x\ne 0$ we can  write
$$
\lambda = 
\frac{ x^T {\mathbb S}x}{x^T \widehat{\mathbb S} x} = 
\frac{ z^T (FF^T + (I-\Pi) )z}{z^T (F+(I-\Pi))(F+(I-\Pi))^T  z} =
\frac{ y^T (WW^T + I )y}{y^T (W+I)(W+I)^T  y},
$$
where $z = M ^ {\frac 1 2 }x$ and $y= F^T z$. As above, 
$\lambda \ge \frac 1 2$ if and only
if 
$$
\frac{ y^T (WW^T + I )y}{y^T (W+I)(W+I)^T  y} \ge \frac 1 2 
$$
which holds since $ 2(WW^T + I) - (W+I)(W+I)^T= (W-I)(W-I)^T \succeq  0$.
\end{proof}
\vskip 0.1in

Proposition \ref{prop:estimatesShatS_gen} reformulates the eigenvalue
problem with the preconditioned Schur complement in terms of
the eigenvalue problem with a different Rayleigh quotient, which 
seems to be easier to interpret. 
Numerical experiments confirm the sharpness of
the lower extreme (see below); for the upper bound more insightful estimates
can be given under additional hypotheses, and these are explored in
the following.

\vskip 0.05in
\begin{corollary}\label{cor:Aempty_gen}{\rm \cite[Theorem 4.1]{PearsonWathen2013}}
Assume $L + L^T \succeq 0$ and let $\mathcal{A}_k = \emptyset$.
Then the eigenvalues $\lambda$ of the pencil $({\mathbb S}_k, \widehat {\mathbb S}_k)$
satisfy
$\lambda \in \left[ \frac{1}{2} , 1 \right]$.
\end{corollary}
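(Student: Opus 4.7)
The plan is to specialize Proposition \ref{prop:estimatesShatS_gen} to the case $\Pi_k = 0$ and then show that the quantity $\alpha_{\min}$ defined there is nonnegative under the hypothesis $L + L^T \succeq 0$. This gives the upper bound $\lambda \le 1/(1+\alpha_{\min}) \le 1$, while the lower bound $\lambda \ge 1/2$ is inherited directly from the proposition.

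First I would substitute $\Pi_k = 0$ into the expressions (\ref{G}) and (\ref{H}) for $G_k$ and $H_k$. This yields
\[
G_k = F + F^T, \qquad H_k = FF^T + I,
\]
independently of $\gamma_1,\gamma_2$, where $F = \sqrt{\nu}\,M^{-1/2} L M^{-1/2}$.

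Next, I would bound the Rayleigh quotient $z^T G_k z / z^T H_k z$ from below by $0$. Since $L + L^T \succeq 0$ and $M$ is symmetric positive definite (diagonal and positive), the matrix $F + F^T = \sqrt{\nu}\,M^{-1/2}(L+L^T)M^{-1/2}$ is positive semidefinite, so $z^T G_k z \ge 0$ for every $z$. Meanwhile $H_k = FF^T + I \succ 0$ is positive definite, so $z^T H_k z > 0$ for $z \neq 0$, and hence $\alpha_{\min} \ge 0$. Combining with (\ref{alphamin}) we obtain $\lambda \le 1/(1+\alpha_{\min}) \le 1$. The lower bound $\lambda \ge 1/2$ is a direct consequence of Proposition \ref{prop:estimatesShatS_gen}, giving $\lambda \in [1/2, 1]$ as claimed.

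There is essentially no obstacle here: the proof is just a specialization of the general result already proved. The only thing to verify carefully is that $H_k$ is indeed positive definite when $\Pi_k = 0$ (which is immediate since $FF^T \succeq 0$ and the identity block makes $H_k$ strictly positive definite) and that $\widehat{\mathbb S}_k$ is nonsingular, as required by the hypothesis of the proposition. The latter also follows from $\Pi_k = 0$, since then $L_1 = \sqrt{\nu} L + M$ and $\widehat{\mathbb S}_k = L_1 M^{-1} L_1^T$; nonsingularity of $L_1$ follows from $L + L^T \succeq 0$ together with positive definiteness of $M$, ensuring that $L_1 + L_1^T = \sqrt{\nu}(L+L^T) + 2M \succ 0$.
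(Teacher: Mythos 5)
Your proof is correct and follows exactly the route the paper intends: the corollary is stated as a direct specialization of Proposition~\ref{prop:estimatesShatS_gen} (with $\Pi_k=0$ giving $G_k=F+F^T\succeq 0$ and hence $\alpha_{\min}\ge 0$), which is precisely your argument. Your additional checks of the nonsingularity of $\widehat{\mathbb S}_k$ via $L_1+L_1^T\succ 0$ are a welcome bit of care that the paper leaves implicit.
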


\vskip 0.05in
The result of Corollary  \ref{cor:Aempty_gen} generalizes the result of \cite{PearsonWathen2012}
to nonsymmetric and positive semidefinite $L$, showing the optimality and robustness of
the approximation with respect to the problem parameters. 

To be able to analyze another interesting special case, we first need
an auxiliary lemma whose proof is postponed to the appendix. 

\begin{lemma}\label{lemma:F}
Let $F\in{\mathbb R}^{n\times n}$ be such that $F+F^T\succeq0$.  Then

i) $\|(F+I)^{-1}(F-I)\| \le 1;$

ii) $\|(F+I)^{-1}(F+F^T)(F+I)^{-T}\| \le \frac 1 2$.
\end{lemma}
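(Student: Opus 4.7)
The plan is to reduce both bounds to the single algebraic identity
\[
(F+I)(F+I)^T - (F-I)(F-I)^T = 2(F+F^T),
\]
valid for any $F\in\RR^{n\times n}$, combined with the fact that the hypothesis $F+F^T\succeq 0$ makes $F+I$ invertible. For the latter, observe that for any (possibly complex) eigenpair $(\lambda,x)$ of $F$, $\Re(\lambda) = \frac{1}{2}\,x^*(F+F^T)x/\|x\|^2\ge 0$, so $-1\notin\mathrm{spec}(F)$ and $(F+I)^{-1}$ exists.

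For part (i), I would use the standard equivalence $\|B\|\le 1 \iff BB^T\preceq I$. Applied with $B=(F+I)^{-1}(F-I)$, this turns the claim into
\[
(F-I)(F-I)^T \preceq (F+I)(F+I)^T,
\]
which the identity above delivers immediately from the hypothesis $F+F^T\succeq 0$.

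For part (ii), observe that $X:=(F+I)^{-1}(F+F^T)(F+I)^{-T}$ is symmetric and positive semidefinite, so $\|X\|\le \tfrac{1}{2}$ is equivalent to $X\preceq \tfrac{1}{2}I$. A congruence with $(F+I)$ and $(F+I)^T$ rephrases this as
\[
F+F^T \preceq \tfrac{1}{2}(F+I)(F+I)^T;
\]
expanding the right-hand side and cancelling like terms reduces the inequality to $0\preceq FF^T-F-F^T+I = (F-I)(F-I)^T$, which holds trivially.

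No substantive obstacle is anticipated: both parts follow from the same algebraic identity together with the semidefinite characterization of the spectral norm for symmetric matrices, once the invertibility of $F+I$ has been secured through the positive semidefiniteness of the symmetric part. The hypothesis $F+F^T\succeq 0$ is used essentially in (i), whereas in (ii) it enters only through the well-definedness of the expression.
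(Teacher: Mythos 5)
Your proof is correct and rests on exactly the same algebra as the paper's: the identities $(F\pm I)(F\pm I)^T = FF^T+I\pm(F+F^T)$, with the bound in (i) coming from $F+F^T\succeq0$ and the bound in (ii) from $(F-I)(F-I)^T\succeq0$. The paper packages this as generalized eigenvalue problems and Rayleigh quotients where you use Loewner-order inequalities and congruences, but this is a cosmetic difference (your phrasing in (ii) even sidesteps the paper's implicit division by $y^T(F+F^T)y$), so the two arguments are essentially the same.
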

\vskip 0.05in
%
%
 
We can now estimate the eigenvalues of $({\mathbb S}_k, \widehat {\mathbb S}_k)$ for a particular
choice of $\gamma_1, \gamma_2$.

\vskip 0.05in
\begin{proposition} \label{prop:gamma12}
Assume $L+L^T\succeq0$ and let $\gamma_1=\gamma_2=\frac 1 2$. Then 
the eigenvalues $\lambda$ of the pencil $({\mathbb S}_k, \widehat {\mathbb S}_k)$ satisfy
$\lambda \in \left[\frac 1 2\,, \,3\right]$.
\end{proposition}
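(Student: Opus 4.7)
The lower bound $\lambda \geq \tfrac{1}{2}$ is already granted by Proposition \ref{prop:estimatesShatS_gen}, whose argument does not exploit any particular value of $(\gamma_1,\gamma_2)$ (beyond $\gamma_2\ne 1$, which holds here). Thus only the upper bound $\lambda \leq 3$ needs a dedicated argument.

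I would first mirror the proof of Proposition \ref{prop:estimatesShatS_gen} and pass to the symmetrically scaled pencil $(H,\widehat H)$ with $H = M^{-1/2}{\mathbb S}_k M^{-1/2}$ and $\widehat H = M^{-1/2}\widehat{\mathbb S}_k M^{-1/2}$; its eigenvalues coincide with those of $({\mathbb S}_k,\widehat{\mathbb S}_k)$. Specializing (\ref{H}) to $\gamma_1=\gamma_2=\tfrac{1}{2}$ yields $H = FEF^T + E + \tfrac{1}{2}(F\Pi_k + \Pi_k F^T)$ with $E := I - \tfrac{1}{2}\Pi_k$. Moreover, because $M$ is diagonal and hence commutes with any diagonal matrix, the analog of $\tilde L_1 := M^{-1/2} L_1 M^{-1/2}$ collapses cleanly to $(F+I)E^{1/2}$ — here it is crucial that $\gamma_1=\gamma_2$ so that a single diagonal factor $E^{1/2}$ can be pulled out on the right — and therefore $\widehat H = (F+I)\,E\,(F+I)^T$.

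The target is then $3\widehat H - H \succeq 0$, which I would obtain via two elementary semidefinite estimates. \textbf{(a)} Since $E \succeq \tfrac{1}{2}I$, one has $3\widehat H \succeq \tfrac{3}{2}(F+I)(F+I)^T$. \textbf{(b)} Expanding $(F-\Pi_k)(F-\Pi_k)^T \succeq 0$ gives $F\Pi_k + \Pi_k F^T \preceq FF^T + \Pi_k$; substituting this into the expression of $H$, using $FEF^T = FF^T - \tfrac{1}{2}F\Pi_k F^T$, and then discarding the nonpositive contribution $-\tfrac{1}{2}F\Pi_k F^T$ yields $H \preceq \tfrac{3}{2}FF^T + I$. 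Subtracting (b) from (a) leaves $3\widehat H - H \succeq \tfrac{3}{2}(F+F^T) + \tfrac{1}{2}I$, and the standing hypothesis $L+L^T\succeq 0$, which translates to $F + F^T \succeq 0$, closes the estimate and even gives $3\widehat H - H \succ 0$.

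The main obstacle is constant matching: both drops in (a) and (b) must be made at precisely the sharp constants so that the $\tfrac{3}{2}FF^T$ contributions on the two sides cancel exactly, leaving only the problem-dependent quantity $F+F^T$ to be controlled by the hypothesis on $L$. Any weaker estimate — e.g.\ retaining $\Pi_k$ bounds with looser constants, or failing to exploit the clean factorization of $\widehat H$ afforded by $\gamma_1=\gamma_2$ — would destroy this cancellation and fall short of the constant $3$.
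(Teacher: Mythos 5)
Your proof is correct, and it takes a genuinely different route from the paper. The paper also reduces to the scaled pencil $(H,\widehat H)$ and uses the same factorization $\widehat H=(F+I)\bigl(I-\tfrac12\Pi_k\bigr)(F+I)^T$, but it then rewrites $H=(F-I)\bigl(I-\tfrac12\Pi_k\bigr)(F-I)^T+F+F^T$, performs a congruence by $(F+I)^{-1}$ to arrive at a generalized eigenproblem against $I-\tfrac12\Pi_k$, and controls the resulting Rayleigh quotient via the auxiliary Lemma~\ref{lemma:F} (namely $\|(F+I)^{-1}(F-I)\|\le 1$ and $\|(F+I)^{-1}(F+F^T)(F+I)^{-T}\|\le\tfrac12$) together with $u^T(I-\tfrac12\Pi_k)u\ge\tfrac12\|u\|^2$. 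You instead establish the Loewner ordering $3\widehat H-H\succeq 0$ directly from three elementary semidefinite inequalities ($E\succeq\tfrac12 I$, $(F-\Pi_k)(F-\Pi_k)^T\succeq0$, and discarding $-\tfrac12 F\Pi_k F^T$), never inverting $F+I$ and never invoking Lemma~\ref{lemma:F}; I have checked the constant matching and the cancellation of the $\tfrac32 FF^T$ terms, and the final reduction to $3\widehat H-H\succeq\tfrac32(F+F^T)+\tfrac12 I\succ0$ is valid under $L+L^T\succeq0$. What your route buys is self-containedness (within this proposition Lemma~\ref{lemma:F} becomes superfluous, as it is not used elsewhere in the paper) and the slightly stronger conclusion $3\widehat H-H\succeq\tfrac12 I$; what the paper's route buys is a template --- the congruence by $(F+I)^{-1}$ and norm bounds on $(I+Z)^{-1}$ --- that is reused in the proof of Proposition~\ref{prop:upperestimate} for the CC and SC cases, where a clean Loewner inequality with an absolute constant is not available.
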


\vskip 0.05in
\begin{proof} 
For the sake of readability, we omit the subscript $k$ within this proof.
We only have to prove the upper bound. Let $F=\sqrt{\nu}M^{-\frac 1 2} L M^{-\frac 1 2}$, so that $F+F^T\succeq0$.
Proceeding as in the proof of Proposition \ref{prop:estimatesShatS_gen},
the eigenproblem ${\mathbb S} x = \lambda \widehat {\mathbb S} x$ can be transformed into
\begin{equation}\label{eqn:PbF}
 H y = \lambda (H+G) y ,
\end{equation}
with $y= M^{\frac 1 2} x$, where $H$ and $G$ are given in (\ref{H}) and (\ref{G}), respectively.
For $\gamma_1=\gamma_2=\frac 1 2$, we have
$ H +G  = (F+I)\left(I-\frac 1 2 \Pi\right)(F+I)^T$, while
$H = (F-I)\left(I-\frac 1 2 \Pi\right)(F-I)^T + F + F^T$, which can be readily verified.
Therefore, problem (\ref{eqn:PbF}) can be written as
$$
\left((F-I)\left(I-\frac 1 2 \Pi\right)(F-I)^T + F + F^T\right) y = \lambda (F+I)\left(I-\frac 1 2 \Pi\right)(F+I)^T y ,
$$
or equivalently, with $u = (F+I)^T y$, as
\begin{eqnarray}\label{eqn:newe}
(F+I)^{-1}  \left ( (F-I)\left(I-\frac 1 2 \Pi\right)(F-I)^T +  F + F^T \right ) (F+I)^{-T} u = && 
\hskip 0.6in \nonumber \\
  \lambda (I-  \frac 1 2  &&\Pi  ) u .  
\end{eqnarray}
We then multiply (\ref{eqn:newe}) from the left by $u^T\ne 0$,
\begin{eqnarray}\label{eqn:newe1}
u^T (F+I)^{-1}  \left ( (F-I)\left(I-\frac 1 2 \Pi\right)(F-I)^T +  F + F^T \right ) (F+I)^{-T} u = &&
\hskip 0.6in \nonumber \\
 \lambda u^T(I-  \frac 1 2 && \Pi) u ,
\end{eqnarray}
 and
we note that $u^T \left(I-\frac 1 2 \Pi\right) u \ge \frac 1 2 \|u\|^2$. Moreover, 
using Lemma~\ref{lemma:F}
\begin{eqnarray*}
u^T (F+I)^{-1} (F-I) && \!\!\!\!\! (I-\frac 1 2 \Pi)(F-I)^T (F+I)^{-T} u  \\
&\le & \|\left(I-\frac 1 2 \Pi\right)\|  \|(F-I)^T (F+I)^{-T}\|^2 \|u\|^2 \le \|u\|^2 ,
\end{eqnarray*}
and
$u^T (F+I)^{-1}(F + F^T) (F+I)^{-T} u \leq \frac{1}{2} \|u\|^2$.  
Therefore, 
using these last bounds in (\ref{eqn:newe1}) we obtain
$\|u\|^2 + \frac 1 2\|u\|^2 \ge  \lambda \frac 1 2 \|u\|^2$, with $\|u\| \ne 0$,
from which the upper estimate follows.
\end{proof}

In the notation of Proposition \ref{prop:estimatesShatS_gen}, for $\gamma_1=\gamma_2$
we bounded $\alpha_{\min}$ by $-\frac 2 3$.

\vskip 0.1in
\begin{remark}\label{rem:nueps2}
{\rm
The case $\gamma_1=\gamma_2$ comprises MC problems where $\alpha_u=\epsilon$ and
$\alpha_y = 1$, so that the equality $\nu=\epsilon^2$ holds. Therefore, for $\nu=\epsilon^2$ 
Proposition \ref{prop:gamma12} ensures 
a clustered spectrum of the preconditioned Schur complement, and this also strongly
influences the spectrum of the overall preconditioned matrix - see Section \ref{sec:prec} - 
predicting fast convergence of the iterative methods. 
From an application perspective,
these experiments show that if $\nu \approx \epsilon^2$ in the given model, then
a good performance of the solver is expected.
}
\end{remark}

\vskip 0.1in
The good behavior for $\nu=\epsilon^2$ discussed in the remark above
is confirmed by our numerical experiments (see Example~\ref{ex:2}), where
problems with MC constraints (\ref{pb_mc}) are tested for all combinations of values of $\nu$ and $\epsilon$:
the best performance is indeed obtained for $\nu=\epsilon^2$. It is also interesting
to observe that our findings are in agreement with similar experimental observations
reported in \cite{Borzi2007}, where the case $\nu \approx \epsilon^2$ ensured the
best performance of a multigrid solver for the MC problem.

Tables \ref{eigS}-\ref{eigSSC} display the spectral intervals for ${ \widehat{\mathbb S}_k}^{-1} {\mathbb S}_k $ for
the three considered model problems (see Table \ref{table:testpbs}).
In all tables,
the minimum and maximum eigenvalues 
are reported for
the $k$th iteration for which $\lambda_{\max}({ \widehat{\mathbb S}_k}^{-1} {\mathbb S}_k)$ is maximum.
The CC case shows the largest, though still extremely modest, dependence of $\lambda_{\max}$ on the
problem parameters, and this dependence quickly fades as $\beta_1$ increases.
On the other hand, $\lambda_{\min}$ remains largely insensitive to parameter variations, with a
small benign increase from the bound $\frac 1 2$ for $\nu=10^{-2}$ as $\beta_1$ grows.
In the mixed case and $\nu=\epsilon^2$, $\lambda_{\max}$ remains well below the upper estimate $3$, for
a variety of mesh parameter values.

\begin{table}[htb]
\begin{center}
\footnotesize
\begin{tabular}{|c|r|crrr|crrr|}
\hline
 &     &            \multicolumn{ 4}{c|}{$\nu = 10^{-2}$} &            \multicolumn{ 4}{c|}{$\nu = 10^{-6}$} \\
				\hline
$\beta_1$ &     $h$  &        $k$ & $|\mathcal{I}^k|$ & $\lambda_{\min}$ & $\lambda_{\max}$ &        $k$ & $|\mathcal{I}^k|$ & $\lambda_{\min}$ & $\lambda_{\max}$ \\
\hline
0 &\multicolumn{ 1}{c|}{$ 2^{-2}$} &   1 &         98 &       0.51 &       1.24 &          3 &         25 &       0.55 &        4.7 \\
&\multicolumn{ 1}{c|}{$ 2^{-3}$} &       3 &        895 &       0.51 &       1.27 &         17 &         24 &        0.5 &      13.14 \\
\hline

10 &\multicolumn{ 1}{c|}{$ 2^{-2}$} &       1 &         73 &       0.64 &       1.18 &          5 &         57 &       0.54 &       5.32 \\
&\multicolumn{ 1}{c|}{$ 2^{-3}$} &     1 &        891 &       0.61 &       1.24 &          6 &         44 &        0.50 &      10.72 \\
\hline

100 &\multicolumn{ 1}{c|}{$ 2^{-2}$} &    1*&          0 &          1 &          1 &          4 &         49 &       0.51 &       4.82 \\
&\multicolumn{ 1}{c|}{$ 2^{-3}$} &        1 &        120 &       0.95 &       1.01 &          6 &        201 &        0.5 &       6.64 \\

\hline
1000 &\multicolumn{ 1}{c|}{$ 2^{-2}$} &      1*&          0 &          1 &          1 &          2 &         49 &        0.6 &       1.39 \\
&\multicolumn{ 1}{c|}{$ 2^{-3}$} &     1*&          0 &          1 &          1 &          2 &        675 &       0.58 &       1.63 \\

\hline 
\end{tabular}\end{center}
\hskip 0.66in {\footnotesize * Newton terminates in 2 steps.}
\caption{
Control-Constraints: Extreme eigenvalues of ${ \widehat{\mathbb S}_k}^{-1} {\mathbb S}_k $,
Newton iteration $k$, and dimension of the Inactive set, $|\mathcal{I}^k|$, as the mesh size $h$, the 
regularization parameter $\nu$ and the convection parameter $\beta=(\beta_1,0,0)$  vary.}
 \label{eigS}
 \end{table}

\begin{table}[htb]
\begin{center}
\footnotesize
\begin{tabular}{|c|r|ccrrr|ccrrr|}
\hline
   &      &      \multicolumn{ 5}{c|}{$\nu = 10^{-2}$} &            \multicolumn{ 5}{c|}{$\nu = 10^{-6}$} \\
					\hline

  $\beta_1$        & $h$ &        $\epsilon$ & $k$&$|\mathcal{I}^k|$ & $\lambda_{\min}$ & $\lambda_{\max}$ &        $\epsilon$ & $k$ &$|\mathcal{I}^k|$ & $\lambda_{\min}$ & $\lambda_{\max}$ \\
\hline
0  & $2^{-2}$                  & $10^{-1}$  & 1&  196 &       0.53 &       1.10 &          $10^{-1}$  & 2&         165 &       0.64 &    1.97 \\
  &                   & $10^{-2}$  & 1&  294 &       0.51 &       1.51 &          $10^{-2}$  & 1&         196 &       0.75 &    1.10 \\
  & &    $10^{-3}$ &     2&   303 &       0.50 &        1.93 &          $10^{-3}$  & 1&     196&       0.75 &       1.01 \\
  & {$2^{-3}$}           & $10^{-1}$  & 2&         2242 &       0.52 &       1.16 &          $10^{-1}$  & 3&       1212 &       0.51 &        2.63 \\
   &           & $10^{-2}$  & 3&         2782 &       0.51 &       1.97 &          $10^{-2}$  & 1&       1800 &       0.51 &        1.29 \\
 & &   $10^{-3}$ &  3&     3030 &       0.51 &       3.37 &          $10^{-3}$  & 1&  1800 &       0.51 &       1.03 \\
\hline
10  & $2^{-2}$   & $10^{-1}$  &    1&     315 &    0.53 &     1.05 &          $10^{-1}$  &  2&     147 &       0.57 &       3.28 \\
    &                  & $10^{-2}$  &  0*&   343 &       0.53 &     0.93    &          $10^{-2}$  & 1&         196 &       0.69 &    1.37 \\
& &    $10^{-3}$ &    0*&   343 &       0.53 &     0.93  &          $10^{-3}$  &  1&196 &       0.69 &       1.03 \\
  & {$2^{-3}$} & $10^{-1}$   &     1&   2549 &       0.56 &       1.18 &          $10^{-1}$  &  3&     1406 &       0.50 &       5.20 \\
  & & $10^{-2}$   &     1&   3135 &       0.53 &       1.55 &          $10^{-2}$  &  2&     1631 &       0.50 &        1.71 \\
&&   $10^{-3}$ &     1&  3303 &0.53 &     2.45 &          $10^{-3}$  & 1&1800&       0.50 &       1.08 \\
\hline

100  & $2^{-2}$  & $10^{-1}$  &    0*&      343 &    0.84 &   0.98 &          $10^{-1}$  & 2&        196 &       0.51 &        4.40 \\
     &   & $10^{-2}$  &    0*&      343 &    0.84 &   0.98 &          $10^{-2}$  & 2&        196 &       0.51 &        2.49 \\
&&    $10^{-3}$ &   0*&    343 &     0.84 &   0.98 &          $10^{-3}$  &    1&   147 &       0.51 &       1.24 \\
  & {$2^{-3}$} & $10^{-1}$  &   1&        3299 &     0.84 &       1.01  &          $10^{-1}$  & 2&       1575 &       0.50 &       5.9 \\
  &  & $10^{-2}$  &   1&        3367 &     0.84 &       1.12  &          $10^{-2}$  & 2&       1519 &       0.50 &       3.25 \\
&&   $10^{-3}$ &   0*& 3375 &      0.84 &   0.99 &          $10^{-3}$  &  2&  1800     &       0.50&      1.42 \\

\hline
1000  & $2^{-2}$  &   $10^{-1}$  &   0*&      343 &    0.98 &  0.99  &          $10^{-1}$  &  1&      294 &       0.51 &       1.22 \\
  &   &   $10^{-2}$  &   0*&      343 &    0.98 &  0.99  &          $10^{-2}$  &  1&      294 &       0.51 &       1.22 \\
&&    $10^{-3}$ &   0*&    343 &        0.98 &  0.99&          $10^{-3}$  &    1&    294 &       0.52 &       1.24 \\
& $2^{-3}$ & $10^{-1}$  &  0*&        3375 &        0.98 &  0.99 &          $10^{-1}$  &   2 &      2475 &       0.52 &        1.40 \\
&  & $10^{-2}$  &  0*&        3375 &        0.98 &  0.99 &          $10^{-2}$  &   2 &      2644 &       0.52 &        1.34 \\
& &   $10^{-3}$ & 0*& 3375 &        0.98 &  0.99 &          $10^{-3}$  &   2&  2925 &       0.51 &       1.31 \\
\hline
\end{tabular} \end{center}
\hskip 0.25in {\footnotesize * Newton terminates in 1 step.}
\caption{Mixed-Constraints: Extreme eigenvalues of ${ \widehat{\mathbb S}_k}^{-1} {\mathbb S}_k $,
Newton iteration $k$, and dimension of the Inactive set, $|\mathcal{I}^k|$, as the mesh size $h$, the 
regularization parameters $\nu, \epsilon$ and the convection parameter $\beta=(\beta_1,0,0)$ vary.}\label{eigSMC}
 \end{table}

\begin{table}[htb]
\begin{center}
\footnotesize
\begin{tabular}{|c|r|crrr|crrr|}
\hline
 &     &            \multicolumn{ 4}{c|}{$\nu = 10^{-2}$} &            \multicolumn{ 4}{c|}{$\nu = 10^{-6}$} \\
				\hline
$\beta_1$ &     $h$  &        $k$ & $|\mathcal{I}^k|$ & $\lambda_{\min}$ & $\lambda_{\max}$ &        $k$ & $|\mathcal{I}^k|$ & $\lambda_{\min}$ & $\lambda_{\max}$ \\
\hline
0 &\multicolumn{ 1}{c|}{$ 2^{-2}$} &          2 &         303 &       0.50 &       2.01  &          1 &        196&       0.75 &      1.00 \\
&\multicolumn{ 1}{c|}{$ 2^{-3}$} &            3 &       3030 &       0.51 &       3.65 &         1 &         1800 &      0.51 &      1.02 \\
\hline

10 &\multicolumn{ 1}{c|}{$ 2^{-2}$} &          0* &        343 &       0.53 &       0.93 &          1 &       196 &       0.69 &       1.02 \\

&\multicolumn{ 1}{c|}{$ 2^{-3}$} &          1 &       3319 &       0.52 &       2.94 &          1 &       1800 &          0.50 &       1.06  \\

\hline
100 &\multicolumn{ 1}{c|}{$ 2^{-2}$} &          0* &        343 &       0.84 &       0.98 &      1 &        196 &        0.50 &       1.23 \\

&\multicolumn{ 1}{c|}{$ 2^{-3}$} &          0* &       3375 &       0.84 &       0.99 &          2 &       2250 &        0.50 &       1.56 \\

\hline
1000 &\multicolumn{ 1}{c|}{$ 2^{-2}$} &          0* &        343 &       0.98 &       0.99 &          0* &        343 &        0.51 &       0.84 \\

&\multicolumn{ 1}{c|}{$ 2^{-3}$} &          0* &       3375 &       0.98 &       0.99 &          0* &       3375 &        0.51 &       0.91 \\
\hline 
\end{tabular}\end{center}
\hskip 0.66in {\footnotesize * Newton terminates in 1 step.}
\caption{
State-Constraints: Extreme eigenvalues of ${ \widehat{\mathbb S}_k}^{-1} {\mathbb S}_k $,
Newton iteration $k$, and dimension of the Inactive set, $|\mathcal{I}^k|$, as the mesh size $h$, the 
regularization parameter $\nu$ and the convection parameter $\beta=(\beta_1,0,0)$  vary.}
 \label{eigSSC}
 \end{table}

The dependence of $\lambda_{\max}$ on the parameters in the CC and SC cases can be analyzed by
using the following result, whose proof is postponed to the appendix. 

\begin{proposition}\label{prop:upperestimate}
Let $\lambda$ be an eigenvalue of $\widehat {\mathbb S}_k^{-1} {\mathbb S}_k$.
Then in the CC and SC case it holds
$$
\lambda \le \zeta^2 + (1 + \zeta)^2 ,
$$
with

\noindent
i) If $(\alpha_u, \alpha_y)=(1,0)$ (CC case), then
$\zeta = \|M^{\frac 1 2} \left (\sqrt{\nu} L  +M (I-\Pi)\right )^{-1} \sqrt{\nu} L M^{-\frac 1 2} \|$;
Moreover, if $L+L^T\succ 0$, then
for $\nu\to 0$, $\zeta$ is bounded by a constant independent of $\nu$;

\noindent
ii) If $(\alpha_u, \alpha_y)=(0,1)$ (SC case), then
$\zeta =\|(I+\sqrt{\nu}M^{-\frac 1 2}LM^{-\frac 1 2}(I-\Pi_k))^{-1}\|$;
Moreover, $\zeta\to 1$ for $\nu\to 0$.
%
%
%
\end{proposition}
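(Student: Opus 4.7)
The strategy is to exploit the factorization $\widehat{\mathbb{S}}_k = L_1 M^{-1} L_1^T$ together with the explicit difference $\widehat{\mathbb{S}}_k - \mathbb{S}_k = \sqrt{\nu}(L(I-\Pi_k)+(I-\Pi_k)L^T)$ from Proposition~\ref{prop:hatS=SpG_gen}. Substituting $x = L_1^{-T} M^{\frac 1 2} z$ in the Rayleigh quotient $\lambda = x^T \mathbb{S}_k x / x^T \widehat{\mathbb{S}}_k x$ transforms the denominator into $\|z\|^2$, giving
$$
\lambda = 1 - \frac{\sqrt{\nu}\, z^T W_k z}{\|z\|^2}, \qquad W_k = M^{\frac 1 2}L_1^{-1}\bigl(L(I-\Pi_k)+(I-\Pi_k)L^T\bigr)L_1^{-T}M^{\frac 1 2}.
$$
Since the two summands in $W_k$ are transposes of each other, $\|\sqrt{\nu}W_k\| \le 2\,\|\sqrt{\nu} M^{\frac 1 2}L_1^{-1} L(I-\Pi_k) L_1^{-T} M^{\frac 1 2}\|$, and it suffices to bound this single term by $\zeta(1+\zeta)$ in each of the two cases.

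In the CC case, where $L_1=\sqrt{\nu}L+M(I-\Pi_k)$, I introduce $Z=M^{\frac 1 2}L_1^{-1}\sqrt{\nu}L M^{-\frac 1 2}$ so that $\|Z\|=\zeta$. Transposing the identity $L_1-\sqrt{\nu}L = M(I-\Pi_k)$, multiplying by $L_1^{-T}$ on the right, and conjugating by $M^{\pm 1/2}$ yields the key identity $M^{\frac 1 2}(I-\Pi_k)L_1^{-T}M^{\frac 1 2} = I - Z^T$. Hence $\sqrt{\nu}M^{\frac 1 2}L_1^{-1}L(I-\Pi_k)L_1^{-T}M^{\frac 1 2} = Z(I-Z^T)$, whose norm is bounded by $\zeta(1+\zeta)$. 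In the SC case, where $L_1 = \sqrt{\nu}L(I-\Pi_k)+M$, I set $F = \sqrt{\nu}M^{-\frac 1 2}LM^{-\frac 1 2}$ and $T = M^{\frac 1 2}L_1^{-1}M^{\frac 1 2} = (I+F(I-\Pi_k))^{-1}$, so $\|T\|=\zeta$, and observe that $T F(I-\Pi_k) = I - T$. Therefore $\sqrt{\nu}M^{\frac 1 2}L_1^{-1}L(I-\Pi_k)L_1^{-T}M^{\frac 1 2} = (I-T)T^T$, again of norm at most $\zeta(1+\zeta)$. Collecting these estimates in both cases gives $\lambda \le 1 + 2\zeta(1+\zeta) = \zeta^2 + (1+\zeta)^2$.

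For the behavior of $\zeta$ as $\nu\to 0$, the SC case is immediate: $F\to 0$ implies $T\to I$ and so $\zeta\to 1$. The CC case requires more care, because $M(I-\Pi_k)$ is singular on the active set and one cannot simply pass to the limit. I would rewrite $\zeta = \|(F+(I-\Pi_k))^{-1}F\| = \|I - (F+(I-\Pi_k))^{-1}(I-\Pi_k)\|$ and partition according to active/inactive indices. The $\nu$-dependence concentrates in the active block $F_{aa} = \sqrt{\nu}G_{aa}$, with $G = M^{-\frac 1 2}LM^{-\frac 1 2}$. Using the block-Schur-complement formula for $(F+(I-\Pi_k))^{-1}(I-\Pi_k)$, the dangerous factor $F_{aa}^{-1} = \nu^{-\frac 1 2} G_{aa}^{-1}$ appears only paired with a $\sqrt{\nu}G_{ai}$ factor, so the net $\nu$-powers cancel; what is left is bounded in terms of $G_{aa}^{-1}$ and a perturbation of $I$ that tends to $I$ as $\nu\to 0$.

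The main obstacle is precisely this $\nu\to 0$ analysis in the CC case: one has to track the $\sqrt{\nu}$-powers in every block of the Schur-complement formula to see that blowups cancel. This is where the strict inequality $L+L^T\succ 0$ (rather than the $\succeq 0$ used elsewhere in the paper) becomes essential, since it yields $G_{aa}+G_{aa}^T \succ 0$ and hence uniform invertibility of $G_{aa}$ with a norm bound independent of $\nu$, which is exactly what is needed for the cancellation to produce a $\nu$-independent bound on $\zeta$.
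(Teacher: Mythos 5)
Your proof is correct, and it reaches the paper's bound by a genuinely different route in two places. For the main estimate, the paper transforms the pencil into the explicit similarity $(I+Z)^{-1}(I+ZZ^T)(I+Z)^{-T}$ (with $Z=F^{-1}(I-\Pi)$ in the CC case, $Z=F(I-\Pi)$ in the SC case) and bounds its norm by $\|(I+Z)^{-1}\|^2+\|I-(I+Z)^{-1}\|^2$; you instead start from the additive identity $\widehat{\mathbb S}_k={\mathbb S}_k+\sqrt{\nu}(L(I-\Pi_k)+(I-\Pi_k)L^T)$ of Proposition~\ref{prop:hatS=SpG_gen}, normalize the denominator via $x=L_1^{-T}M^{1/2}z$, and bound the single cross term by $\zeta(1+\zeta)$ using the identities $M^{1/2}(I-\Pi_k)L_1^{-T}M^{1/2}=I-Z^T$ (CC) and $TF(I-\Pi_k)=I-T$ (SC); both computations check out and yield the identical quantity $\zeta^2+(1+\zeta)^2$ with the same $\zeta$. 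Your route has the small advantage of never needing $F^{-1}$ for the bound itself, only $L_1^{-1}$. For the $\nu\to0$ behavior in the CC case, the paper diagonalizes $\widetilde F^{-1}(I-\Pi)$ and pays a ${\rm cond}(X)$ factor (with a footnote to handle possible Jordan blocks), whereas your block partition of $F+(I-\Pi)$ with the Schur complement $F_{ii}+I-F_{ia}F_{aa}^{-1}F_{ai}$ shows directly that $F_{aa}^{-1}F_{ai}=G_{aa}^{-1}G_{ai}$ is $\nu$-free and that $(F+(I-\Pi))^{-1}(I-\Pi)$ has an explicit $\nu$-independent limit; this avoids the diagonalizability caveat entirely and correctly isolates $L+L^T\succ0$ as the hypothesis guaranteeing invertibility of $G_{aa}$ (a principal submatrix of a matrix with positive definite symmetric part). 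The only cosmetic remark is that your phrase ``uniform invertibility of $G_{aa}$'' is slightly misleading, since $G_{aa}$ does not depend on $\nu$ at all --- plain invertibility is what is used; and the asymptotic step is only sketched in your write-up, but the sketch is complete enough that the details follow as indicated.
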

\vskip 0.1in

The boundedness of $\zeta$ as $\nu \to 0$  
in both the CC and SC cases justifies the 
good behavior of the eigenvalues shown in Tables~\ref{eigS} and \ref{eigSSC}.  


\section{New preconditioners for the active-set Newton method}\label{sec:prec}
In this section we propose two classes of preconditioners, which can be used
throughout the nonlinear iterations, and automatically modified
 as the system dimensions dynamically
change due to the different number of active indices. 
More precisely, for the problem partitioned as in
(\ref{eqn:J_part}) we consider the following block diagonal preconditioner
$\pd$, and  indefinite preconditioner 
$\pc$:
\begin{equation}\label{BDF}
  \pd = 
\begin{bmatrix}
A & 0   \\
0 & \widehat{S}_k 
\end{bmatrix},
\qquad
\end{equation}
and
\begin{equation}\label{CPF}
  \pc = \begin{bmatrix}
I & 0   \\
B_k A^{-1}  & I 
\end{bmatrix}
\begin{bmatrix}
A & 0   \\
0 & - \widehat{S}_k 
\end{bmatrix}
\begin{bmatrix}
I & A^{-1} B_k^T  \\
0 & I 
\end{bmatrix}, 
\end{equation}
where in both cases, the matrix $\widehat S_k$ is factorized as
\begin{eqnarray*}
\widehat S_k   & = & 
\frac{1}{\nu} R_k \begin{bmatrix} \widehat {\mathbb S}_k  & 0 \\ 
 0 & (\alpha_y^2\nu + \alpha_u^2)  P_{\mathcal{A}_k} M^{-1} P_{\mathcal{A}_k}^T \end{bmatrix}  
  R_k^T,
  \end{eqnarray*} 
with $\widehat{\mathbb S}_k =  L_1 M^{-1} L_1^T$,
and $R_k$ and $L_1$ given in (\ref{Rk}) and (\ref{eqn:shat_gen}), respectively.
%
The following result can be readily proved from Proposition \ref{prop:estimatesShatS_gen}.

\begin{proposition}\label{prop:estimatesPJ_gen}
Assume that $\widehat{\mathbb S}_k$ is nonsingular and  let  $\alpha_{\min}$ be
 as defined in  (\ref{alphamin}).
Then the eigenvalues $\lambda$ of the pencil $(J_k, \pd)$
satisfy
$$ 
\lambda\left(J_k , \; \mathcal{P}_k^{BDF}\right) \in 
\left\{1 , \; \frac{1 \pm \sqrt{5}}{2}\right\} \cup I^- \cup I^+ ,
$$
where
{\footnotesize
$$ 
I^- = \left[\frac{1}{2} \left( 1 - \sqrt{1 + \frac{4}{\left(1 + \alpha_{\min}\right)^2 } } \right), \; 
\frac{1 - \sqrt{2}}{2}\right], 
\,\,
I^+ = \left[ \frac{1 + \sqrt{2}}{2} , \; 
\frac{1}{2} \left( 1 + \sqrt{1 + \frac{4}{\left(1 + \alpha_{\min}\right)^2 } } \right) \right]. 
$$
}
The eigenvalues $\lambda$ of the pencil $(J_k, \pc)$ satisfy
$$
\lambda(J_k, \pc) \in \{1\}\cup \left [\frac{1}{2}, \frac{1}{1+\alpha_{\min}} \right ].
$$
\end{proposition}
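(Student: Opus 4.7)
The plan is to analyse both preconditioners through the same intermediate object, namely the spectrum of $\widehat S_k^{-1}S_k$. Because $S_k$ and $\widehat S_k$ share the outer factor $R_k$ and the same $(2,2)$ diagonal block, the matrix $\widehat S_k^{-1}S_k$ is similar to the block-diagonal matrix $\mathrm{diag}(\widehat{\mathbb S}_k^{-1}\mathbb S_k,\,I)$. Hence, by Proposition~\ref{prop:estimatesShatS_gen}, its eigenvalues lie in $[1/2,\,1/(1+\alpha_{\min})]\cup\{1\}$.

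For the indefinite preconditioner $\pc$, I would exploit the block-$LU$-type decomposition $J_k=Z\,\mathrm{diag}(A,-S_k)\,Y$, where $Y$ and $Z$ are the unit upper- and lower-triangular block factors that also appear in the very definition of $\pc$. The only difference between $J_k$ and $\pc$ is then the replacement of $-S_k$ by $-\widehat S_k$ in the central diagonal block. Consequently $\pc^{-1}J_k$ is similar (via $Y$) to $\mathrm{diag}(I,\,\widehat S_k^{-1}S_k)$, from which the claimed spectrum $\{1\}\cup[1/2,\,1/(1+\alpha_{\min})]$ follows immediately from the first step.

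For the block diagonal preconditioner $\pd$ I would use the classical Murphy--Golub--Wathen reduction. Writing the generalized eigenvalue problem $J_kv=\lambda\pd v$ with $v=(x,y)$ blockwise gives $(1-\lambda)Ax+B_k^Ty=0$ and $B_kx=\lambda\widehat S_k y$. The case $\lambda=1$ forces $y\in\ker B_k^T$ and $x\in\ker B_k$, accounting for the discrete eigenvalue $1$; when $\lambda\ne1$, substituting $x=(\lambda-1)^{-1}A^{-1}B_k^Ty$ into the second block-equation yields $\widehat S_k^{-1}S_k\,y=\lambda(\lambda-1)y$. Therefore every eigenvalue $\mu$ of $\widehat S_k^{-1}S_k$ produces a $\lambda$ satisfying the quadratic $\lambda^2-\lambda-\mu=0$, i.e.\ $\lambda=(1\pm\sqrt{1+4\mu})/2$. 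Plugging in $\mu=1$ (the trivial block contribution) recovers the discrete pair $(1\pm\sqrt{5})/2$, while as $\mu$ ranges over $[1/2,\,1/(1+\alpha_{\min})]$ the two branches trace out the intervals $I^-$ and $I^+$ thanks to the monotonicity of $\mu\mapsto\sqrt{1+4\mu}$.

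The main difficulty I anticipate is not spectral but purely algebraic bookkeeping: cleanly separating the contribution of the discrete eigenvalues (arising from the trivial block $I$ in the similarity of the first step, and from $\ker B_k$ in the BDF case) from the continuous contribution coming from $\widehat{\mathbb S}_k^{-1}\mathbb S_k$, and checking that the endpoint constants in $I^\pm$ correspond correctly to the substitutions $\mu=1/2$ and $\mu=1/(1+\alpha_{\min})$ in $\lambda=(1\pm\sqrt{1+4\mu})/2$. Beyond this accounting, no spectral input other than Proposition~\ref{prop:estimatesShatS_gen} and the similarity of the first step is needed.
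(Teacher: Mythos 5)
Your strategy coincides with the paper's own proof in all essentials: the similarity $\widehat S_k^{-1}S_k = R_k^{-T}\,{\rm blkdiag}(\widehat{\mathbb S}_k^{-1}\mathbb S_k,\,I)\,R_k^{T}$ is exactly the paper's observation that ${\rm spec}(\widehat S_k^{-1}S_k)=\{1\}\cup{\rm spec}(\widehat{\mathbb S}_k^{-1}\mathbb S_k)$; your treatment of $\pc$ is the same block-$LDU$ similarity the paper writes in (\ref{eqn:CP}); and your blockwise Murphy--Golub--Wathen reduction for $\pd$ is an inlined proof of the lemma of Fischer et al.\ that the paper cites, arriving at the same relation $\lambda^2-\lambda=\mu$ with $\mu\in{\rm spec}(\widehat S_k^{-1}S_k)$. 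The discrete values $1$ and $\frac{1\pm\sqrt5}{2}$ (from $\mu=1$) and the $\pc$ spectrum $\{1\}\cup[\frac12,\frac{1}{1+\alpha_{\min}}]$ come out correctly in your version.

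The one step you explicitly defer --- ``checking that the endpoint constants in $I^\pm$ correspond correctly to the substitutions $\mu=1/2$ and $\mu=1/(1+\alpha_{\min})$'' --- is precisely where the argument does not close as you claim. Substituting $\mu=\frac12$ into $\lambda=\frac12(1\pm\sqrt{1+4\mu})$ gives $\frac12(1\pm\sqrt3)$, not $\frac12(1\pm\sqrt2)$, and substituting $\mu=\frac{1}{1+\alpha_{\min}}$ gives $\frac12\bigl(1\pm\sqrt{1+\tfrac{4}{1+\alpha_{\min}}}\bigr)$, not $\frac12\bigl(1\pm\sqrt{1+\tfrac{4}{(1+\alpha_{\min})^2}}\bigr)$. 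The printed intervals $I^\pm$ are what one obtains by letting the \emph{singular value} $\sigma$ of $\widehat S_k^{-1/2}B_kA^{-1/2}$, rather than its square $\sigma^2=\mu$, range over $[\frac12,\frac{1}{1+\alpha_{\min}}]$; the paper's own proof exhibits the same tension, since its text correctly identifies $\sigma^2$ with an eigenvalue of $\widehat S_k^{-1}S_k$ while the displayed intervals are consistent with identifying $\sigma$ instead. So your derivation is sound, but the intervals it actually produces are $\bigl[\frac12\bigl(1-\sqrt{1+\tfrac{4}{1+\alpha_{\min}}}\bigr),\,\frac{1-\sqrt3}{2}\bigr]$ and $\bigl[\frac{1+\sqrt3}{2},\,\frac12\bigl(1+\sqrt{1+\tfrac{4}{1+\alpha_{\min}}}\bigr)\bigr]$; monotonicity of $\mu\mapsto\sqrt{1+4\mu}$ alone cannot turn these into the printed $I^\pm$, and you should either state the intervals you actually obtain or flag the discrepancy rather than assert the bookkeeping works out.
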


\begin{proof}
We observe that the pencil $\left(J_k , \; \mathcal{P}_k^{BDF}\right)$ has the same eigenvalues as:

$$ 
\left(\mathcal{P}_k^{BDF}\right)^{-1/2} J_k \left(\mathcal{P}_k^{BDF}\right)^{-1/2} = 
\begin{bmatrix} I & A^{-1/2} B_k^T \widehat{S}^{-1/2}_k \\ \widehat{S}^{-1/2}_k B_k A^{-1/2} & 0 \end{bmatrix}.
$$

Using \cite[Lemma 2.1]{Fischer1998}, the eigenvalues of the pencil $(J_k , \; \mathcal{P}_k^{BDF})$ 
are either 1 or have the form $ \frac 1 2 \left(1 \pm \sqrt{ 1 + 4 \sigma^2 } \right)$, where $\sigma$ is a singular value 
of $\widehat{S}^{-1/2}_k B_k A^{-1/2}$, that is, $\sigma^2$ is an eigenvalue of $\widehat{S}_k^{-1} S_k$. 
Considering that ${\rm spec}\left(\widehat{S}_k^{-1} S_k\right) = 
\left\{1\right\} \cup {\rm spec}\left(\widehat{\mathbb{S}}_k^{-1} \mathbb{S}_k\right)$, we have
$$ 
\lambda\left( J_k , \; \mathcal{P}_k^{BDF} \right) \in \left\{1, \frac{1 \pm \sqrt{5} }{2} \right\} \cup \left\{ \frac{1}{2} \left(1 \pm \sqrt{ 1 + 4 \sigma^2 } \right) \left.\right| \; \sigma^2 \in {\rm spec} \left(\widehat{\mathbb{S}}_k^{-1} \mathbb{S}_k \right)\right\}.
$$
The claim thus follows from Proposition 4.3.

As for the pencil $\left(J_k , \; {\cal P}_k^{IPF}\right)$, we have the factorization
\begin{eqnarray}\label{eqn:CP}
\left({\cal P}_k^{IPF}\right)^{-1} J_k = 
\begin{bmatrix}  
I & -A^{-1} B_k \\ 0 & I 
\end{bmatrix}
\begin{bmatrix} 
I & 0 \\ 0 & \widehat{S}_k^{-1} S_k \end{bmatrix} \begin{bmatrix} I & A^{-1} B_k \\ 0 & I  
\end{bmatrix}  .
\end{eqnarray}
Again, the result follows from Proposition~\ref{prop:estimatesShatS_gen}.
\end{proof}

Under the stated hypotheses,
refined bounds for the eigenvalues of the  indefinitely preconditioned problem
can be derived using the bounds 
for the eigenvalues of ${\widehat {\mathbb S}_k}^{-1} {\mathbb S}_k$ obtained
in Corollary~\ref{cor:Aempty_gen}, Proposition~\ref{prop:gamma12} and Proposition~\ref{prop:upperestimate}.

In the case of indefinite preconditioning, the preconditioned matrix $\left({\cal P}_k^{IPF}\right)^{-1}J_k$
has real spectrum, however it
is no longer symmetric so that in general, a nonsymmetric solver needs to be applied.
In our numerical experiments we used {\sc gmres} \cite{GMRES}, for which it is known
that the eigenvalues alone may not be sufficient to predict convergence, but that also
eigenvectors play a role. In addition, indefinite preconditioners
are often plagued by the presence of Jordan blocks, whose sensitivity may influence
the use of inexact strategies; see, e.g., \cite{Sesana.Simoncini.13} for a detailed discussion.
Fortunately, since the (1,1) block of $J_k$ is reproduced exactly in the preconditioner,
in our setting the spectral structure is considerably simplified, and in particular,
Jordan blocks do not occur. The following proposition determines the complete
eigenvector decomposition of the preconditioned matrix.

\begin{proposition}\label{prop:invspace}
Let $\widehat S_k^{-1} S_k X = X \Lambda$ be the eigendecomposition of $\widehat S_k^{-1} S_k$,
with $X=[X_1,X_2]$ and $\Lambda={\rm blkdiag}(I,\Lambda_2)$ 
 partitioned so that $X_1$ contains the eigenvectors corresponding to
the unit eigenvalue. Then the preconditioned matrix $\left({\cal P}_k^{IPF}\right)^{-1} J_k$
admits the following eigenvalue decomposition
$$
\left({\cal P}_k^{IPF}\right)^{-1} J_k =
Q \begin{bmatrix} I &  & \\ & I & \\ & & \Lambda_2\end{bmatrix} Q^{-1},
$$
with
$$
Q = 
\left [ \begin{array}{c|c|c}
I & 0   & -A^{-1}B_k X_2 \\
0 & X_1 & X_2 
\end{array}\right ]  ,
\qquad
Q^{-1} = 
\left [ \begin{array}{c|c}
I & A^{-1}B_k X_2 X_2^T \widehat S_k \\ 
0 & X_1^T \widehat S_k \\ 
0 & X_2^T \widehat S_k 
\end{array}\right ]. 
$$
\end{proposition}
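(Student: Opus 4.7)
The plan is to use the block factorization of $({\cal P}_k^{IPF})^{-1} J_k$ already established in (\ref{eqn:CP}) and to verify the claimed eigendecomposition by direct block multiplication, rather than constructing it from scratch. First I would substitute the eigendecomposition $\widehat S_k^{-1} S_k = X\Lambda X^{-1}$ into (\ref{eqn:CP}); this immediately shows that $({\cal P}_k^{IPF})^{-1} J_k$ is similar to $\mathrm{blkdiag}(I,\Lambda)$, so its eigenvalues are those announced in Proposition \ref{prop:estimatesPJ_gen}. The nontrivial part is to exhibit the invariant subspaces explicitly, and in particular to show that the unit eigenvalues of $\widehat S_k^{-1} S_k$ combine with the unit eigenvalue of the top-left $I$ block into a single invariant subspace on which the full preconditioned matrix acts as the identity.

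Next I would simply verify the identity $({\cal P}_k^{IPF})^{-1} J_k\, Q = Q\, \mathrm{blkdiag}(I,I,\Lambda_2)$ by applying the three factors of (\ref{eqn:CP}) to $Q$ in succession. The key cancellation is the following: the action of $\begin{bmatrix} I & A^{-1}B_k \\ 0 & I \end{bmatrix}$ on the last block column of $Q$ kills the entry $-A^{-1}B_k X_2$, leaving zero; applied to the middle block column it produces $A^{-1}B_k X_1$ in the upper block. Then $\mathrm{blkdiag}(I,\widehat S_k^{-1} S_k)$ leaves the middle block column unchanged (since $\widehat S_k^{-1} S_k X_1=X_1$) and scales the third by $\Lambda_2$. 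Finally $\begin{bmatrix} I & -A^{-1}B_k \\ 0 & I \end{bmatrix}$ cancels the spurious $A^{-1}B_k X_1$ and regenerates $-A^{-1}B_k X_2 \Lambda_2$ in the last column, matching $Q\,\mathrm{blkdiag}(I,I,\Lambda_2)$ exactly.

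To justify the formula for $Q^{-1}$, I would note that since $S_k$ and $\widehat S_k$ are symmetric positive definite, $\widehat S_k^{-1} S_k$ is self-adjoint in the $\widehat S_k$-inner product, so its eigenvectors can be chosen $\widehat S_k$-orthonormal, i.e.\ $X^T \widehat S_k X=I$, which yields $X^{-1}=X^T \widehat S_k$ and the block orthogonality relations $X_1^T \widehat S_k X_1 = I$, $X_2^T \widehat S_k X_2 = I$, $X_1^T \widehat S_k X_2 = 0$. A direct multiplication $Q^{-1}Q$ then reduces to an arrangement of these block identities, with the $(1,3)$ block of $Q^{-1}Q$ given by $-A^{-1}B_k X_2 + A^{-1}B_k X_2(X_2^T \widehat S_k X_2) = 0$ and the $(1,2)$ block vanishing because of $X_2^T\widehat S_k X_1=0$.

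The main (mild) obstacle is the choice of normalization for the eigenvectors: one must point out that the $\widehat S_k$-orthonormal scaling is what makes the closed-form expression for $Q^{-1}$ valid, and that this scaling is admissible because $\widehat S_k^{-1} S_k$ is similar to a symmetric matrix. Once this is stated, the remainder of the argument is purely a bookkeeping verification based on (\ref{eqn:CP}).
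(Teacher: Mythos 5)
Your proposal is correct and follows essentially the same route as the paper: the paper likewise verifies the decomposition by direct substitution into the block factorization (\ref{eqn:CP}) (after collapsing it to the block upper triangular form $\begin{bmatrix} I & A^{-1}B_k(I-\widehat S_k^{-1}S_k) \\ 0 & \widehat S_k^{-1}S_k\end{bmatrix}$) and obtains $Q^{-1}$ from the same normalization $X^T\widehat S_k X = I$. Your version merely spells out the factor-by-factor cancellations and the justification for the $\widehat S_k$-orthonormal scaling in more detail.
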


\begin{proof} 
Writing
$$
\left({\cal P}_k^{IPF}\right)^{-1} J_k =
\begin{bmatrix}
I & A^{-1}B_k (I-\widehat S_k^{-1} S_k) \\
0 & \widehat S_k^{-1} S_k
\end{bmatrix} ,
$$
the decomposition can be explicitly verified upon substitution. 
The nonsingularity of $Q$ follows from that of $X=[X_1, X_2]$.
The inverse of $Q$ can be derived by observing that $X$ can be chosen so
that $X^T \widehat S_k X = I$.
\end{proof}

The explicit form of Proposition \ref{prop:invspace} allows one to use standard results
to bound the {\sc gmres} residual norm,
by providing bounds for the norm of $Q$ and its inverse $Q^{-1}$, and
exploiting the fact that the spectrum of the preconditioned matrix is real
(see, e.g., \cite[Prop. 6.32]{Saad-book}).

\section{Numerical experiments}\label{num_exp}
In this section we provide a detailed performance analysis
of the proposed preconditioners $\pc$   in (\ref{CPF}) and $\pd$ in (\ref{BDF}) 
for the active-set Newton method and use problems with constraints in (\ref{pb_cc})-(\ref{pb_sc}) as prototypical problems. 
In particular, the analysis of the pure State Constraints case (\ref{pb_sc}) will
be analyzed as the limit case of the MC constraints (\ref{pb_mc}) for $\epsilon \rightarrow 0$.

\begin{table}[htb]
\centering
\begin{tabular}{r|c|c|c|l}
label        & $\Omega$   &  $a$ & $b$ & $y_d$ \\ \hline
{\tt CC-Pb1} & $(-1,1)^3$ &  $0$   & $2.5$ & $1$ for $|x_1|\le \frac 1 2$, $-2$ otherwise \\
{\tt CC-Pb2} & $(0,1)^3$  &  $\frac 1 {10} \exp( -\|x\|^2)$ & $\frac 1 2$ & $\exp(-64 \|x-\frac 1 2\|^2)$ \\
{\tt MC-Pb1} & $(-1,1)^3$ &  $-\infty$ & $0$ & $1$ for $|x_1|\le \frac 1 2$, $-2$ otherwise \\ \hline
\end{tabular}
\caption{Problem data for the numerical experiments. Here $x=(x_1,x_2,x_3)\in\Omega$. \label{table:testpbs}}
\end{table}

In all our examples, we use the three-dimensional data for the discretized problem generated by
the codes in \cite{HerzogSachs2010}. 
The matrices stem from the discretization by
upwind finite differences on a uniform three-dimensional grid (so that $L + L^T \succ 0$).
Zero Dirichlet boundary conditions, that is $\bar y =0$ in (\ref{pb_gen}), were used
throughout.
{\color{black} We stress that for large convection and in the presence of boundary layers,
other discretizations may be more suitable; we refer to the recent nice essay by Stynes on the 
pros and cons of different approaches \cite{Stynes.13}. We also notice that different discretization
techniques will lead to coefficient matrices $L$ with possibly quite different spectral properties.}

In Table \ref{table:testpbs} information on the data used in our numerical experiments can be found, for
two test cases with control constraints, and one test case for 
mixed and state constraints; here $x=(x_1,x_2,x_3)$ is an element of $\Omega$.
The mesh parameter in each direction was taken as
$h \in \{2^{-2},2^{-3},2^{-4},2^{-5}\}$ which corresponds to 
a dimension for the state or control vectors 
$n_h \in \{343, 3375, 29791, 250047\}$. The total linear system dimension
is thus between $3 n_h$ and $4 n_h$, depending on the number of indices in the active set
at each Newton iteration.

 \subsection{Algorithmic considerations} \label{sec:algo}
Throughout this section we consider the implementation
of the active-set Newton method with the following solvers and preconditioning strategies:

\vskip 0.1in
\begin{tabular}{ll}
\ascp& Active-set Newton method with
linear solver {\sc gmres}  \\
& preconditioned with $\pc$; \\
\asbd& Active-set Newton method with
linear solver {\sc minres} \\
& preconditioned with $\pd$; \\
{\asbt}& Variant of active-set Newton method as proposed in \cite{StollWathen2012}, \\
& with {\sc bpcg} preconditioned with ${\cal P}^{BT}$
defined in (\ref{BT}). \\
\end{tabular}
\vskip 0.1in

The application of the Schur complement approximation $\widehat {\mathbb S}_k$ requires solving
with $L_1$ and its transpose in (\ref{eqn:shat_gen}). These solves were replaced by the use of an algebraic multigrid
operator ({\sc hsl-mi20}, \cite{HSL-MI20}), which needs to be 
recomputed at each Newton iteration. 
{\sc hsl-mi20} is used with all default parameters except for the value
{\tt control.st\_parameter=$10^{-4}$}. Moreover, we set the number of pre/post smoothing steps 
equal to 5 for all the experiments with the MC problems, while  with CC problems only
for the finest mesh $h=2^{-5}$.  Although in most cases satisfactory results were obtained with
this software, we did experience some anomalous behavior  when strong convection
was used. In these cases, ad-hoc algebraic multigrid strategies should be adopted.
We also recall that both $\Pi_k$ and $M$ are diagonal,
therefore $L_1$ is obtained from the convection-diffusion matrix by scaling, and
then modifying its diagonal. 

According to \cite{Wathen2008},
we used $A_0=0.9 M$ and $A_1=0.9(\nu\, M)$ for the parameterized preconditioners in (\ref{BT}) within
the {\sc bpcg} iteration. Systems with $L$ to apply $S_0$ in (\ref{BT}) are approximately solved with the 
aforementioned {\sc hsl-mi20} code. 

We set a limit of 80 {\sc gmres} iterations and 1000 {\sc minres} and {\sc bpcg} iterations. If a solver
reaches the maximum number of iterations, the last computed iterate is used as the next
Newton iterate.
 
As for the nonlinear iteration, in all tests
we set the parameter $c$ in the definition of 
the active-set strategy (\ref{AS-def})  equal to one, and  we use
a null starting guess $x_0$ in the Newton iteration,
which by  (\ref{AS-def}) implies that $\mathcal{A}_0=\emptyset$ in all settings. 
As already mentioned, we used the stopping criterion (\ref{crit}) 
with $\eta_k = \eta_k^E$ in (\ref{ex_crit}) where we further included the safeguard $\tau_s=10^{-10}$ as follows
\begin{eqnarray}\label{ex_crit1}
\|J_k x_{k+1}^{j_*} -f_k\|=  \max \{ \tau_s, \eta_k^E\|J_{k}x^0_{k+1}-f_{k}\| \}, 
\end{eqnarray}
$k \ge 1$, with the tight tolerance $\tau_1 = 10^{-10}$ in (\ref{ex_crit}) \cite{ew2}. 
While the residual 2-norm in (\ref{ex_crit1})
 can be cheaply evaluated for {\sc gmres} when using
right preconditioning, in the case
of {\sc minres} we explicitly computed the (unpreconditioned) residual vector at each
iteration, and then computed its norm; for {\sc minres} we thus slightly
modified the code available in \cite{Elman.Ramage.Silvester.07}.

In the numerical tests in Section \ref{exp_inex} 
we also experimented with the adaptive choice $\eta_k=\eta_k^I$ in
(\ref{in_crit}), with $\tau_2 = 10^{-4}, \tau_3 = 10^{-2}$, together with the above safeguard 
threshold $\tau_s$.  We experimentally verified that this choice of tolerances
preserved the global convergence of the active set Newton procedure.

Concerning the outer iteration, we followed \cite{HerzogSachs2010} and we declare convergence
when the nonlinear residual is sufficiently small, i.e.
$$
\|F(u_k,y_k,p_k,\mu_k)\| \le \tau_f, \qquad {\rm with} \quad \tau_f=10^{-8} . 
$$
We verified that this criterion was equivalent
to terminating the iteration  as soon as the active sets stay unchanged in two consecutive steps as proved in \cite{Bergounioux1999,Meyer2007}. On the contrary, any run performing more than 200
nonlinear iterations is considered a failure and will be denoted with the symbol `-' in the forthcoming tables.

All numerical experiments were performed on a  4xAMD Opteron 850, 2.4GHz, 16GB of RAM using 
Matlab R2012a \cite{Matlab}.\\

\subsection{Numerical results}\label{sec:expes} 
The presentation of the numerical results is organized as follows.
Section \ref{exp_comp} is devoted to the comparison of \ascp and \asbd with 
\asbt (see Section \ref{sec:overview} and (\ref{BT})) on symmetric CC problems. Section \ref{exp_new}
collects the numerical results of the new proposals  \ascp and \asbd on symmetric and
nonsymmetric problems for a variety of problem parameters.
Finally, in 
Section \ref{exp_inex} an {\it inexact} active set approach is considered in the solution of 
nonsymmetric CC problems.

In some cases, a comparative computational analysis is carried out by using
performance profiles for a
given set of test problems and a given selection of algorithms \cite{DolanMore02}.
For a problem $P$ in our testing set and an algorithm $A$, we let ${\tt ti}_{P,A}$
denote the total CPU time employed  to solve problem $P$ using
algorithm $A$ and ${\tt ti}_P$ be  the total CPU time 
employed by the {\it fastest} algorithm to solve problem $P$.
As stated in \cite{DolanMore02},
the CPU time performance profile is defined for algorithm $A$ as
\begin{equation*}
\pi_{A}(\tau)
=\frac {\mbox{number of problems s.t. }{\tt ti}_{P,A} \le \tau \, {\tt ti}_P }    
       {\mbox{number of problems } },\;\;\;\; \tau \ge 1 ,
\end{equation*}
that is the probability\footnote{Or, more precisely, the frequency.} 
for solver $A$ that a performance ratio ${\tt ti}_{P,A}/{\tt ti}_P $ is within a factor 
$\tau$ of the best possible ratio. The function $\pi_{A}(\tau)$ is the (cumulative)
distribution function for the performance ratio.

In the upcoming tables of results the following data will be reported:
the average number of linear inner iterations ({\sc li}),
the number of nonlinear outer iterations ({\sc nli} in brackets), the average elapsed CPU time of
the inner solver ({\sc cpu}), and the total elapsed CPU time ({\sc tcpu}).

Finally, to be able to evaluate the effectiveness of the preconditioned linear solvers, we take as
reference the computational cost of solving the whole system with a sparse direct solver
(``backslash'' in Matlab). For the finest mesh, corresponding to $h=2^{-5}$, the 
{\it compiled} direct solver takes 611 seconds to solve a single linear system with $\mathcal{A}_k=\emptyset$ for
some $k$   ($\nu=10^{-2},\beta=0$). We note that this corresponds to
 the cost of the first iteration when the 
active set Newton algorithm is applied to every problem of the family (\ref{pb_gen}).
For comparison purposes, multiplying
by the number of nonlinear iterations, the total cost of the process when
the inner system is solved with a sparse direct method can be derived.


 \begin{table}[htb]
\begin{center}
\scriptsize
\begin{tabular}{|c|c|rrr|rrr|rrr|}
\hline
           &            &    \multicolumn{ 3}{c}{\ascp} &    \multicolumn{ 3}{|c}{ {\asbd} } & \multicolumn{ 3}{|c|}{\asbt} \\

     $\nu$ &        $p$ &   {\sc li} ({\sc nli}) &        {\sc cpu} &       {\sc tcpu} &   {\sc li} ({\sc nli}) &        {\sc cpu} &       {\sc tcpu} &   {\sc li} ({\sc nli}) &        {\sc cpu} &       {\sc tcpu} \\
\hline 
{$10^{-2}$} &   $2$ &    9.6(3) &       0.1 &       0.2 &      20(3) &       0.1 &        0.2 &   11.3(3) &       0.1 &       0.2 \\
{} &   $3$ &    9.5(4) &       0.8 &       3.2 &    19.5(4) &       1.1 &       4.2 &   10.7(4) &       0.7 &       2.7 \\
{} &   $4$ &    8.5(4) &       1.5 &        8.5 &   18.7(4) &       2.5 &       9.9 &   10.0(4) &       6.7 &      26.8 \\
{} &   $5$ &       8.0(4) &      12.1 &      48.2 &   19.2(4) &      36.1 &      144.4 &     9.5(4) &      17.5 &      69.9 \\
\hline
{$10^{-4}$} &   $2$ &    6.5(7) &       0.1 &       0.11 &   13.8(7) &       0.1 &       0.2 &   17.5(7) &       0.2 &       1.3 \\
{} &   $3$ &  11.2(11) &       0.7 &       8.1 &  23.8(11) &       1.3 &      14.4 &  21.1(11) &       1.3 &      14.7 \\
{} &   $4$ &  10.7(17) &       1.8 &      30.1 &  23.5(17) &       3.1 &      51.6 &  18.0(17) &       4.7 &      80.1 \\
{} &   $5$ &  10.3(15) &      16.1 &      241.4 &  24.3(15) &      31.6 &     474.3 &  18.2(15) &      30.9 &     463.3 \\
\hline 
{$10^{-6}$} &   $2$ &   10.3(9) &       0.1 &       0.2 &   22.7(9) &       0.1 &       0.35 &   41.1(9) &       0.1 &       0.7 \\
{} &   $3$ &  16.0(19) &       1.1 &      21.5 &  34.6(19) &       1.9 &      35.8 &     99.0(19) &       6.1 &     115.6 \\
{} &   $4$ &  17.6(54) &       2.9 &      160.7 &  44.9(54) &       5.7 &     289.8 &  93.5(54) &      13.6 &     735.6 \\
{} &   $5$ &  22.0(68) &      38.4 &    2608.4 &  56.3(89) &      63.2 &    5627.2 & 102.1(68) &     136.7 &    9293.6 \\
\hline
{$10^{-8}$}    &   $2$ &   11.1(9) &       0.1 &       0.2 &   25.4(9) &       0.1 &       0.4 &   58.6(9) &       0.1 &       1.0 \\
 &   $3$ &   18.3(27) &       0.7 &      20.2 &  40.1(27) &       2.1 &      57.6 & 133.2(27) &       8.3 &     224.1 \\
 &   $4$ &   30.3(74)	& 7.3	& 540.5 &  72.1(66) &       9.2 &     513.4 & 385.0(66) &      60.1 &    3962.8 \\
 &   $5$ &    - & - & - &    - & - & - &   - & - & - \\
\hline
\end{tabular}  
\caption{Comparison among {\sc as-gmres-ipf}, {\sc as-minres-bdf} and {\sc as-bpcg-bt}. Test problem {\tt CC-Pb1} 
for a variety of {\color{black}$h=2^{-p}$} and $\nu$ ($L$ symmetric, i.e., $\beta=0$).}\label{tab1}
\end{center}
\end{table}

 \begin{table}[htb]
\begin{center}
\scriptsize
\begin{tabular}{|c|c|rrr|rrr|rrr|}
\hline
           &            &    \multicolumn{ 3}{c}{\ascp} &    \multicolumn{ 3}{|c}{ {\asbd} } & \multicolumn{ 3}{|c|}{\asbt} \\

      $\nu$ &        $p$ &   {\sc li} ({\sc nli}) &        {\sc cpu} &       {\sc tcpu} &   {\sc li} ({\sc nli}) &        {\sc cpu} &       {\sc tcpu} &   {\sc li} ({\sc nli}) &        {\sc cpu} &       {\sc tcpu} \\
\hline
{$10^{-2}$} &   $2$ &    8.75(4) &       0.1 &       0.2 &      18(4) &       0.1 &       0.2 &      10.0(4) &       0.03 &       0.10 \\

{} &   $3$ &       8.0(5) &       0.2 &       0.8 &    16.8(5) &       0.9 &       4.7 &       9.0(5) &       0.2 &       0.99 \\

{} &   $4$ &     7.4(5) &       1.3 &       6.5 &    16.2(5) &       2.2 &      11.1 &     9.2(5) &       1.8 &       8.77 \\

{} &   $5$ &     7.4(5) &      11.3 &      56.4 &    16.6(5) &      19.4 &      96.7 &   8.4(5)	& 15.2 &	76.1  \\
\hline
{$10^{-4}$} &   $2$ &   11.1(9) &       0.1 &       0.2 &   23.2(9) &       0.1 &       0.4 &   28.4(7) &       0.1 &       0.6 \\

{} &   $3$ &  12.9(13) &       0.3 &       3.8 &  27.7(13) &       1.5 &      19.8 &  24.4(13) &       0.5 &       6.4 \\

{} &   $4$ &  13.0(14) &       2.1 &      29.5 &  28.7(14) &       3.7 &      52.1 &  20.0(14) &       3.6 &      50.1 \\

{} &   $5$ &  11.7(13) &      18.5 &     240.8 &  27.5(13) &      32.0 &     416.1 &   20.5(13)	& 28.2 &	367.5  \\
\hline
{$10^{-6}$} &   $2$ &  12.2(12) &       0.1 &       0.3 &  26.6(12) &       0.1 &       0.5 &  52.2(12) &       0.1 &       1.3 \\

{} &   $3$ &  16.8(22) &       0.4 &       8.8 &  36.8(22) &       2.0 &      44.6 & 115.5(22) &       2.1 &      46.7 \\

{} &   $4$ &  18.2(35) &       3.1&     106.9 &  43.5(36) &       5.7 &     204.8 & 118.5(35) &      19.3 &     675.2 \\

{} &   $5$ &  20.0(41) &      34.6 &    1416.9 &  52.5(53) &      59.5 &    3151.9 &      84.3(40)	& 109.2	& 4367.1
 \\
\hline
{$10^{-8}$} &   $2$ &  10.4(11) &       0.1 &       0.2 &  23.2(11) &       0.1 &       0.4 &  64.3(11) &       0.1 &       1.6 \\

{} &   $3$ &  15.7(19) &       0.4 &       8.2 &  35.5(19) &       1.9 &      37.1 & 195.1(19) &       3.8 &      71.2 \\

{} &   $4$ &  27.6(55) &       5.3 &     289.1 &     69.0(63) &       9.1 &     572.0 &360.5(54) &      55.9 &    3021.7 \\

{} &   $5$ & 41(156) &      90.7 &      14156.0 &   - & - & -&  343.3(131)	& 438.2 &	57406.9\\
\hline
\end{tabular}  
\caption{Comparison among {\sc as-gmres-ipf}, \asbd and {\sc as-bpcg-bt}. Test with {\tt CC-Pb2} for a variety of 
{\color{black}$h=2^{-p}$} and $\nu$ ($L$ symmetric, i.e., $\beta=0$).}\label{tab1_bis}
\end{center}
\end{table}

\subsubsection{Comparison with the BPCG approach} \label{exp_comp}
In order to make comparisons with \asbt in the setting 
used in \cite{StollWathen2012}, we 
restrict our testing set to symmetric CC problems {\tt CC-Pb1} and {\tt CC-Pb2}
with $\beta=0$.
Numerical results are reported in Tables \ref{tab1} and \ref{tab1_bis}.
The number of nonlinear iterations remains quite low for most choices of the
parameters, except for the finest grid and the limit case $\nu=10^{-8}$.
All methods seem to show some $\nu$-dependence both in the (inner) linear
solver, and in the (outer) nonlinear iteration; however, while in both
problems for \ascp and \asbd such dependence is rather mild, this is significantly
more evident for {\sc as-bpcg-bt}. Large values of {\sc li} for \asbt in the tables
correspond to runs where the maximum number of inner iterations is reached.
This shortcoming makes \asbt not competitive in almost
all parameter combinations, with timings that differ significantly from the
other methods, up to at most one order of magnitude.
Finally, we recall that at each iteration \ascp and \asbd solve linear systems 
of dimension $3 n_h + n_{{\mathcal{A}}_k}$, whereas
\asbt solves systems of fixed dimension $3 n_h$. 
The numbers in Tables \ref{tab1} and \ref{tab1_bis} 
 show that an appropriate explicit 
treatment of the active-set information within the preconditioner is capable of
making up for the larger problem size, yielding an overall significant gain in CPU time.

\begin{remark}
{\rm\color{black}
For the sake of completeness, we also solved the problem with the block triangular
preconditioner suggested in \cite{StollWathen2012}, with {\sc gmres} as a solver
instead of {\sc bpcg}. Results are reported in Table \ref{tab1_2_gmres} for a
selection of parameters and for both problems. The results do not differ from
those showed in the previous tables, indicating that the chosen linear solver is
not responsible for the unsatisfactory performance of the preconditioned iteration.
Because of the use of {\sc gmres}, memory requirements are clearly superior to those
for {\sc bpcg}.

\begin{table}[htb]
{\color{black}
\begin{center}
\footnotesize
\begin{tabular}{|c|c|rrr|rrr|}
\hline
           &            &    \multicolumn{6}{c|}{ \sc as-gmres-bt } \\
\hline
           &            &    \multicolumn{ 3}{c|}{\tt CC-Pb1} &    \multicolumn{ 3}{c|}{\tt CC-Pb2} \\           
      $\nu$ &        $p$ &   {\sc li} ({\sc nli}) &        {\sc cpu} &       {\sc tcpu} &   {\sc li} ({\sc nli}) &        {\sc cpu} &       {\sc tcpu} \\
\hline
{$10^{-2}$} &   $2$ & 10.7(4) & 0.1  & 0.4  & 9.25(4) & 0.06 & 0.3 \\

{} &   $3$ &  9.8(4)  &  1.3  & 5.1  &  7.8(5) & 0.3  &  1.6  \\

{} &   $4$ &  8.5(4)  &  7.9  &  31.9 &  7.6(5)  & 2.4  & 12.2   \\

{} &   $5$ & 8.5(4)  & 19.5 & 77.9  &  7.2(5) &  17.5  & 87.7  \\
\hline
{$10^{-4}$} &   $2$ & 14.1(7)  &  0.3  & 2.2  &  22.0(9)  &  0.1   &  0.8  \\

{} &   $3$ &  16.5(11) & 1.8 &  20.2  & 19.9(13) & 0.7 & 8.8 \\

{} &   $4$ &  14.2(17) &  4.2  &  71.5  &  16.5(14) &  4.2  & 59.0  \\

{} &   $5$ &  13.9(15) &   30.2   &  452.5   &  15.8(13) &  33.4  & 433.6 \\
\hline
{$10^{-6}$} &   $2$ &  25.1(9)  &  0.1   &  0.9  & 31.3(12)  &  0.1  & 1.7  \\

{} &   $3$ & 50.6(19)  & 6.6  & 125.7  & 55.6(22) & 3.5  & 77.6  \\

{} &   $4$ & 46.9(54) & 15.0  & 810.9  & 55.3(35) & 19.8 & 691.2 \\

{} &   $5$ & 49.7(68) & 141.5  & 9621.7 & 41.1(40)  &  117.5   &  4700.1  \\
\hline
{$10^{-8}$} &   $2$ & 27.9(9)  &  0.1  & 1.1  & 37.2(11)  &  0.2  &  2.1  \\

{} &   $3$ & 56.9(27)  & 8.6 &  232.0  &  75.4(19) &  7.6  &  144.7 \\

{} &   $4$ & 115.5(74)  &  56.7  &  4134.5   & 164.8(54)  &  109.9  & 5932.9 \\

{} &   $5$ & - & -  &  -  & 102.1(120)  & 507.7 & 60927.8 \\
\hline
\end{tabular}  
\caption{\color{black}Performance results for {\sc as-gmres-bt}. Test with {\tt CC-Pb1} and {\tt CC-Pb2} for a variety of 
{\color{black}$h=2^{-p}$} and $\nu$ ($L$ symmetric, i.e., $\beta=0$).\label{tab1_2_gmres}}
\end{center}
}
\end{table}


}
\end{remark}

\subsubsection{Dependence on the problem parameters} \label{exp_new}
We tested the new preconditioners on CC, MC and SC problems  by
analyzing their dependence on the parameters of the discretized problem, i.e. the regularization parameter $\nu$,
the convection coefficient $\beta$, the mesh size $h$
and, for the MC case, the regularization parameter $\epsilon$.

 \begin{table}[htb]
\begin{center}
\footnotesize
\begin{tabular}{|c|c|rr|rr|rr|rr|}
\hline
                    &            & \multicolumn{ 2}{c|}{$\nu=10^{-2}$} & \multicolumn{ 2}{c|}{$\nu=10^{-4}$} & \multicolumn{ 2}{c|}{$\nu=10^{-6}$} & \multicolumn{ 2}{c|}{$\nu=10^{-8}$} \\

   $\beta_1$ &        $p$ &    {\sc li}({\sc nli}) &       {\sc tcpu} &    {\sc li}({\sc nli}) &       {\sc tcpu} &    {\sc li}({\sc nli}) &       {\sc tcpu} &    {\sc li}({\sc nli}) &       {\sc tcpu} \\
\hline
     0 &       $2$ &    9.6(3) &       0.2 &    6.5(7) &       0.1 &   10.3(9) &       0.2 &   11.1(9) &       0.2 \\

           &   $3$ &    9.5(4) &       3.2 &  11.2(11) &       8.0 &  16.0(19) &      21.5 &   18.3(27) &      20.2 \\

           &   $4$ &    8.5(4) &        8.5 &  10.7(17) &      30.1 &  17.6(54) &      160.7 &  30.3(74) &     540.5 \\

           &   $5$ &       8.0(4) &      48.2 &  10.3(15) &      241.4 &  22.0(68) &    2608.4 & - & -  \\
\hline
        10 &   $2$ &       9.0(3) &       0.1&    8.3(10) &       0.2 &   10.4(10) &       0.3 &  11.3(10) &       0.3 \\

           &   $3$ &     8.5(4) &       0.7 &  10.5(13) &          3.0 &  15.4(18) &        6.8 &  19.8(19) &      10.7 \\

           &   $4$ &     8.5(4) &        6.1 &  10.8(13) &      25.3 &  18.6(41) &     135.9 & 23.8(109) &     509.1 \\

           &   $5$ &       8.0(4) &      53.6 &     11.0(15) &     277.6 &  20.9(47) &     1810.7* & 36.9(164) &  13203.7* \\
\hline
       $10^2$ &   $2$ &       5.0(3) &       0.1 &       7.0(4) &       0.1 &      10.0(6) &       0.1 &   13.7(8) &       0.3 \\

           &   $3$ &       6.0(3) &       0.4 &     9.6(5) &        0.9 &  12.3(12) &       3.5 &  23.7(19) &       12.9 \\

           &   $4$ &    5.3(3) &       2.9 &    8.8(6) &       8.9 &  15.1(14) &      41.4 &  34.3(46) &     337.9 \\

           &   $5$ &    7.3(3) &      40.3 &      10.0(6) &      108.2 & 14.4(19) &  690.5* &  40.0(81) &   8383.7* \\
\hline
      $10^3$ &   $2$ &       3.0(2) &       0.1 &     4.5(2) &       0.1 &       6.0(4) &       0.1 &    8.8(6) &       0.2 \\

           &   $3$ &       4.0(2) &       0.2 &       5.0(2) &       0.2 &    5.8(6) &       0.8 &  16.3(18) &       7.6 \\

           &   $4$ &     4.5(2) &       1.7 &     6.5(2) &       2.4 &    8.1(6) &       9.1 &  18.0(14) &      53.2 \\

           &   $5$ &     4.5(2) &      29.3 &    5.6(3) &      53.5 &    7.2(7) &      156.5 & 25.0(26) &    2517.2* \\      
\hline
\multicolumn{10}{c}{   } \\
\hline
                    &            & \multicolumn{ 2}{c|}{$\nu=10^{-2}$} & \multicolumn{ 2}{c|}{$\nu=10^{-4}$} & \multicolumn{ 2}{c|}{$\nu=10^{-6}$} & \multicolumn{ 2}{c|}{$\nu=10^{-8}$} \\

   $\beta_1$ &        $p$ &    {\sc li}({\sc nli}) &       {\sc tcpu} &    {\sc li}({\sc nli}) &       {\sc tcpu} &    {\sc li}({\sc nli}) &       {\sc tcpu} &    {\sc li}({\sc nli}) &       {\sc tcpu} \\
\hline
      0 &      $2$ &    8.7(4) &       0.2 &   11.1(9) &       0.2 &  12.1(12) &       0.4 &  10.4(11) &       0.3 \\

           &   $3$ &       8.0(5) &       0.8 &  12.9(13) &       3.8 &  16.8(22) &       8.8 &  15.7(19) &       8.2 \\

           &   $4$ &     7.4(5) &       6.5 &  13.0(14) &      29.5 &  18.2(35) &     106.9 &  27.6(55) &     289.1 \\

           &   $5$ &     7.4(5) &      56.4 &  11.7(13) &     240.8 &  20.0(41) &    1416.9 & 41.0(156) &      14156.0 \\
\hline
        10 &   $2$ &       8.0(4) &       0.1 &   10.6(10) &       0.3 &   13.8(15) &       0.5 &  15.1(15) &       0.6 \\

           &   $3$ &       8.0(4) &       0.7 &  13.1(12) &       3.5 &  19.3(31) &      14.7&  24.6(30) &      21.5 \\

           &   $4$ &     6.4(5) &       6.7 &   13.5(12) &      28.7 &  20.6(46) &      167.6 &  34.1(67) &     449.5 \\

           &   $5$ &     6.6(5) &      59.1 &  12.3(13) &     273.1 &  21.7(58) &    2291.8 & 41.4(162) &  15118.4*  \\
\hline
       $10^2$ &   $2$ &     4.5(2) &       0.1 &    9.8(6) &       0.2 &  12.3(10) &       0.3 &   15.5(12) &       0.5 \\

           &   $3$ &    4.3(2) &       0.3 &   10.3(6) &       1.1 &  15.7(16) &       5.2 &  27.2(24) &       18.5 \\

           &   $4$ &    4.6(3) &       2.7 &    9.6(6) &       9.7 &  18.4(20) &      61.5 &  33.2(47) &     321.4 \\

           &   $5$ &    5.6(3) &      69.3 &    8.5(7) &     196.7 &  17.6(23) &     907.8 &  34.3(82) &    7428.8 \\
\hline
      $10^3$ &   $2$ &       3.0(2) &       0.1 &     5.0(3) &       0.1 &   10.1(7) &       0.2 &   13.6(9) &       0.3 \\

           &   $3$ &     2.5(2) &       0.1 &       5.0(3) &       0.3 &   10.1(7) &       1.4 &  22.3(12) &       6.8 \\

           &   $4$ &     2.5(2) &       0.2 &     4.5(4) &       3.7&    9.5(7) &       12.1 &  21.1(15) &      58.9 \\

           &   $5$ &       3.0(2) &       25.9 &    4.2(4) &      66.4 &    8.2(8) &     237.4 &  19.3(17) &  2277.6*  \\      
\hline
\end{tabular}  
\caption{\ascp for a variety of values for {\color{black}$h=2^{-p}$, $\nu$ and $\beta=(\beta_1,0,0)$}.
The symbol `*' denotes runs where an  {\sc hsl-mi20} warning occurred; in some of these cases, much larger timings were observed.
Top: {\tt CC-Pb1}. Bottom: {\tt CC-Pb2}.
} \label{tabcpf_pb2}
\end{center}
\end{table}

 \begin{table}[htb]
\centering
\scriptsize
\begin{tabular}{|c|c|rr|rr|rr|rr|}
\hline
                    & $p$        & \multicolumn{ 2}{c|}{$\nu=10^{-2}$} & \multicolumn{ 2}{c|}{$\nu=10^{-4}$} & \multicolumn{ 2}{c|}{$\nu=10^{-6}$} & \multicolumn{ 2}{c|}{$\nu=10^{-8}$} \\

   $\beta_1$ &       &    {\sc li}({\sc nli}) &       {\sc tcpu} &    {\sc li}({\sc nli}) &       {\sc tcpu} &    {\sc li}({\sc nli}) &       {\sc tcpu} &    {\sc li}({\sc nli}) &       {\sc tcpu} \\
\hline
 
         0 &   $2$ &      20.0(3) &        0.2 &   13.8(7) &        0.2 &   22.7(9) &        0.4 &   25.4(9) &        0.4 \\

           &   $3$ &    19.5(4) &        4.2 &  23.8(11) &       14.4 &  34.6(19) &       35.8 &  40.1(27) &       57.6 \\

           &   $4$ &   18.7(4) &        9.9 &  23.5(17) &       51.6 &  44.9(54) &      289.8 &  72.1(66) &      513.4 \\

           &   $5$ &   19.2(4) &      144.4 &  24.3(15) &      474.3 &  56.3(89) &     5627.2 &          - &          - \\
\hline
        10 &   $2$ &   18.3(3) &        0.1 &  18.3(10) &        0.3 &  25.3(10) &        0.5 &     29.0(10) &        0.5 \\

           &   $3$ &   17.7(4) &        4.2 &  24.6(13) &       19.0 &  37.7(18) &       39.7 &  50.6(19) &       56.7 \\

           &   $4$ &   17.7(4) &       10.8 &  26.5(13) &       40.0 &  53.7(33) &      245.7 &  87.7(42) &      500.7 \\

           &   $5$ &   19.2(4) &       98.4 &  29.5(15) &      550.6 &  63.1(74) &     5572.9 & 174.1(146)$\dagger$	& $>5h$*$\dagger$  \\ 
\hline
       $10^2$ &   $2$ &   10.5(2) &        0.1 &      14.0(4) &        0.1 &   20.5(6) &        0.2 &    31.5(8) &        0.4 \\

           &   $3$ &   11.6(3) &        1.8 &   20.2(5) &        5.2 &  27.1(12) &       17.4 &  53.5(19) &       54.3 \\

           &   $4$ &   11.6(3) &        5.3 &   20.5(6) &       17.8 &     37.0(14) &       72.8 &  93.5(40) &      546.5 \\

           &   $5$ &    98.3(3)	& 737.3 &	28.5(6) &	382.7	& 74.8(19)$\dagger$	& 2988.0*$\dagger$ &180.2(80)$\dagger$	& $>5h$*$\dagger$\\ 
\hline
      $10^3$ &   $2$ &     6.5(2) &        0.1 &     8.5(2) &        0.1 &    11.5(4) &        0.1 &    17.5(6) &        0.2 \\

           &   $3$ &     7.5(2) &        0.9 &    10.5(2) &        1.1 &   11.8(6) &        4.0 &      29.0(8) &       13.5 \\

           &   $4$ &     9.5(2) &        3.1 &    13.5(2) &        4.3 &   16.8(6) &       15.6 &  41.1(10) &       61.7 \\

           &   $5$ &    9.5(2)	& 79.4 &	11.6(3) &	146.3 &	19.0(7) &	549.3 &	42.1(16) &	3036.1* \\
\hline
%
%
\multicolumn{10}{c}{   } \\
\hline
                    &    $p$  & \multicolumn{ 2}{c|}{$\nu=10^{-2}$} & \multicolumn{ 2}{c|}{$\nu=10^{-4}$} & \multicolumn{ 2}{c|}{$\nu=10^{-6}$} & \multicolumn{ 2}{c|}{$\nu=10^{-8}$} \\

   $\beta_1$ &       &    {\sc li}({\sc nli}) &       {\sc tcpu} &    {\sc li}({\sc nli}) &       {\sc tcpu} &    {\sc li}({\sc nli}) &       {\sc tcpu} &    {\sc li}({\sc nli}) &       {\sc tcpu} \\
\hline
0 &   $2$ &      18.0(4) &        0.2 &   23.2(9) &        0.4 &  26.5(12) &        0.6 &  23.1(11) &        0.5 \\
           &   $3$ &    16.8(5) &        4.7 &  27.7(13) &       19.8 &  36.8(22) &       44.6 &  35.5(19) &       37.1 \\
           &   $4$ &    16.2(5) &       11.1 &  28.7(14) &       52.1 &  43.5(36) &      204.8 &     69.0(63) &      572.0 \\
           &   $5$ &    16.6(5) &       96.7 &  27.5(13) &      416.1 &  52.5(53) &     3152.0 & 123(133)$\dagger$	&$ >5h\dagger$	\\ 
\hline
        10 &   $2$ &   16.7(4) &        0.1 &   23.0(10) &        0.4 &  32.4(15) &        0.9 &  37.4(15) &        1.0 \\
           &   $3$ &    16.5(4) &        4.0 &  30.6(12) &       21.7 &  52.2(30) &       90.8 &  70.3(30) &      122.1 \\
           &   $4$ &   14.2(5) &       10.8 &     32.0(12) &       54.6 &  63.7(47) &      409.8 & 108(79) &     1151.1 \\
           &   $5$ &   14.8(5) &       98.8 &     33.0(13) &      533.3 & 78.4(100) &    $>5h$&  194(159)$\dagger$ & $>5h$*$\dagger$ \\ 
\hline
       $10^2$ &   $2$ &     9.5(3) &        0.1 &   20.6(6) &        0.2 &  27.1(10) &        0.4 &  35.6(12) &        0.7 \\
           &   $3$ &       9.0(3) &        1.5 &      22.0(6) &        7.0 &  37.3(16) &       31.5 &  66.2(25) &       86.9 \\
           &   $4$ &    9.6(3) &        4.7 &   22.1(6) &       19.1 &  47.1(20) &      138.3 &  90.9(56) &      744.5 \\
           &   $5$ &  226(3) &	1206.2 &112.5(7) &	1411.8	& 109.2(23)$\dagger$ &	4762.4$\dagger$ & -	& - \\
\hline
      $10^3$ &   $2$ &     5.5(2) &        0.1 &   10.3(3) &        0.1 &      21.0(7) &        0.3 &   28.4(9) &        0.5 \\
           &   $3$ &     5.5(2) &        0.6 &   10.3(3) &        1.8 &   20.4(7) &        7.7 &  49.6(11) &       31.3 \\
           &   $4$ &     5.5(2) &        2.3 &      10.0(4) &        6.6 &   19.5(7) &       21.2 &    63.8(15)$\dagger$	&     699.3$\dagger$	\\
           &   $5$ &   6.5(2)	& 71.1 &	8.0(5)	& 8.3& 25.5(8)$\dagger$	& 828.3$\dagger$ &	67.7(26)$\dagger$	& 7897.7$\dagger$	 \\
\hline
\end{tabular}  
 \caption{\asbd for a variety of values for {\color{black}$h=2^{-p}$, $\nu$ and $\beta=(\beta_1,0,0)$}.
The symbol `*' denotes runs where an MI20 warning occurred; in some of these cases, much larger timings were observed 
($>5h$ means that the CPU is larger  than 5 hours).
 Top: {\tt CC-Pb1}. Bottom: {\tt CC-Pb2}.  \label{tabbdf_pb2} }
\end{table}

\vskip 0.04in
\begin{example}\label{ex:1}
{\rm
For the CC problems, we varied
$h \in \{2^{-2},2^{-3},2^{-4},10^{-5}\},\quad \nu \in \{10^{-2},10^{-4},10^{-6},10^{-8}\},$ and
we set
$\beta = \left (\beta_1,0,0 \right)$ with $\beta_1\in \{0,10,100,1000\}$.
We remark that $\nu=10^{-8}$ was included for completeness, however it will be considered
as a limit case because it is rather small. Analogously, $\beta_1=1000$ makes the
operator very convection-dominated, providing anomalous behaviors in some exceptional
cases; we did not explore whether for this extreme value of $\beta_1$
the upwind discretization was sufficient in these cases to
damp the well-known numerical instabilities arising in the discretization phase.  In fact, the
value $\beta_1=1000$ was only considered for consistency with respect to
the experiments carried out in \cite{HerzogSachs2010}. In the same lines, we prefer to limit
our speculations on the dependence with respect to $\beta$ to the empirical level, as a deeper analysis
would require a thorough discussion of both the discretization strategy and the employed convection;
this is clearly beyond the scope of this paper.

We collect the results obtained with  \ascp  and \asbd for the problems {\tt CC-Pb1} and {\tt CC-Pb2} 
in Tables  \ref{tabcpf_pb2}-\ref{tabbdf_pb2} and the corresponding total CPU time performance
profile is displayed in Figure \ref{perfprof_cpfbdf} (left plot) varying all the
parameters for a total of 128 runs.  The average  number of inner iterations is quite
homogeneous with respect to $h$ and slightly dependent on $\nu$ and $\beta$.
A comparison of Tables \ref{tabcpf_pb2} and \ref{tabbdf_pb2} shows that 
the number of nonlinear iterations is quite different between \ascp and \asbd
when $h$ is small and $\nu\in \{10^{-6},10^{-8}\}$. For these values the preconditioner
in \asbd is rather ill-conditioned and its performance deteriorates. In this case, the Newton steps computed 
with the two preconditioned solvers, using the stopping criterion
(\ref{ex_crit1}), might differ so greatly that different convergence histories take place.  
Unfortunately, this resulted in the \asbd failure  in 10 instances. We recovered 9 over 10 failures by imposing
the stricter tolerances
$\tau_s=\tau_1=10^{-12}$ in (\ref{ex_crit1}) (this runs are marked with the symbol `$\dagger$' in 
Table \ref{tabbdf_pb2}). A few unexpected large 
values of {\sc li} can still be observed in Table~\ref{tabbdf_pb2} for $\beta=100$ and $h=2^ {-5}$,
which can be presumably ascribed to an inaccuracy of the multigrid operator. 

The superiority of \ascp is also evident in the left plot of Figure \ref{perfprof_cpfbdf}, which
reveals that \ascp is much more efficient than \asbd in terms of total CPU time
and that in the 55\% of the runs, the CPU time employed by \asbd is  within
a factor 2 of the time employed by {\sc as-gmres-ipf}.

Finally, for the sake of completeness, we also carried out experiments on {\tt CC-pb1}
using the {\sc agmg} algebraic multigrid operator \cite{Notay2012,AGMG} in place of the 
{\sc hsl-mi20} in the solution of systems with $L_1$. The implementation of 
{\sc agmg} requires the use of the ``flexible'' variant of the linear system solver since the application of
multigrid preconditioner is the result of an iterative process and therefore it changes step by step \cite{Saad-book}.
Table \ref{tab_agmg} shows the results obtained using Flexible {\sc gmres} ({\sc fgmres}) in combination with
{\sc hsl-mi20} (first two columns) and {\sc agmg} (last two columns) in the application of the 
$\pc$ preconditioner. We only report experiments with 
$\nu\in \{10^{-6},10^{-8}\}$, as for larger values the performance  with the two 
multigrid preconditioners is very similar.  For $\nu=10^{-6}$ the overall performance in terms of
CPU time is still somewhat comparable, whereas it is clearly in favor of {\sc agmg} in the extreme
case $\nu=10^{-8}$. On the other hand, the average number of iterations ({\sc li}) with {\sc hsl-mi20} is in
general lower, showing that the latter preconditioner is more effective in terms of approximation
properties, but more expensive to apply.
}
\end{example}

 \begin{table}[htb]
\begin{center}
\footnotesize
\begin{tabular}{|r|c|rr|rr|rr|rr|}
\hline
    &            & \multicolumn{ 4}{c|}{ {\sc as-fgmres-ipf} with {\sc hsl-mi20}} & \multicolumn{ 4}{c|}{{\sc as-fgmres-ipf} with {\sc agmg}} \\
    \hline
                    &            & \multicolumn{ 2}{c|}{$\nu=10^{-6}$} & \multicolumn{ 2}{c|}{$\nu=10^{-8}$} & \multicolumn{ 2}{c|}{$\nu=10^{-6}$} & \multicolumn{ 2}{c|}{$\nu=10^{-8}$} \\

   $\beta_1$ &        $p$ &    {\sc li}({\sc nli}) &       {\sc tcpu} &    {\sc li}({\sc nli}) &       {\sc tcpu} &    {\sc li}({\sc nli}) &       {\sc tcpu} &    {\sc li}({\sc nli}) &       {\sc tcpu} \\
      \hline
         0 &   $2$ &   10.3(9) &       0.3 &   11.1(9) &       0.4 &      17.0(9) &       0.3 &   18.7(9) &       0.3 \\
           &   $3$ &  16.0(19) &      21.3 &   18.3(27) &      36.4 &  23.7(19) &      12.3 &  28.5(27) &      27.4 \\
           &   $4$ &  17.6(54) &     223.6 &  28.5(57) &     351.6 &  28.5(54) &     157.4 &  41.7(57) &      286.7 \\
           &   $5$ &  21.9(68) &    2794 & 38.7(190) &    17457.3 &  44.8(68) &     3766.9 &  53.3(189) &    14250 \\
\hline
        10 &   $2$ &   10.4(10) &       0.2 &  11.3(10) &       0.4 &     19.0(10) &       0.2 &   20.7(10) &       0.3 \\
           &   $3$ &  15.4(18) &      20.9 &  19.7(19) &      31.1 &     25.0(18) &      11.5 &  31.7(19) &      23.1\\
           &   $4$ &  18.6(41) &      132.6 &  26.4(42) &     239.4 &  25.0(41) &      79.1 &  35.4(42) &     272.3 \\
           &   $5$ &  20.9(47) &    1938 & 37.9(138) &  12582* &   39.3(47) &    1951.3 & 50.2(147) &    10105 \\
\hline
    $10^2$ &   $2$ &      10.0(6) &       0.1 &   13.7(8) &       0.5 &      16.0(6) &       0.1 &   20.3(8) &       0.3 \\
           &   $3$ &  12.3(12) &      10.5 &  23.7(19) &      36.5 &   20.5(12) &       7.5 &  31.5(19) &       24.4 \\
           &   $4$ &  15.1(14) &      39.8 &  36.8(34) &        321.0 &   28.7(14) &      37.6 &  39.4(34) &     153.3 \\
           &   $5$ & 14.4(19) &    776* &  38.8(82) &    9172* &   39.2(19) &      755.6 &  50.2(81) &    5658 \\
\hline
    $10^3$ &   $2$ &       6.0(4) &       0.1 &    8.8(6) &       0.37 &    8.2(4) &       0.1 &   12.3(6) &       0.2 \\
           &   $3$ &    5.8(6) &       2.1 &   14.1(8) &       9.3 &    9.8(6) &       1.2 &   18.7(8) &       5.1 \\
           &   $4$ &    8.1(6) &       8.5 &  20.1(10) &      43.2 &      12.0(6) &       5.1 &   26.3(10) &      31.6 \\
           &   $5$ &    7.2(7) &     150.1 &  19.6(16) &   1660* &   13.1(7) &      43.9 &  26.6(16) &     600.6 \\
\hline
\end{tabular}  
\caption{{\sc as-fgmres-ipf} (flexible variant) using {\sc hsl-mi20} 
(left) and {\sc agmg} (right) for a variety of values of $h$ and $\beta$, and small values of $\nu$.
The symbol `*' denotes runs where an {\sc hsl-mi20} warning occurred; Test problem {\tt CC-Pb1}. 
} \label{tab_agmg}
\end{center}
\end{table}

\begin{example}\label{ex:beta}
{\rm
 We further investigate the reliability of our proposals considering problem {\tt CC-pb2} with 
 the following nonconstant convection parameter 
\begin{equation}\label{betan}
\beta(x,y,z)  =  \left ( \begin{array}{c}
  - 2x(1-x)(2y-1)z \\
  (2x-1)y(1-y) \\
    (2x-1)(2y-1)z(1-z)  \end{array} \right) ;
\end{equation}
see example 3D1 in \cite{Notay2012}. The performance of \ascp and \asbd
is analogous to that showed in Tables~\ref{tabcpf_pb2}-\ref{tabbdf_pb2} for the constant and 
unidirectional $\beta=(\beta_1,0,0)$; a sample of this behavior for \ascp  is reported in Table~\ref{tab_notay}
as $\nu$ and {\color{black}$h-2^{-p}$} vary.
}
\end{example}

 \begin{table}[htb]
\begin{center}
\footnotesize
\begin{tabular}{|c|rr|rr|rr|rr|}
\hline
          & \multicolumn{ 8}{c|}{ {\sc as-gmres-ipf} on {\tt CC-Pb2} with convection (\ref{betan})} \\
    \hline
                          & \multicolumn{ 2}{c|}{$\nu=10^{-2}$} & \multicolumn{ 2}{c|}{$\nu=10^{-4}$} & \multicolumn{ 2}{c|}{$\nu=10^{-6}$} & \multicolumn{ 2}{c|}{$\nu=10^{-8}$} \\

        $p$ &    {\sc li}({\sc nli}) &       {\sc tcpu} &    {\sc li}({\sc nli}) &       {\sc tcpu} &    {\sc li}({\sc nli}) &       {\sc tcpu} &    {\sc li}({\sc nli}) &       {\sc tcpu} \\
\hline
  $2$ &     5.0(3) &        0.3 &     9.0(8) &        0.4 &   12.4(19) &        0.5 &  14.5(19) &        0.7 \\
  $3$ &     5.0(3) &        2.2 &     9.4(8) &        5.4 &   15.1(32) &       34.2 &   20.1(41) &       66.9 \\
  $4$ &     4.7(3) &        5.9 &     8.3(9) &       14.6 &   15.4(39) &      133.2 &   26.5(92) &      528.8 \\
  $5$ &     5.0(3) &       42.2 &     7.8(9) &      139.9 &   14.9(36)         &  1089.0   &   28.6(137)         &    9643.7   \\
\hline
\end{tabular}
\caption{{\sc as-gmres-ipf} on problem {\tt CC-Pb2} with convection $\beta$ given in (\ref{betan}). 
} \label{tab_notay}
\end{center}
\end{table}

\begin{example}\label{ex:2}
{\rm
For the MC and SC problems, we considered
$h \in \{2^{-2},2^{-3},2^{-4}\}$, $\nu \in \{10^{-2},10^{-4},10^{-6},10^{-8}\}$,
$\beta = \left (\beta_1,0,0 \right)$ with $\beta_1\in \{0,10,100,1000\}$,
and 
$\epsilon \in \{10^{-1},10^{-2},10^{-3},10^{-4},10^{-8},0 \}$,
where the values $\epsilon \in \{10^{-8},0 \}$ are included to comprise 
the SC problems. We thus obtained a set of 288 runs.
The numerical results for these problems do not significant differ from those of the
CC problem, at least for the larger values of $\epsilon$ in the set. 
Therefore, to avoid proliferation of tables, we prefer not to include them, and
report instead the overall performance profile in the right plot of
Figure~\ref{perfprof_cpfbdf}.
For all considered runs,
the profile clearly shows that \ascp is the fastest in the 96\% of the runs 
and that \asbd is within a factor 2 of \ascp for the majority (93\%) of the runs.

\begin{table}[htb]
\scriptsize
\begin{center}
\begin{tabular}{c|cccccc c c|cccccc}
\multicolumn{ 7}{c}{ $\beta_1=10$} &  & \multicolumn{ 7}{c}{ $\beta_1=100$} \\
  $\epsilon$         &  -1 &  -2 &  -3 &  -4 &    -8 &                          $-\infty$&            &     $\epsilon$         &  -1 &  -2 &  -3 &  -4 &    -8 &  $-\infty$   \\
  $\nu$         &     &     &     &     &       &                      &        &         $\nu$         &   &   &   &   &     &     \\
	 \cmidrule{1-7} 	 \cmidrule{9-15}  
        -2 &         \cellcolor{black!15}{10.3}&         14.3 &       35.3 &        32.5 &  34.1       &  34.1    &               &         -2 &  \cellcolor{black!15}{6.0} &         7.7&       8.7 &         9.0 &        9.7 &            9.0 \\
        -4 &         13.5 &        \cellcolor{black!15}{13.3} &         16.6 &       20.2 &         21.0 &    21.3     &                &         -4 &         12.3 &   \cellcolor{black!15}{13.3} & 16.3   &        22.3 &         26.6 &                26.4 \\

        -6 &         19.5 &         16.0 &  \cellcolor{black!15}{14}&    13.5 &         13.5 &         13.5 &                    &         -6 &         21.6 &   19.8 &        \cellcolor{black!15}{14.7} &         17.7 &      16.7 &    16.7 \\
        -8 &         25.8 &         18.4&         12.0 & \cellcolor{black!15}{10.5}&         10.5 &         10.5 &                  &         -8 &         40.4 &         34.2 &         18.0 &      \cellcolor{black!15}{14.0} &         13.5 &  13.5  \\
\end{tabular}  
\end{center}
\caption{Mixed Constraints {\tt MC-Pb1}: Average number of {\sc gmres} iterations using \ascp
  with $h=2^{-4}$ and varying $\nu$ and $\epsilon$ ($\log_{10}$ values of $\nu,\epsilon$).} \label{MC_eps_nu}
\vskip 5pt	
\scriptsize
\begin{center}
\begin{tabular}{c|cccccc c c|cccccc}
\multicolumn{ 7}{c}{ $\beta_1=10$} & & \multicolumn{ 7}{c}{ $\beta_1=100$} \\
  $\epsilon$         &  -1 &  -2 &  -3 &  -4 &    -8 &                          $-\infty$&        &         $\epsilon$         &  -1 &  -2 &  -3 &  -4 &    -8 &  $-\infty$   \\
  $\nu$         &     &     &     &     &       &                      &         &     $\nu$         &   &   &   &   &     &     \\
	 \cmidrule{1-7} 	 \cmidrule{9-15}  
        -2 &         \cellcolor{black!15}{22.0}&        32.1 &       60.7 &        86.2 &    91*     &  93*    &               &         -2 &  \cellcolor{black!15}{14.0} &         17.3&       19.2 &   20.0 & 22.6&  21.3 \\
        -4 &         27.8 &        \cellcolor{black!15}{27.0} &         34.4 &       42.2 &      44.1&        44.8   &                &         -4 &        27.0 &   \cellcolor{black!15}{28.0} & 33.7   &        44.2 &    56.0 &   55.2 \\
        -6 &         45.3 &        33.2 &  \cellcolor{black!15}{27.5}&    27.5 &       21.5 &      27.5 &       &                     -6 &        49.4 &   41.4 &        \cellcolor{black!15}{29.7} &         36.0 &   34.7 &   34.7 \\
        -8 &         65.7 &         39.8&        24.5 & \cellcolor{black!15}{21.5}&     21.5 &    21.5 &      &                    -8 &         93.6 &         71.9 &       36.3 &      \cellcolor{black!15}{28.5} &      27.5 &  27.5 \\
\\
        \end{tabular}  
\end{center}
{\footnotesize * 6 pre/post smoothing steps set in {\sc hsl-mi20}}
 \caption{Mixed Constraints {\tt MC-Pb1}: Average number of {\sc minres} iterations using \asbd
  with $h=2^{-4}$ and varying $\nu$ and $\epsilon$ ($\log_{10}$ values of $\nu,\epsilon$).} \label{MC_eps_nu_bdf}
\end{table}

 \begin{figure}[htb]
   \centering
       \includegraphics[width=2.5in,height=2.2in]{./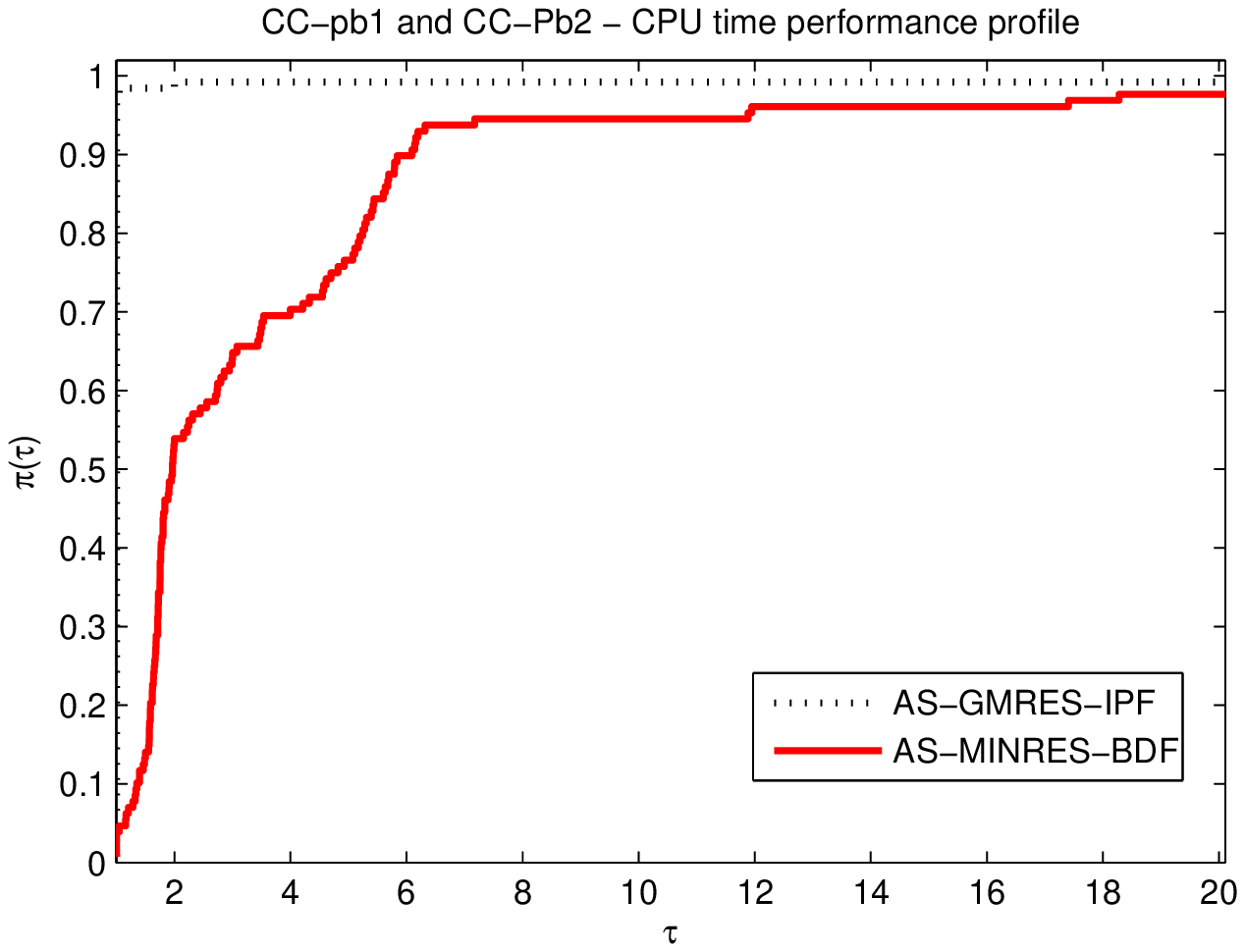}    
       \includegraphics[width=2.5in,height=2.2in]{./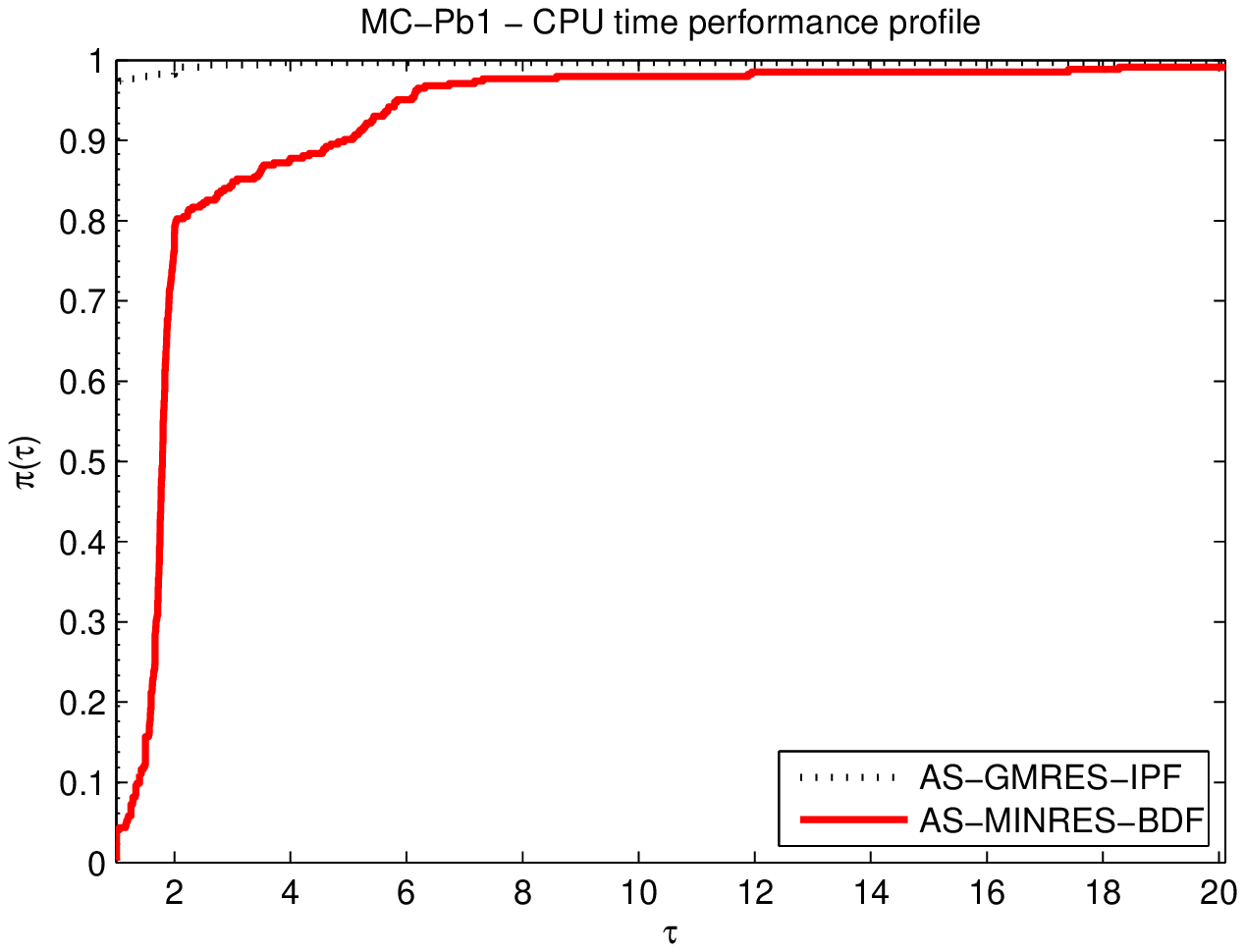}  
 \caption{Total CPU time performance profile for \ascp and {\sc as-minres-bdf}. Left: {\tt CC-Pb1} and {\tt CC-Pb2}.
 Right: {\tt MC-Pb1}. \label{perfprof_cpfbdf}}
 \end{figure}

{\color{black}
The dependence on $\epsilon$ and the mutual influence of $\epsilon$ and $\nu$
deserve deeper exploration.}
In Tables \ref{MC_eps_nu} and \ref{MC_eps_nu_bdf} 
we report the average number of inner iterations for 
$h=2^{-4}$ obtained with \ascp and {\sc as-minres-bdf}, resp.,
as $\nu$ and $\epsilon$ vary. 

We observe that for $\beta=10$ the average number of inner iterations
becomes large when $\epsilon$ is small
and $\nu$ is large (top right corner) whereas for
$\beta=100$ the increase in iteration number is more evident 
in the opposite setting (bottom left corner).
Overall, the variation of the reported values is quite modest and
smallest values are located on the diagonal of the table (shaded cells), i.e. when $\nu=\epsilon^2$.
We recall that $\nu=\epsilon^2$ corresponds to $\gamma_1=\gamma_2=\frac 1 2$ 
in the block $L_1$ of the Schur approximation (\ref{eqn:shat_gen}),
so that Proposition \ref{prop:gamma12} holds (see Remark \ref{rem:nueps2}).
We also notice that the variation in the number of iterations is significantly
less pronounced for the indefinite preconditioner than for the
block diagonal preconditioner. In particular, for a fixed $\nu$, the
average number of iterations for \ascp varies very mildly. More significant
variations for fixed $\nu$ are visible for {\sc as-minres-bdf}, see Table \ref{MC_eps_nu_bdf}.
Moreover, we observe that the behavior of the proposed preconditioner
does not deteriorate for $\epsilon \rightarrow 0$ and, in particular, fully
satisfying results are obtained for $\epsilon =0$, i.e. in the solution of State Constrained problems.

We point out that similar digits were observed when using
a direct solver (not reported here) in place of {\sc hsl-mi20} within the preconditioners.
Therefore, the different performance as the parameters deviate from
$\nu=\epsilon^2$ is not due to the preconditioner inexactness, but
rather, to the different quality of the (exact) preconditioner itself.
{The only exception is given by the two runs marked with the symbol `*' in 
Table~\ref{MC_eps_nu_bdf}, for which a lower average number of {\sc minres}
iterations was observed when using a direct solver in place of {\sc hsl-mi20}.}

}

\end{example}

\subsubsection{The inexact active-set Newton method for CC problems} \label{exp_inex}

Performing the experiments on problems with CC constraints (\ref{pb_cc}),  we observed 
different trends in the nonlinear iteration progress varying the parameters 
$\nu$ and $\beta$, see e.g. the values of {\sc nli} in Table \ref{tabcpf_pb2}.
To clarify this issue,  we plot in Figure \ref{convhist} the convergence history of \ascp 
on {\tt CC-Pb1} with mesh size  $h=2^{-4}$ varying $\beta_1\in \{0,10,100,1000\}$ and 
setting $\nu = 10^{-2}$ in the left plot and $\nu = 10^{-6}$ in the right plot. 
\begin{figure}[htb]
\centering
\includegraphics[width=2.5in,height=2.1in]{./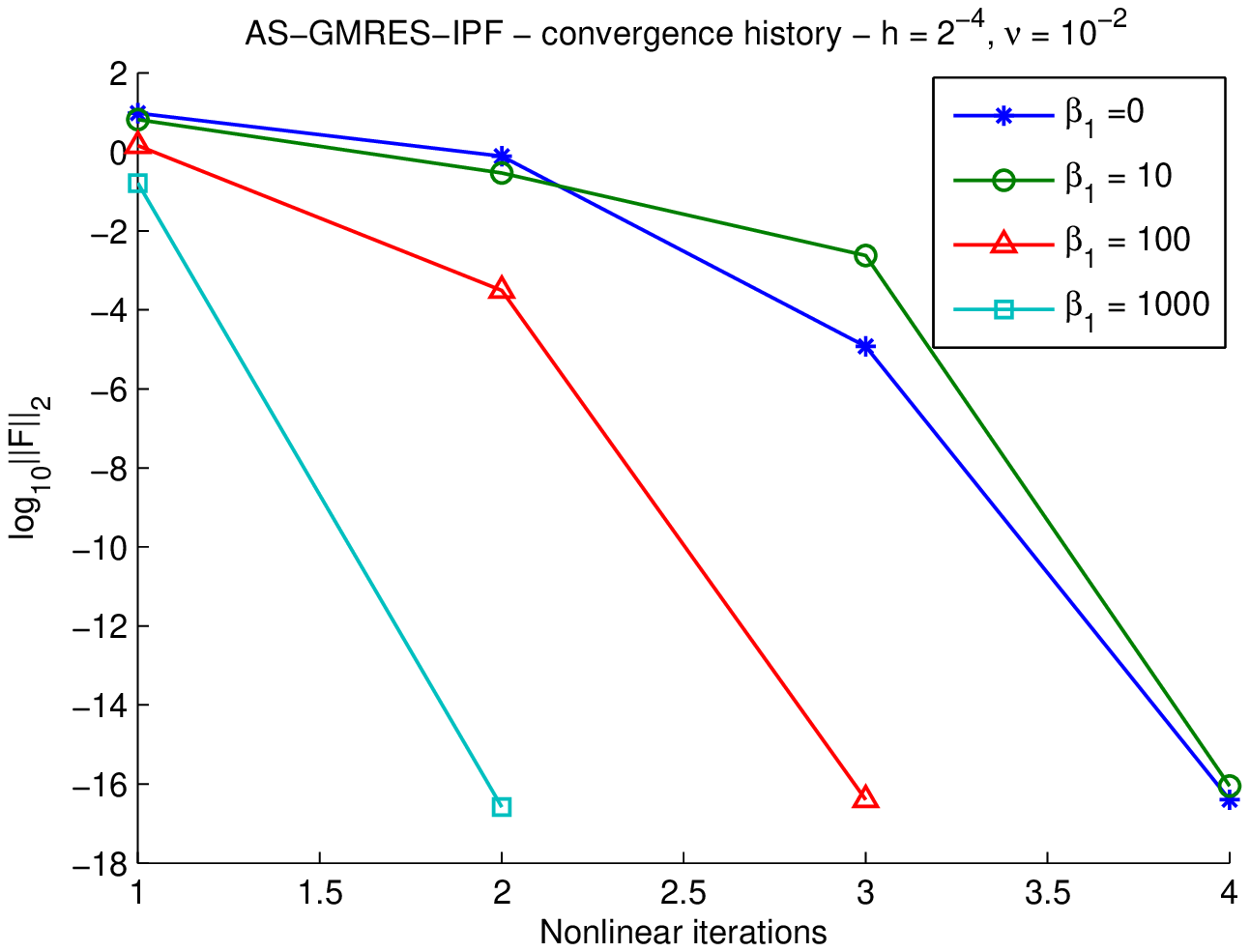}
\includegraphics[width=2.5in,height=2.1in]{./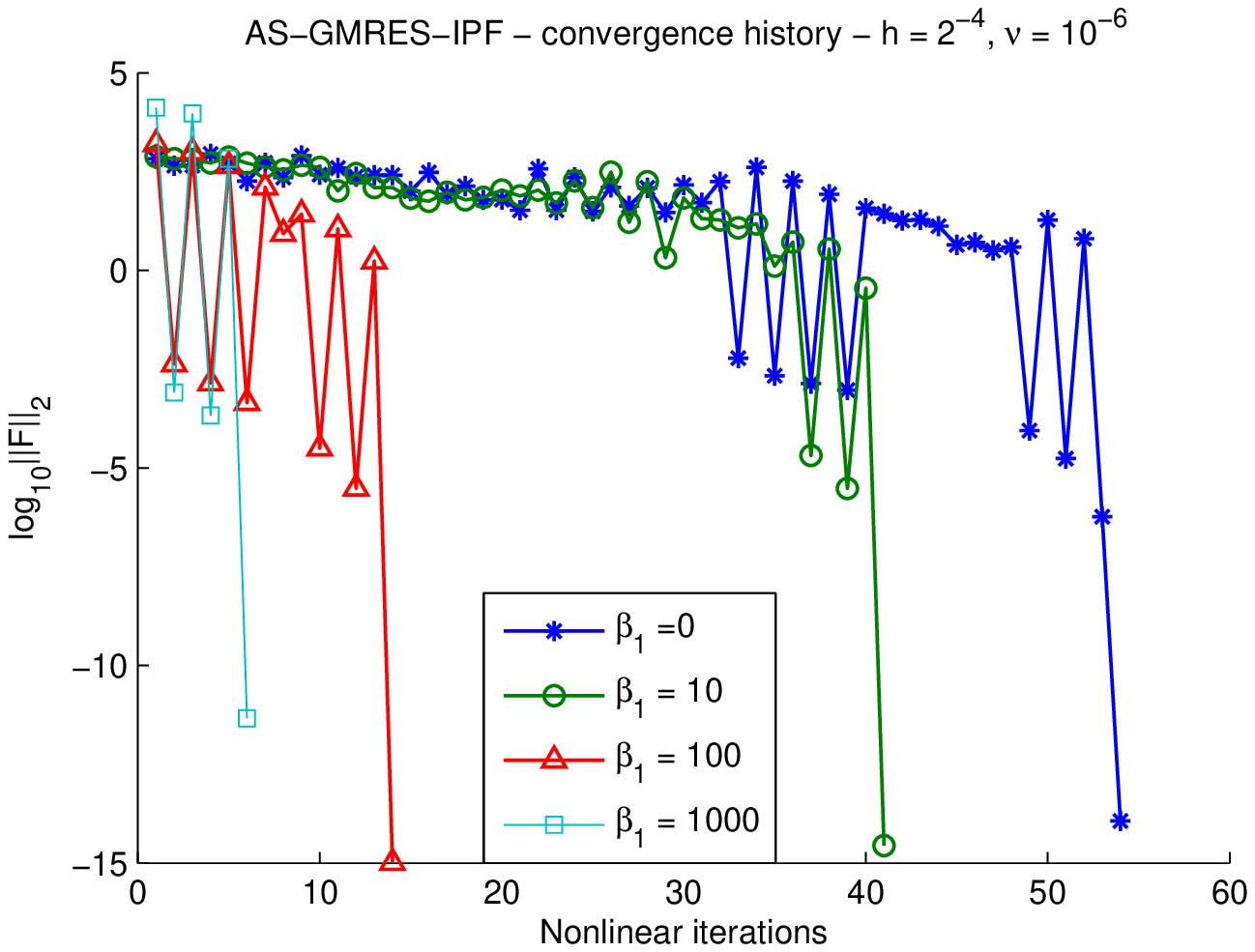}
\caption{Convergence history of  \ascp for the {\tt CC-pb1} with $h=2^{-4}$. Left: $\nu = 10^{-2}$.
Right: $\nu = 10^{-6}$. 
\label{convhist}}
\end{figure}

Looking at each plot we note that the number of nonlinear iterations  decreases as $\beta$ becomes larger;
moreover, comparing the two plots, we observe an increase
of Newton steps for a smaller $\nu$.
More interestingly, the right plot in Figure \ref{convhist} shows 
a long stagnation phase in the nonlinear process before reaching the
local area of fast Newton convergence. 
In this first phase, away from a solution, choosing an $\eta_k$ too small
(as in (\ref{ex_crit1})) can lead to oversolving the Newton equation (\ref{new_eq}): the corresponding
step may result in little or no progress toward a solution, while involving pointless
expense.

We therefore combined the active-set method with the 
inexact adaptive choice (\ref{in_crit}). 
We report in Table \ref{tab_inex} the results of \ascp using  the
adaptive value $\eta_k = \eta_k^I$ in (\ref{in_crit})
on problem {\tt CC-Pb1} with $h\in \{2^{-4},2^{-5}\}$,
$\beta_1 \in \{0,10,100,1000\}$ and $\nu\in\{10^{-2},10^{-4},10^{-6},10^{-8}\}$.

 \begin{table}[htb]
\begin{center}
\footnotesize
\begin{tabular}{|c|c|rr|rr|rr|rr|}
\hline
                    &            & \multicolumn{ 2}{c}{$\nu=10^{-2}$} & \multicolumn{ 2}{|c}{$\nu=10^{-4}$} & \multicolumn{ 2}{|c}{$\nu=10^{-6}$} & \multicolumn{ 2}{|c|}{$\nu=10^{-8}$} \\

   $\beta_1$ &        $p$ &    {\sc li}({\sc nli}) &       {\sc tcpu} &    {\sc li}({\sc nli}) &       {\sc tcpu} &    {\sc li}({\sc nli}) &       {\sc tcpu} &    {\sc li}({\sc nli}) &       {\sc tcpu} \\
\hline
     0 &   $4$ &    3.2(5) &       3.2 &    4.5(18) &      15.9 &  7.0(60) &   77.1 &   13.1(131) &  281.9 \\
       &   $5$ &    4.0(6) &       59.1 &    2.7(16)&     102.0 &  3.5(92) &  646.4&   -  &  - \\
			\hline
    10 &   $4$ &    3.5(4) &     3.2 &   3.2(14)  &11.1 &   5.2(60) & 65.9 &  112.7(101) &      233.9 \\
       &   $5$ &    4.0(5) &      39.6 &   2.9(15) & 100.7 &  3.0(53) &  377.7* &   -  &  - \\
\hline
   100 &   $4$ &       3.0(3) &    1.8 &   3.3(6) &   4.1 &   11.6(14) &   30.4 & 10.6(51) & 100.5 \\
       &   $5$ &    4.3(3) &      25.4 &   3.1(6) & 42.1 &  2.9(27) &  332.4* &   -  &  - \\

\hline
      1000 &   $4$ &      2.5(2) & 1.1 &  3.3(3) & 2.1 &  7.5(6) & 8.5 & 20.4(10) &   45.1 \\
       	   &   $5$ &    2.5(2) &    18.1 &  3.0(3) & 31.6 &  2.5(7) &  67.3 &   8.1(18)  &  924.1 \\

\hline
\end{tabular}  \end{center}
\caption{\ascp on {\tt CC-Pb1} for $h=2^{-p} \in\{2^{-4},2^{-5}\}$ and a variety of values for $\nu$ and $\beta$. The symbol `*' denotes runs where an  {\sc hsl-mi20} warning occurred.} \label{tab_inex}
\end{table}

Let us compare values in Table \ref{tab_inex} with the corresponding values in Table~\ref{tabcpf_pb2} (top table)
obtained with $\eta_k^E$. The average number of linear iterations is smaller in 
Table~\ref{tab_inex}  than in Table \ref{tabcpf_pb2} while the number of nonlinear iterations
is larger in 15 over 29 successful runs. Overall, the saving in number of inner iterations
of \ascp with $\eta_k^I$ makes it faster than \ascp with $\eta_k^E$ in all runs.
Two extra failures 
occur when $\eta_k^I$ is used in the limit case $\nu=10^{-8}$.

Summarizing, the inexact strategy is both cheaper and more effective in solving problem (\ref{NL}),
especially for  $\nu \in\{ 10^{-4}, 10^{-6}\}$ and $\beta_1\le10$, that is values for which the stagnation phase is longer.
Note that in particular, a less stringent inner accuracy allows a fast solution also
in the limit case $\beta_1 = 1000$.

\section{Conclusions}\label{conclusions}
We have proposed two classes of preconditioners (a positive definite one
and an indefinite one) for efficiently solving problem (\ref{pb_gen}) by means of
an active-set Newton method. Both acceleration strategies rely on a new effective
approximation to the Schur complement of the Jacobian matrix, for which
spectral estimates are provided.

A large set of numerical experiments shows the great potential of these
preconditioners for a large range of all problem parameters. As opposed
to the current literature, we cope with the indefiniteness of the problem by
appropriately choosing the structured preconditioner, 
and we include active set information explicitly in the preconditioning blocks to
exploit this information at later stages.
Therefore, the preconditioner adapts dynamically with the modification of the
active sets.
This procedure allowed us to devise a general and simple to implement
acceleration strategy, that can be employed either within {\sc minres} (in the
block diagonal form) or within {\sc gmres} (in the indefinite factorized form).
The latter formulation outperforms {\sc minres}  in all test cases, and
shows significantly lower sensitivity to the extreme values of the parameters.
In general, memory requirements of {\sc gmres} remain modest, as the number of iterations
stays quite small throughout the nonlinear process.
For the smallest values of $\nu$, however, the number of
{\sc gmres} iterations may make its memory requirements undesirably high. In this
case, a short-term recurrence such as the symmetric version of {\sc qmr}
could be considered as an alternative; see, e.g., \cite{PerugiaSimoncini00}
for a discussion and related numerical experiments.
{We also mention that a dimension reduction could be employed in
the original system (\ref{new_eq}). This strategy is discussed in
\cite{Simoncini.12} in the case when no bound constraints are imposed, and it could be
naturally generalized to our setting.}

Although some of the preconditioner blocks need to be recomputed at each
Newton iteration, this cost does not seem to penalize the overall
performance of the preconditioned solver. Numerical comparisons with state-of-the-art
methods available in the literature support these claims.

Finally, we mention that more general regularization terms could be
considered for the cost functionals, for instance, by enforcing 
sparsity constraints, see, e.g., \cite{Stadler09}. We aim to address this important
aspect in future research.

\section*{Acknowledgements} We would like to thank U. Langer, J. Pearson, M. Stoll, A. Wathen and W. Zulehner 
for helpful discussions on the topic of this paper. 
{\color{black} Finally, we aknowledge the insightful remarks of two anonymous referees.}

\bibliography{mybiblio}

\begin{thebibliography}{10}

\bibitem{Bergounioux1999}
{\sc M.~Bergounioux, K.~Ito, and K.~Kunisch}, {\em {Primal-dual strategy for
  constrained optimal control problems}}, SIAM Journal on Control and
  Optimization, 37 (1999), pp.~1176--1194.

\bibitem{Borzi2007}
{\sc A.~Borz\`{\i}}, {\em {Smoothers for control- and state-constrained optimal
  control problems}}, Computing and Visualization in Science, 11 (2007),
  pp.~59--66.

\bibitem{HSL-MI20}
{\sc J.~Boyle, M.~Mihajlovi\'{c}, and J.~Scott}, {\em {H}{S}{L}\_{M}{I}20: An
  efficient {AMG} preconditioner for finite element problems in 3{D}},
  International Journal for Numerical Methods in Engineering, 82 (2010),
  pp.~64--98.

\bibitem{BramblePasciak1988}
{\sc J.~H. Bramble and J.E. Pasciak}, {\em A preconditioning technique for
  indefinite systems resulting from mixed approximations of elliptic problems},
  Mathematics of Computation, 50 (1988), pp.~pp. 1--17.

\bibitem{Casas1986}
{\sc E.~Casas}, {\em Control of an elliptic problem with pointwise state
  constraints}, SIAM Journal on Control and Optimization, 24 (1986),
  pp.~1309--1318.

\bibitem{Clarke1983}
{\sc F.H. Clarke}, {\em {Optimization and Nonsmooth Analysis}}, Wiley New York,
  1983.

\bibitem{DApuzzo10}
{\sc M.~D'Apuzzo, V.~De~Simone, and D.~di~Serafino}, {\em On mutual impact of
  numerical linear algebra and large-scale optimization with focus on interior
  point methods}, Computational Optimization and Applications, 45 (2010),
  pp.~283--310.

\bibitem{DolanMore02}
{\sc E.D. Dolan and J.J. Mor\'e}, {\em Benchmarking optimization software with
  performance profiles}, Mathematical Programming, 91 (2002), pp.~201--213.

\bibitem{ew2}
{\sc S.~C. Eisenstat and H.~F. Walker}, {\em Choosing the forcing term in an
  inexact {N}ewton method}, SIAM Journal on Scientific Computing, 17 (1996),
  pp.~16--32.

\bibitem{Elman.Ramage.Silvester.07}
{\sc H.C. Elman, A.~Ramage, and D.J. Silvester}, {\em {IFISS}: {A} {M}atlab
  toolbox for modelling incompressible flow}, ACM Transactions on Mathematical
  Software, 33 (2007).

\bibitem{Elman2005}
{\sc H.C. Elman, D.J. Silvester, and A.J. Wathen}, {\em Finite elements and
  fast iterative solvers: with applications in incompressible fluid dynamics},
  Numerical Mathematics and Scientific Computation, Oxford University Press,
  New York, 2005.

\bibitem{Fischer1998}
{\sc B.~Fischer, A.~Ramage, D.J. Silvester, and A.J. Wathen}, {\em Minimum
  residual methods for augmented systems}, BIT, 38 (1998), pp.~527--543.

\bibitem{Golub1996}
{\sc G.~Golub, D.~Silvester, and A.J. Wathen}, {\em Diagonal dominance and
  positive definiteness of upwind approximations for advection diffusion
  problems}, in Numerical analysis, World Sci. Publ., River Edge, NJ, 1996,
  pp.~125--131.

\bibitem{HerzogSachs2010}
{\sc R.~Herzog and E.~Sachs}, {\em Preconditioned {C}onjugate {G}radient method
  for optimal control problems with control and state constraints}, SIAM
  Journal on Matrix Analysis and Applications, 31 (2010), pp.~2291--2317.

\bibitem{Hintermuller2002}
{\sc M.~Hinterm\"{u}ller, K.~Ito, and K.~Kunisch}, {\em The {P}rimal-{D}ual
  {A}ctive {S}et strategy as a semismooth {N}ewton method}, SIAM Journal on
  Optimization, 13 (2002), pp.~865--888.

\bibitem{ItoKunish2003}
{\sc K.~Ito and K.~Kunisch}, {\em Semi-smooth {N}ewton methods for
  state-constrained optimal control problems}, Systems \& Control Letters, 50
  (2003), pp.~221 -- 228.

\bibitem{Kanzow2004}
{\sc Ch. Kanzow}, {\em {Inexact semismooth {N}ewton methods for large-scale
  complementarity problems}}, Optimization Methods and Software, 19 (2004),
  pp.~309--325.

\bibitem{Kunisch2002}
{\sc K.~Kunisch and A.~R\"{o}sch}, {\em {Primal-dual active set strategy for a
  general class of constrained optimal control problems}}, SIAM Journal on
  Optimization, 13 (2002), pp.~321--334.

\bibitem{Matlab}
{\sc The MathWorks, Inc.}, {\em M{A}{T}{L}{A}{B} 7}, {R}2013b~ed., 2013.

\bibitem{Meyer2007}
{\sc Ch. Meyer, U.~Pr\"{u}fert, and F.~Tr\"{o}ltzsch}, {\em On two numerical
  methods for state-constrained elliptic control problems}, Optimization
  Methods and Software, 22 (2007), pp.~871--899.

\bibitem{Notay2012}
{\sc Y.~Notay}, {\em Aggregation-based algebraic multigrid for
  convection-diffusion equations}, SIAM Journal on Scientific Computing, 34
  (2012), pp.~A2288--A2316.

\bibitem{AGMG}
\leavevmode\vrule height 2pt depth -1.6pt width 23pt, {\em {AGMG} software and
  documentation}, http://homepages.ulb.ac.be/{$\sim$}ynotay/AGMG,  (2014).

\bibitem{StollPearsonWathen2011}
{\sc J.~Pearson, M.~Stoll, and A.J. Wathen}, {\em {Preconditioners for state
  constrained optimal control problems with {M}oreau-{Y}osida penalty
  function}}, Numerical Linear Algebra with Applications,  (2011), pp.~81--97.

\bibitem{PearsonWathen2012}
{\sc J.W. Pearson and A.J. Wathen}, {\em A new approximation of the {S}chur
  complement in preconditioners for {PDE}-constrained optimization}, Numerical
  Linear Algebra with Applications, 19 (2012), pp.~816--829.

\bibitem{PearsonWathen2013}
\leavevmode\vrule height 2pt depth -1.6pt width 23pt, {\em Fast iterative
  solvers for convection-diffusion control problems}, Electronic Transactions
  on Numerical Analysis, 40 (2013), pp.~294--310.

\bibitem{PerugiaSimoncini00}
{\sc I.~Perugia and V.~Simoncini}, {\em Block-diagonal and indefinite symmetric
  preconditioners for mixed finite element formulations}, Numerical Linear
  Algebra with Applications, 7 (2000), pp.~585--616.

\bibitem{Porcelli13}
{\sc M.~Porcelli}, {\em On the convergence of an inexact {G}auss-{N}ewton
  trust-region method for nonlinear least-squares problems with simple bounds},
  Optimization Letters, 7 (2013), pp.~447--465.

\bibitem{Saad-book}
{\sc Y.~Saad}, {\em Iterative methods for sparse linear systems}, {SIAM,
  Society for Industrial and Applied Mathematics}, 2nd~ed., 2003.

\bibitem{GMRES}
{\sc Y.~Saad and M.~H. Schultz}, {\em {G{M}{R}{E}{S}}: A generalized minimal
  residual algorithm for solving nonsymmetric linear systems}, SIAM Journal on
  Scientific and Statistical Computing, 7 (July 1986), pp.~856--869.

\bibitem{SchoberlZulehner2007}
{\sc J.~Sch\"{o}berl and W.~Zulehner}, {\em {Symmetric indefinite
  preconditioners for saddle point problems with applications to
  {PDE}-constrained optimization problems}}, SIAM Journal on Matrix Analysis
  and Applications, 29 (2007), pp.~752--773.

\bibitem{Sesana.Simoncini.13}
{\sc D.~Sesana and V.~Simoncini}, {\em Spectral analysis of inexact constraint
  preconditioning for symmetric saddle point matrices}, Linear Algebra and its
  Applications, 438 (2013), pp.~2683--2700.

\bibitem{Simoncini.12}
{\sc V.~Simoncini}, {\em Reduced order solution of structured linear systems
  arising in certain {PDE}-constrained optimization problems}, Computational
  Optimization and Applications,  (2012), pp.~591--617.

\bibitem{Stadler09}
{\sc G.~Stadler}, {\em Elliptic optimal control problems with {L}1-control cost
  and applications for the placement of control devices}, Computational
  Optimization and Applications, 44 (2009), pp.~159--181.

\bibitem{StollWathen2012}
{\sc M.~Stoll and A.J. Wathen}, {\em {Preconditioning for partial differential
  equation constrained optimization with control constraints}}, Numerical
  Linear Algebra with Applications,  (2012), pp.~53--71.

\bibitem{Stynes.13}
{\sc M.~Stynes}, {\em Numerical methods for convection-diffusion problems or
  the 30 years war}, arXiv:1306.5172, Department of Mathematics, National
  University of Ireland,  (2013).

\bibitem{Wathen2008}
{\sc A.J. Wathen and T.~Rees}, {\em Chebyshev semi-iteration in preconditioning
  for problems including the mass matrix.}, ETNA. Electronic Transactions on
  Numerical Analysis, 34 (2008), pp.~125--135.

\end{thebibliography}

\section*{Appendix}
In this appendix we collect some of the technical proofs (subscript $k$ is omitted).

\vskip 0.1in
{\it Proof of Lemma \ref{lemma:F}.} 
From $F+F^T\succeq0$ it also follows that $F+I$ is nonsingular.

i) We consider the eigenvalue problem $(F+I)^{-1}(F-I)(F-I)^T (F+I)^{-T} x = \theta x$ with $\theta\ge 0$, or, equivalently,
$(F-I)(F-I)^T y =\theta (F+I) (F+I)^T y$ with $y=(F+I)^{-T} x$. The largest eigenvalue coincides with
$\|(F+I)^{-1}(F-I)\|^2$.
 We have
$(F-I)(F-I)^T = FF^T + I -F - F^T$ and $(F+I) (F+I)^T = FF^T + I +F + F^T$. Substituting and
rearranging terms gives
$$
(1-\theta) (FF^T+I) y = (\theta +1)(F+F^T) y .
$$
We multiply from the left by $y^T$. Since $FF^T+I\succ0$, $F+F^T\succeq 0$ and $\theta+1>0$, it must be that
$1-\theta \ge0$, that is $\theta \le 1$.

ii) We proceed in a similar way. Let us now consider
 $(F+I)^{-1}(F+F^T)(F+I)^{-T} x = \theta x$, with $\theta >0$, which is equivalent to
$(F+F^T) y = \theta (F+I)(F+I)^T y$, with $y=(F+I)^{-T} x$. Therefore,
$(1-\theta) (F+F^T) y = \theta(FF^T+I) y$.  We premultiply by $y^T$ and rearrange to obtain
$$
\frac{1 - \theta}{\theta} = \frac{y^T (FF^T+I) y}{y^T (F + F^T) y}.
$$
From the relation $(F - I)(F - I)^T \succeq 0 $ it follows that $\displaystyle \frac{y^T (FF^T+I) y}{y^T (F + F^T) y} \geq 1$. Thus, $\displaystyle \frac{1 - \theta}{\theta} \geq 1$ which implies $\theta \leq \displaystyle \frac{1}{2}$.
\endproof

\vskip 0.2in

{\it Proof of Proposition \ref{prop:upperestimate}.} Let $F=\sqrt{\nu} M^{-\frac 1 2} L M^{-\frac 1 2}$.

i) For $\alpha_u=1$ and $\alpha_y=0$ we have $\gamma_1=0$ and $\gamma_2=1$,
so that
$M^{-\frac 1 2} {\mathbb S} M^{-\frac 1 2} = FF^T + (I-\Pi)$,
and
$$
M^{-\frac 1 2} \widehat {\mathbb S} M^{-\frac 1 2} =
(F+(I-\Pi)) (F+(I-\Pi))^T ,
$$
where we used the fact that $(I-\Pi)^{\frac 1 2} = (I-\Pi)$.
From $M^{-\frac 1 2} {\mathbb S} M^{-\frac 1 2}  x = \lambda M^{-\frac 1 2} \widehat {\mathbb S} M^{-\frac 1 2} x$
we obtain for $y=M^{\frac 1 2}x$
\begin{eqnarray}\label{eqn:Feig}
(F+(I-\Pi))^{-1} (FF^T + (I-\Pi)) (F+(I-\Pi))^{-T} y = \lambda y .
\end{eqnarray}
Since $F$ is nonsingular, we have
\begin{eqnarray*}
&&(F+(I-\Pi))^{-1} (FF^T + (I-\Pi)) (F+(I-\Pi))^{-T}  \\
&\qquad & =
(F+(I-\Pi))^{-1}F (I + F^{-1}(I-\Pi)(I-\Pi)F^{-T} ) F^T (F+(I-\Pi))^{-T}  \\
&\qquad & = (I+F^{-1}(I-\Pi))^{-1} (I + F^{-1}(I-\Pi)(I-\Pi)F^{-T} )  (I+F^{-1}(I-\Pi))^{-T}  \\
&\qquad& =: (I+Z)^{-1} (I+ZZ^T)(I+Z)^{-T} ,
\end{eqnarray*}
with $Z=F^{-1}(I-\Pi)$. Therefore, from (\ref{eqn:Feig}) it follows
\begin{eqnarray}
\lambda  &\le& 
 \|(I+Z)^{-1} (I+ZZ^T)(I+Z)^{-T}\|  
\le  \|(I+Z)^{-1}\|^2 + \|(I+Z)^{-1}Z\|^2  \nonumber\\
&=& \|(I+Z)^{-1}\|^2 + \|I- (I+Z)^{-1}\|^2 \nonumber \\
&\le& \|(I+Z)^{-1}\|^2 + (1 + \|(I+Z)^{-1}\|)^2.\label{eqn:lambda}
\end{eqnarray}
We then recall that
$Z = F^{-1}(I-\Pi) = 
 \frac 1 {\sqrt{\nu}} M^{\frac 1 2} L^{-1} M^{\frac 1 2} (I-\Pi)$,
so that
\begin{eqnarray*}
\|(I+Z)^{-1}\| &=& \|(I+ \frac {1} {\sqrt{\nu}} M^{\frac 1 2} L^{-1} M^{\frac 1 2} (I-\Pi))^{-1} \| 
\\
& = & 
\|M^{\frac 1 2} \left (\sqrt{\nu} L  +M (I-\Pi)\right )^{-1} \sqrt{\nu} L M^{-\frac 1 2} \|  .
\end{eqnarray*}

To analyze the behavior for $\nu\to 0$, let us suppose that $L + L^T \succ 0$, and
write $Z= \frac 1 {\sqrt{\nu}} \widetilde F^{-1}(I-\Pi)$; 
without loss of generality also assume that $I-\Pi = {\rm blkdiag}(I_\ell,0)$.
The eigendecomposition of $\widetilde F^{-1}(I-\Pi)$ is given by\footnote{In the 
unlikely case of a Jordan decomposition,
the proof proceeds with the maximum over norms of 
Jordan blocks inverses, which leads to the same final result.}
$\widetilde F^{-1}(I-\Pi) = X \Lambda X^{-1}$ where $\Lambda={\rm diag}(\lambda_i)$
and $\lambda_i \in {\rm spec}((\widetilde F^{-1})_{11}) \cup \{0\}$. Here $(\widetilde F^{-1})_{11}$ is
the top left $\ell\times \ell$ block of $\widetilde F^{-1}$.
Note that 
all eigenvalues of $(\widetilde F^{-1})_{11}$
have strictly positive real part, thanks to the condition $L+L^T \succ 0$. 
Therefore 
{\small
\begin{eqnarray*}
\|(I+Z)^{-1}\| & = &
\|X (I + \frac 1 {\sqrt{\nu}} \Lambda)^{-1} X^{-1} \| 
\le
{\rm cond}(X) 
\max \left\{ \frac 1 {\displaystyle\min_{\lambda\in {\rm spec}((\widetilde F^{-1})_{11})} |1+\lambda/\sqrt{\nu}|}, 1\right\} .
\end{eqnarray*}
}
We thus have
$$
\max \left\{ \frac 1 {\displaystyle \min_{\lambda\in {\rm spec}((\widetilde F^{-1})_{11})} |1+\lambda/\sqrt{\nu}|}, 1\right\} \to 1 
\qquad {\rm for } \qquad \nu \to 0 ,
$$
so that $\|(I+Z)^{-1}\| \le \eta\, {\rm cond}(X)$ with $\eta\to 1$ for $\nu \to 0$.

ii) For $\alpha_u=0$ and $\alpha_y=1$ we have $\gamma_1=1$ and $\gamma_2=0$,
so that
$M^{-\frac 1 2} {\mathbb S} M^{-\frac 1 2} = F(I-\Pi)F^T + I$,
and
$$
M^{-\frac 1 2} \widehat {\mathbb S} M^{-\frac 1 2} =
(F(I-\Pi)+I) (F(I-\Pi)+I)^T .
$$
As before, setting this time $Z= F(I-\Pi)$ we obtain
the bounds (\ref{eqn:lambda}) for $\lambda$ 
with $\|(I+Z)^{-1}\| = \|(I+\sqrt{\nu}M^{-\frac 1 2}LM^{-\frac 1 2}(I-\Pi))^{-1}\|$.
Finally, it is apparent from the above expression that $\|(I+Z)^{-1}\| \to 1$ as $\nu \to 0$.
\endproof

\end{document}